\documentclass[letterpaper,twoside,12pt]{report}
\pdfoutput=1


\usepackage[ngerman]{babel}
\usepackage[T1]{fontenc}
\usepackage{ae}
\usepackage{aecompl}
\usepackage[latin1]{inputenc}

\usepackage{amsmath}
\usepackage{amsthm}
\usepackage{amsfonts}
\usepackage{amssymb}
\usepackage[all]{xy}

\usepackage{yhmath}
\usepackage[scale=0.75, heightrounded]{geometry}
\usepackage{fancyhdr} 
\usepackage{mathrsfs}
\DeclareMathAlphabet{\mathpzc}{OT1}{pzc}{m}{it}

\begin{document} 
\allowdisplaybreaks
\sloppy 
\newenvironment{bew}{\begin{proof}[Beweis]}{\end{proof}}
\newtheorem{satz}{Satz}[chapter]
\newtheorem{bem}[satz]{Bemerkung}
\newtheorem{defi}[satz]{Definition}
\newtheorem{lem}[satz]{Lemma} 
\newtheorem*{beh}{Behauptung} 

\pagestyle{empty} 

\begin{titlepage}

\begin{center}

\Large
\textsc{Die Dachabbildung in ganzzahliger \v{C}ech-Homologie}\\

\vspace{5cm}

\textsc{Diplomarbeit\\[0.5\baselineskip]
vorgelegt von\\[0.5\baselineskip]
Denise Nakibo\u glu\\
{\normalsize \textsc{geboren am 30. Mai 1982 in Coburg}}}\\

\vspace{4cm}
\textsc{Juli 2007}\\ 

\vspace{1cm}
\textsc{angefertigt im \\
Mathematischen Institut}\\
\textsc{der}

\textsc{Georg-August-Universität Göttingen}

\end{center}

\end{titlepage}

\newpage
 
\cleardoublepage

\cleardoublepage

\tableofcontents 
\cleardoublepage 

\pagestyle{fancy} 
\setlength{\headheight}{15.1pt}
\lhead[\thepage]{\bfseries\rightmark}
\rhead[\bfseries\leftmark]{\thepage}
\cfoot{\empty}

\setcounter{chapter}{-1}
\chapter{Einleitung}
Dem Wunsch entsprungen, das verallgemeinerte Borsuk-Ulam-Theorem aus \cite{borsuk} beziehungsweise \cite{Hage2006} für fundamentale Korrespondenzen durch Einbeziehung von Orientierung weiter zu verallgemeinern (denn die angegebenen Quellen beschäftigen sich vorrangig mit $\mathbb Z_2$-Koeffizienten), ist das Ziel dieser Arbeit, die Existenz der sogenannten Dachabbildung in \v{C}ech-Homologie mit $\mathbb  Z$-Koeffizienten nachzuweisen.\\ Die Existenz dieser Abbildung ist wesentlich für die Gültigkeit des verallgemeinerten Borsuk-Ulam-Theorems, denn sie hat die nützliche Eigenschaft, unter den passenden Voraussetzungen die Fundamentalklasse einer Mannigfaltigkeit auf "`die Fundamentalklasse"' der zugehörigen "`Giebelmannigfaltigkeit"' abzubilden, worauf im letzten Abschnitt der Arbeit näher eingegangen wird.\\
Es war also nötig, die Giebelkonstruktion $\widetriangle{(X,A)}$, die für ein topologisches Raumpaar $(X,A)$ durch\[
\widetriangle{(X,A)}:=(\widetriangle{X},\widetriangle{A}):=(p_\tau^X(X\times X),p_\tau^X(X\times A)\cup p_\tau^X(A\times X)\cup p_\tau^X(\bigtriangleup_X))\] definiert ist, auf ihre Orientierungseigenschaften zu untersuchen.\\ Hierbei bezeichnet  $p_\tau^X\colon X\times X\rightarrow X\times X/\tau$ die von der koordinatenvertauschenden Involution $\tau\colon X\times X\rightarrow X\times X$ induzierte Projektion.\\
Ebenso benötigt die Definition der Dachabbildung das simpliziale Kreuzprodukt, welches zu Beginn des zweiten Kapitels bezüglich seiner vorzeichenrelevanten Eigenschaften beleuchtet wird. \\
Im Entstehungsprozess der Arbeit stellte sich dann heraus, dass die gewünschte Verallgemeinerung der Dachabbildung nur in geraden Dimensionen möglich ist, genauer gesagt macht sogar die Definition der Dachabbildung nur in geraden Dimensionen Sinn. Die Ursache hierfür ist die Verstrickung der koordinatenvertauschenden Involution in die Definitionen, die in ungeraden Dimensionen "`die Orientierung umkehrt"', was im zweiten Kapitel nachgelesen werden kann.\\
Schließlich war es auch nötig, sich mit \v{C}ech-Homologie zu befassen, denn in dieser soll die Dachabbildung insbesondere existieren. Im ersten Kapitel werden daher die verschiedenen Möglichkeiten, die \v{C}ech-Homologie zu definieren, beschrieben und verglichen. Das Hauptresultat des ersten Kapitels wird für den entscheidenden Existenzbeweis für gerade Dimensionen benötigt.



\chapter{Zwei Varianten der \v{C}ech-Homologie}

Eine Suche nach der Definition von \v{C}ech-Homologie wird möglicherweise zu zwei unterschiedlichen Ergebnissen führen. Das eine, das wohl als klassische Variante zu betrachten und zum Beispiel in \cite{MR0050886} zu finden ist, legt den inversen Limes der Homologiegruppen des Nervs der verschiedenen möglichen offenen Überdeckungen zu Grunde. Andererseits nutzt die beispielsweise in \cite{MR1335915} definierte Variante den inversen Limes der Homologiegruppen von offenen Umgebungen in einem umliegenden euklidischen Umgebungsretrakt. Dieses erste Kapitel soll die Isomorphie dieser beiden Definitionen zeigen.

\section{Limes}

Beide Definitionen der \v{C}ech-Homologie haben den inversen Limes gemein. Der sich anschließende erste Abschnitt des Kapitels soll Grundlegendes zum Thema "`inverser Limes"' zusammenfassen. Er basiert auf der Darstellung in \cite{MR1335915}.
Da ausschließlich vom inversen Limes die Rede sein wird, nennen wir diesen ab jetzt der Einfachheit halber schlicht "`Limes"'.

Einen kontravarianten Funktor $I\colon\Lambda\rightarrow \mathscr{K}$ zwischen Kategorien $\Lambda$ und $\mathscr{K}$ bezeichnen wir als Kofunktor. Eine (natürliche) Transformation $\varphi\colon I\rightarrow J$ zwischen zwei Kofunktoren $I,J\colon\Lambda\rightarrow\mathscr{K}$ ist eine Abbildung, die jedem Objekt $\lambda\in\Lambda$ einen Morphismus $\varphi_{\lambda}$ der Kategorie $\mathscr{K}$ zuordnet, nämlich $\varphi_{\lambda}\colon I\lambda\rightarrow J\lambda$, so dass für alle Morphismen $\alpha\colon\lambda\rightarrow\mu$ in $\Lambda$ folgendes Diagramm kommutiert: \[
\begin{xy}
  \xymatrix{
      I\mu \ar[r]^{\varphi_\mu} \ar[d]_{I\alpha}    &   J\mu \ar[d]^{J\alpha}  \\
      I\lambda \ar[r]_{\varphi_\lambda}             &   J\lambda   \\
  }
\end{xy}
\]
Bezeichnen wir für ein Objekt $K\in\mathscr{K}$ den durch dieses Objekt definierten konstanten Funktor $\Lambda\rightarrow\mathscr{K}$ ebenso mit $K$, so ist eine (natürliche) Transformation $\varphi\colon K\rightarrow I$ durch eine Familie von Morphismen $\left\{ \varphi_\lambda \colon K \rightarrow I \lambda \right\}_{\lambda\in\Lambda}$ gegeben, so dass für alle Morphismen $\alpha\colon\lambda\rightarrow\mu$ aus $\Lambda$ folgendes Diagramm kommutiert:\[
\begin{xy}
  \xymatrix{
                               &   I\mu \ar[dd]^{I\alpha}  \\
      K \ar[ru]^{\varphi_\mu} \ar[rd]_{\varphi_\lambda}  &  \\
                               &   I\alpha   \\
  }
\end{xy}
\] Das heißt, dass $\varphi_\lambda=I\alpha(\varphi_\mu)$ für alle Morphismen $\alpha\colon\lambda \rightarrow\mu$ ist.\\
Eine Transformation $u\colon L\rightarrow I$ mit $L\in\mathscr{K}$ heißt universell, falls es für alle Transformationen $\varphi\colon K\rightarrow I$ einen eindeutigen Morphismus $\psi\colon K\rightarrow L$ gibt, so dass $\varphi=u\psi$ ist, also falls\[
\mathscr{K}(K,L)\cong \text{Transf}(K,I), \qquad \psi\mapsto u\psi
\] gilt. 
\begin{defi}
Sei $I\colon\Lambda\rightarrow\mathscr{K}$ ein Kofunktor zwischen Kategorien $\Lambda$ und $\mathscr{K}$. Existiert eine universelle Transformation $u\colon L\rightarrow I$, so heißt $L$ der Limes von $I$, wofür die Schreibweise $L=lim(I)$ verwendet wird.
\end{defi}

Da der Limes bis auf Isomorphie eindeutig ist, ist seine Definition sinnvoll.

\begin{lem}[Eindeutigkeit des Limes bis auf Isomorphie]\label{eindlim}
Sei $I\colon\Lambda\rightarrow\mathscr{K}$ ein Kofunktor zwischen Kategorien $\Lambda$ und $\mathscr{K}$. Seien weiterhin $u\colon L\rightarrow I$ und $u^{\prime}\colon L^{\prime}\rightarrow I$ zwei universelle Transformationen. Dann gibt es einen eindeutigen Morphismus $\kappa\colon L\rightarrow L^{\prime}$, so dass $u=u^{\prime}\kappa$ und $\kappa\colon L\cong L^{\prime}$.
\end{lem}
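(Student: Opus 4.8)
Der Plan ist, die klassische Argumentation für Objekte mit universeller Eigenschaft zu verwenden, nach der ein solches Objekt bis auf \emph{eindeutige} Isomorphie festgelegt ist. Zunächst würde ich Existenz und Eindeutigkeit von $\kappa$ unmittelbar aus der Universalität von $u^{\prime}\colon L^{\prime}\rightarrow I$ gewinnen: Da $u\colon L\rightarrow I$ selbst eine Transformation ist, liefert die universelle Eigenschaft von $u^{\prime}$ genau einen Morphismus $\kappa\colon L\rightarrow L^{\prime}$ mit $u=u^{\prime}\kappa$. Damit ist die im Satz geforderte Existenz- und Eindeutigkeitsaussage für $\kappa$ bereits erledigt, und es verbleibt nur noch zu zeigen, dass dieses $\kappa$ ein Isomorphismus ist.

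Dazu gehe ich symmetrisch vor: Da nun auch $u\colon L\rightarrow I$ universell ist und $u^{\prime}\colon L^{\prime}\rightarrow I$ eine Transformation, existiert genau ein Morphismus $\kappa^{\prime}\colon L^{\prime}\rightarrow L$ mit $u^{\prime}=u\kappa^{\prime}$. Der entscheidende Schritt ist dann, die Komposition $\kappa^{\prime}\kappa\colon L\rightarrow L$ zu betrachten und unter Ausnutzung der Assoziativität der Komposition in $\mathscr{K}$ die Gleichung
\[
u(\kappa^{\prime}\kappa)=(u\kappa^{\prime})\kappa=u^{\prime}\kappa=u
\]
nachzurechnen. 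Andererseits gilt trivialerweise $u\cdot\mathrm{id}_L=u$. Hier kommt nun die Eindeutigkeitsklausel der universellen Eigenschaft von $u$ ins Spiel, angewandt auf die Transformation $\varphi=u\colon L\rightarrow I$: Da es genau einen Morphismus $\psi\colon L\rightarrow L$ mit $u\psi=u$ gibt, müssen $\kappa^{\prime}\kappa$ und $\mathrm{id}_L$ übereinstimmen, also $\kappa^{\prime}\kappa=\mathrm{id}_L$. Völlig analog folgt $\kappa\kappa^{\prime}=\mathrm{id}_{L^{\prime}}$ aus der Universalität von $u^{\prime}$, womit $\kappa$ als Isomorphismus mit Inversem $\kappa^{\prime}$ nachgewiesen wäre.

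Den eigentlichen Kern — und zugleich die einzige Stelle, an der man aufpassen muss — sehe ich im vorletzten Schritt: Man darf nicht vergessen, dass man die \emph{Eindeutigkeit} des Faktorisierungsmorphismus und nicht bloß seine Existenz benötigt, um aus $u(\kappa^{\prime}\kappa)=u=u\cdot\mathrm{id}_L$ auf $\kappa^{\prime}\kappa=\mathrm{id}_L$ schließen zu dürfen. Rechnerisch ist darüber hinaus nichts zu leisten; alles folgt rein formal aus der definierenden universellen Eigenschaft der beteiligten Transformationen und der Assoziativität der Komposition.
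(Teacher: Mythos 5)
Ihr Beweisvorschlag ist korrekt und folgt im Wesentlichen genau dem Argument der Arbeit: $\kappa$ und $\kappa^{\prime}$ werden aus den beiden Universalit\"atseigenschaften gewonnen, und aus $u=u\kappa^{\prime}\kappa$ bzw. $u^{\prime}=u^{\prime}\kappa\kappa^{\prime}$ folgt mittels der Eindeutigkeitsklausel $\kappa^{\prime}\kappa=\mathrm{id}_L$ und $\kappa\kappa^{\prime}=\mathrm{id}_{L^{\prime}}$. Ihre explizite Betonung, dass hier die \emph{Eindeutigkeit} des Faktorisierungsmorphismus (angewandt auf $\varphi=u$ selbst) der entscheidende Punkt ist, macht den in der Arbeit knapp gehaltenen Schluss sogar noch transparenter.
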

\begin{bew}
Da $u$ universell ist, gibt es ein eindeutiges $\kappa^{\prime}\colon L^{\prime}\rightarrow L$ mit $u^{\prime}=u\kappa^{\prime}$. Aus der Universalität von $u^{\prime}$ bekommen wir genauso $\kappa\colon L\rightarrow L^{\prime}$ mit $u=u^{\prime}\kappa$. Es ist also\[
\begin{array}{lllll}
u  &=& u \kappa^{\prime}\kappa \qquad & \Rightarrow \kappa^{\prime}\kappa=id_L, &\text{ da u universell } \cr
u^{\prime} &=& u^{\prime}\kappa\kappa^{\prime} \qquad	& \Rightarrow \kappa\kappa^{\prime}=id_{L^{\prime}}, &\text{ da $u^{\prime}$ universell.}\cr
\end{array}\]
\end{bew}

In einigen Fällen gibt es Sätze für die Existenz eines Limes und sogar Aussagen darüber, wie dieser konkret aussieht. Später werden folgende Begriffsbildungen und der anschließende Satz von Bedeutung sein:

\begin{defi}[Quasiordnung, gerichtete Menge, inverses System]
Eine Relation $\lambda<\mu$ auf einer Menge $\Lambda$ heißt Quasiordnung, falls sie reflexiv und transitiv ist. Es ist möglich und oft sinnvoll, eine quasi-geordnete Menge $\Lambda$ als Kategorie aufzufassen, wobei die Objekte die Elemente von $\Lambda$ sind und jede Beziehung $,\lambda<\mu`$ ein Morphismus $\lambda\rightarrow\mu$ ist.\\
Eine solche Menge wird gerichtet genannt, falls es für alle Paare $\lambda,\lambda^{\prime}\in\Lambda$ ein $\mu\in\Lambda$ gibt, so dass $\lambda<\mu$ und $\lambda^{\prime}<\mu$ gilt. \\
Für eine quasi-geordnete Menge $\Lambda$ und eine Kategorie $\mathscr{K}$ definiert ein Kofunktor \linebreak[4]$I\colon\Lambda\rightarrow\mathscr{K}$ ein inverses System in $\mathscr{K}$. Für den Morphismus $\lambda<\mu$ in $\Lambda$ bezeichnet $I^\mu_\lambda$ den Morphismus $I^\mu_\lambda:=I(\lambda<\mu)\colon I\mu\rightarrow I\lambda$.   
\end{defi}

Der Limes eines inversen Systems existiert für die für uns wichtigen Kategorien immer und hat eine Darstellung, die später hilfreich im Umgang mit ihm sein wird.

\begin{satz}[Limes eines inversen Systems]\label{invlim}
Sei $I\colon\Lambda\rightarrow\mathscr{K}$ ein inverses System. Hier kann $\mathscr{K}$ beispielsweise die Kategorie der abelschen Gruppen, der Moduln oder der Komplexe sein. Dann existiert der Limes $lim(I)$ immer, und es ist\[
lim(I)=\left\{x=\left\{x_\nu\right\}\in\prod_{\nu\in\Lambda}{I\nu} \mid x_\lambda=I^\mu_\lambda x_\mu \quad \forall (\lambda<\mu)\right\}
\]
\end{satz}

\begin{bew}
Setze \[L:=\left\{x=\left\{x_\nu\right\}\in\prod\limits_{\nu\in\Lambda}I\nu \mid x_\lambda=I^\mu_\lambda x_\mu \quad \forall (\lambda<\mu)\right\}\] und sei $\varphi\colon K\rightarrow I$ eine Transformation für ein Objekt $K\in\mathscr{K}$.\\
Wegen der universellen Eigenschaft des Produktes gibt es einen eindeutigen Morphismus \[
\psi\colon K\rightarrow \prod\limits_{\nu\in\Lambda}{I\nu} \text{ , so dass } \psi(k)=\left\{\varphi_\lambda k\right\}_{\lambda\in\Lambda} \text{ für alle } k\in K.
\]
Für ein Element $\left\{\varphi_\lambda k\right\}_{\lambda\in\Lambda}$ aus dem Bild von $\psi$ gilt: $\varphi_\lambda(k)=I_\lambda^\mu\varphi_\mu(k)$ für alle $\lambda<\mu$, da $\varphi$ eine Transformation ist. Das heißt aber, dass $\psi$ eine Abbildung nach $L$ ist.\\
Sei $u\colon L\rightarrow I$ die Transformation, bei der $u_\lambda\colon L\rightarrow I\lambda $ die Projektion auf die $\lambda-$te Komponente ist. Dann gilt nach Konstruktion $\varphi=u\psi$ und $u$ ist somit universell.
\end{bew}

\section{Ein kofinaler Funktor}

Immer im Auge behaltend, dass wir die Isomorphie zweier Limes zeigen wollen, führen wir nun den Begriff des schwach und den des stark kofinalen Funktors ein. In Satz \ref{1.9} werden wir sehen, dass Kofinalität ein starkes Werkzeug ist, um Limes zu vergleichen.\\
Entscheidender Bestandteil des Abschnitts ist ein aufwändiges Beispiel eines stark kofinalen Funktors von der Kategorie der offenen Umgebungen eines Raumes $X$ in die zur Kategorie eines Polyeders unter $X$ duale Kategorie.

\subsection{Definitionen und Satz}

Die Begriffsbildung des kofinalen Funktors und der nachfolgenden Satz sind in \cite{MR1335915} nachzulesen, ebenso wie die Definition des konkreten kofinalen Funktors und die der zugehörigen Kategorien, die in diesem Abschnitt beleuchtet werden sollen. 
\bigskip

Schwache und starke Kofinalität wird wie folgt definiert:

\begin{defi}[Kofinalität] \label{kofi}
Ein Funktor $\Theta\colon \Omega\rightarrow\Lambda$ heißt schwach kofinal, falls es für alle $\lambda\in\Lambda$ einen Morphismus $\lambda\rightarrow\Theta(\omega)$ für ein $\omega\in\Omega$ gibt. Solch ein Funktor heißt stark kofinal, falls zusätzlich jedes Paar von Morphismen $\Theta\omega_1\leftarrow\lambda\rightarrow\Theta\omega_2$ mit $\omega_i\in \Omega$ und $\lambda\in\Lambda$  zu einem kommutativen Diagramm wie folgt vervollständigt werden kann:\[
\begin{xy}
				\xymatrix{
				                          & \Theta\omega_1 \ar[rd]^{\Theta g_1} &              \\
				 \lambda \ar[ru] \ar[rd]  &                                     & \Theta\omega \\
				                          & \Theta\omega_2 \ar[ru]^{\Theta g_2} &              
				}
\end{xy}
\]Hierbei sind $g_i\colon\omega_i\rightarrow \omega$ Morphismen in $\Omega.$
\end{defi}

Der nachfolgende Satz zeigt, wie nützlich die Begriffsbildung der Kofinalität für unsere Zwecke sein kann.

\begin{satz}\label{1.9}
Sei $\Theta\colon\Omega\rightarrow\Lambda$ ein Funktor. Für jeden Kofunktor $I\colon\Lambda\rightarrow\mathscr{K}$ und jedes Objekt $K\in\mathscr{K}$ definiere $E=I\Theta$ und\[
\hat{\Theta}\colon\text{Transf}(K,I)\rightarrow\text{Trans}(K,E) \text{\quad durch \quad } (\hat{\Theta}\varphi)=\varphi_{\Theta\omega}.
\]
\begin{enumerate}
	\item Ist $\Theta$ schwach kofinal, so ist $\hat{\Theta}$ injektiv.
	\item Ist $\Theta$ stark kofinal, dann ist $\hat{\Theta}$ bijektiv. Außerdem gilt: $u\colon L\rightarrow I$ ist universell $\Leftrightarrow                \hat{\Theta}(u)\colon L\rightarrow E$ ist universell.
				\[ \stackrel{Lemma \ref{eindlim}}{\Rightarrow} lim(E)\cong lim(I), \text{ falls einer dieser beiden existiert.}
				\]
\end{enumerate}
\end{satz}
\begin{bew}
\begin{itemize}
	\item[zu 1.)] Betrachte zuerst, warum für eine Transformation $\varphi\in\text{Transf}(K,I)$ das Bild $\psi:=\hat{\Theta}\varphi                          \in\text{Transf}(K,E)$ ist.	Sei hierfür $g\colon\omega_1\rightarrow\omega_2$ ein Morphismus in $\Omega$. Es ist\[
	             \begin{array}{llll}
	             \psi_{\omega_1} &=& \varphi_{\Theta\omega_1} & \cr
	             								 &=& I(\Theta g)(\varphi_{\Theta\omega_2}) & \text{ , da $\varphi$ eine Transformation ist}\cr
	             								 &=& Eg\psi_{\omega_2}. & \cr
	             								 \end{array}
							 \]Da dies zeigt, dass $\hat{\Theta}$ eine Abbildung wie gefordert ist, soll nun die Injektivität nachgewiesen werden:\\							 
							 Sei wie eben $\varphi\in\text{Transf}(K,I)$ eine Transformation. Da $\Theta$ schwach kofinal ist, existiert für alle $\lambda \in                   \Lambda$ ein Morphismus $f\colon\lambda\rightarrow\Theta\omega$ für ein $\omega\in\Omega$. Es gilt\[                                                \varphi_\lambda=If(\varphi_{\Theta\omega})=If(\hat{\Theta}\varphi)_\omega, \]da $\varphi$ eine Transformation ist. $\varphi$ wird                   auf diese Art und Weise durch $\hat{\Theta}\varphi$ ausgedrückt, woraus folgt, dass $\hat{\Theta}$ injektiv ist.
	\item[zu 2.)] Ein Inverses zu $\hat{\Theta}$ kann wie folgt konstruiert werden:\\
							 Für eine Transformation $\psi\in\text{Transf}(K,E)$ definiere $U\psi\in\text{Transf}(K,I)$ durch $(U\psi)_\lambda:=If\psi_\omega,$									 wobei $(\omega,f)$ passend zu $\lambda$ gewählt sind, so dass $f\colon\lambda\rightarrow\Theta\omega$ ein Morphismus in $\Lambda$ ist.		 
							  Im Allgemeinen wird die                    Definition von $U\psi$ von der Wahl von $(\omega,f)$ abhängen. Das ist nicht der Fall, wenn $\Theta$ stark kofinal ist. Denn für                  zwei Wahlen $f_1\colon\lambda\rightarrow\Theta\omega_1$ und $f_2\colon\lambda\rightarrow\Theta\omega_2$ können                  $g_i\colon\omega_i\rightarrow\omega$ für $i=1,2$ gefunden werden, so dass gilt: \begin{eqnarray}
							 \Theta g_1\circ f_1=\Theta g_2\circ f_2.
							 \label{stako}
							 \end{eqnarray}
							 Dann folgt\[
							 \begin{array}{llll}							
							 I(f_i)\psi_{\omega_i} &=& I(f_i)E(g_i)\psi_{\omega} & \text{ , da $\psi$ eine Transformation ist} \cr
							                       &=&I(f_i)I(\Theta(g_i))\psi_\omega & \text{ , nach der Definition von $E$} \cr
							                       &=&I(\Theta(g_i)f_i)\psi_\omega & \text{ , da $I$ ein Kofunktor ist.}							 
							 \end{array}
							 \]Der letzte Term hängt wegen (\ref{stako}) nicht von $i$ ab .\\
							 Es bleibt zu zeigen, dass $U\psi$ eine Transformation und $U$ invers zu $\hat{\Theta}$ ist. Für ersteres sei                                            $e\colon\mu\rightarrow\lambda$ ein Morphismus in $\Lambda$ und $f\colon\lambda\rightarrow\Theta\omega$ wie eben. Es ist\[
							 Ie(U\psi)_\lambda=IeIf\psi_\omega=I(fe)\psi_\omega=(U\psi)_\mu,
							 \] was genau die Transformationseigenschaft für $U\psi$ darstellt.
						   Weiter gilt\[
						   \begin{array}{lllllll}
							 (U\hat{\Theta}\varphi)_\lambda & \stackrel{Def.}{=} & If(\hat{\Theta}\varphi)_\omega &\stackrel{Def.}{=}&	If\varphi_{\Theta\omega}                 &\stackrel{\varphi\text{Trafo}}{=}& \varphi_\lambda \cr
							 (\hat{\Theta}U\psi)_\omega &\stackrel{Def.}{=}& (U\psi)_{\Theta\omega} &\stackrel{Def.}{=}& I(id)\psi_\omega &\stackrel{\psi                        \text{Trafo}}{=}& \psi_\omega,\cr                                                                                                                   \end{array}			 
							 \]also ist $U$ invers zu $\hat{\Theta}$ und $\hat{\Theta}$ somit bijektiv.\\
							 Für den zweiten Teil der Behauptung in 2. sei $u\colon L\rightarrow I$ universell.
							 Dann ist $\mathscr{K}(K,L)\cong\text{Transf}(K,I)$, und ebenso folgt aus der Universalität von $\hat{\Theta}(u)\colon L\rightarrow                   E$, dass $\mathscr{K}(K,L)\cong\text{Transf}(K,E)$ gilt.\\ Betrachte das folgende kommutative Diagramm:\[
							 \begin{xy}
							 		\xymatrix{ & \text{Transf}(K,I) \ar[dd]^{\cong} \\
							 		          \mathscr{K}(K,L)\ar[ru]^{,u\circ`} \ar[rd]_{,\hat{\Theta}u\circ`} & \\
							 		           & \text{Transf}(K,E)
							 		}
							 \end{xy} 
							 \]Ist der obere linke Pfeil ein Isomorphismus, so ist es auch der untere und umgekehrt. Das zeigt den zweiten Teil der Behauptung.
\end{itemize}
\end{bew}

Als nächstes behandeln wir das angekündigte Beispiel des kofinalen Funktors. Es ist zwar recht kompliziert, doch die Arbeit lohnt sich, denn wir werden es später benötigen. Der Funktor involviert die Kategorie der offenen Umgebungen eines topologischen Raumes $X$ sowie die Kategorie $\Lambda_{X}$, welche zur Kategorie eines Polyeders unter $X$ dual ist. Diese müssen wir nun zunächst definieren:

\begin{defi}[Kategorie der offenen Umgebungspaare]\label{umgkat}
Sei $(X,A)$ ein kompaktes topologisches Raumpaar in einem kompakten euklidischen Umgebungsretrakt $\mathrm E$. Wir definieren $\Omega_{(X,A)}$ als die Menge aller offenen Umgebungspaare von $(X,A)$ in $\mathrm E$, quasi-geordnet durch umgedrehte Inklusion (das heißt $(U,V)<(W,Z)$, falls $(W,Z)\subset (U,V)$ für $(U,V), (W,Z)\in\Omega_{(X,A)}$).\\
Sie ist sogar gerichtet, denn der Schnitt zweier offener Umgebungspaare ist wieder ein offenes Umgebungspaar, das in beiden ursprünglichen Umgebungspaaren enthalten ist, was genau die in der Definiton für eine gerichtete Menge geforderte Eigenschaft ist.\\ Die entsprechende Definition für offene Umgebungen eines kompakten topologischen Rau\-mes $X$ in einem umgebenden kompakten euklidischen Umgebungsretrakt bezeichnen wir mit $\Omega_X$.\\
Beide gerichteten Mengen fassen wir wie oben beschrieben als Kategorie auf.
\end{defi}

Die zweite Kategorie, die eine Rolle spielt, ist die Kategorie $\mathscr{P}^{X}$ eines "`Polyeders unter X"' beziehungsweise die Kategorie $\Lambda_{X}$ mit $\mathscr{P}^{X}=\Lambda_{X}^{op}$. Bevor wir aber ein "`Polyeder unter X"' verstehen können, sollten wir beginnen, ein Polyeder zu begreifen. Die nun folgenden simplizialen Grundlagen sind \cite{MR666554} entnommen und dienen dem besseren Verständnis eines Polyeders.

\begin{defi}[Geometrische Realisierung eines Simplizialkomplexes]
Es sei ein nichtleerer Simplizialkomplex $K$ gegeben. Diesem kann ein topologischer Raum $|K|$ zugeordnet werden, der geometrische Realisierung von $K$ genannt wird. Definiere hierfür $|K|$ als die Menge aller Funktionen $\alpha$ von der Eckenmenge von $K$ nach $I$, so dass gilt:
\begin{itemize}
	\item Für alle $\alpha$ ist $\left\{v \in K \mid \alpha(v)\neq 0\right\}$ ein Simplex von $K$. Insbesondere ist dann $\alpha(v)\neq 0$ nur für eine endliche Menge von Ecken.
	\item Für alle $\alpha$ ist $\sum\limits_{v\in K}{\alpha(v)}=1$.
\end{itemize}
Ist $K$ leer, so definiere $|K|=\emptyset$. Die reelle Zahl $\alpha(v)$ wird $v$-te baryzentrische Koordinate von $\alpha$ genannt. \\
Um die Topologie auf $|K|$ zu definieren, betrachte für ein Simplex $s$ in $K$ die Menge \[|s|:=\left\{\alpha \in |K| \mid \alpha(v)\neq 0 \Rightarrow v\in s \right\},\] die als abgeschlossenes Simplex bezeichnet wird. Der Raum $|K|$ erhält die schwache Topologie, das heißt $A\subset|K|$ ist abgeschlossen (oder offen) in $|K|$, genau dann, wenn $A\cap|s|$ abgeschlossen (oder offen) in $|s|$ für alle $s\in K$ ist.
\end{defi}

Die Zuordnung $|\cdot|$ ist sogar ein Funktor von der Kategorie der Simplizialkomplexe in die der topologischen Räume. Denn sei eine simpliziale Abbildung $f\colon K_1\rightarrow K_2$ zwischen zwei Simplizialkomplexen $K_1$ und $K_2$ gegeben, so wird durch \[
|f|(\alpha)(v^\prime)=\sum_{f(v)=v^\prime}\alpha(v) \text{ für } v^\prime \in K_2
\] eine stetige Abbildung $|f|\colon |K_1|\rightarrow |K_2|$ definiert.

\begin{bem}
Für jeden Simplizialkomplex $K$ ist seine geometrische Realisierung $|K|$ ein normaler Hausdorff-Raum.\footnote{Siehe beispielsweise \cite{MR666554}(Thm 3.1.17).}
\end{bem}

Nicht nur die Definition des abgeschlossenen Simplex ist wichtig, auch das sogenannte offene Simplex ist eine nützliche Definition im Umgang mit der Topologie von Simplizialkomplexen:

\begin{defi}[Offenes Simplex]
Sei $K$ ein Simplizialkomplex. Definiere das offene Simplex $<s>\subset |K|$ durch\[
<s>:=\left\{\alpha \in |K| \mid \alpha(v)\neq 0 \Leftrightarrow v\in s \right\}.
\]
Ein offenes Simplex ist nicht unbedingt offen in $|K|$ aber immer offen in $|s|$.\\
Die offenen Simplizes eines Komplexes partitionieren diesen im folgenden Sinne: Jeder Punkt $\alpha\in|K|$  gehört zu einem eindeutigen offenen Simplex von $K$. Dieses ist das Simplex $<s>$, wobei $s=\left\{v\in K\mid \alpha(v)\neq 0\right\}$ ist.
\end{defi}

Zusätzlich dazu, dass jeder Punkt in $|K|$ in einem eindeutigen offenen Simplex wiedergefunden werden kann, gibt es auch für jeden Punkt eine Darstellung als Konvexkombination von Elementen aus $|K|$.
Denn werden die Ecken $v$ eines Simplizialkomplexes $K$ mit ihrer charakteristischen Funktion  
\[
v(v^\prime)=\left\{\begin{array}{ll}
													0 & \text{ , falls  $v\neq v^\prime$}\\
													1 & \text{ , falls  $v=v^\prime$,}
													\end{array}
\right. \] in $|K|$ identifiziert, so kann jedes $\alpha\in |K|$ wie folgt als Konvexkombination von Punkten aus $|K|$ geschrieben werden: \[
\alpha=\sum\limits_{v\in K}{\alpha(v)v}. \]

\begin{bem}\label{statop}
Es ist möglich, eine weitere Topologie auf der geometrischen Realisierung eines Komplexes $K$ zu definieren, indem die gröbste Topologie gewählt wird, so dass die durch die eben definierten charakteristischen Funktionen induzierten Abbildungen $|K|\rightarrow [0,1]$ stetig sind. Diese Topologie wird starke Topologie genannt. Für lokal endliche Komplexe stimmt sie mit der schwachen überein. \footnote{Vgl. \cite{MR1335915}, Remark 7.14}
\end{bem}

Vorangegangenes hilft uns,  die Topologie der Polyeder besser zu verstehen, denn:

\begin{defi}[Polyeder]
Eine Triangulierung $(K,f)$ eines topologischen Raumes $X$ besteht aus einem Simplizialkomplex $K$ und einem Homöomorphismus $f\colon|K|\rightarrow X$. Hat $X$ eine Triangulierung, so wird $X$ ein Polyeder genannt.
\end{defi}

Es ist uns schließlich möglich, die Kategorie eines "`Polyeders unter $X$"' und den stark kofinalen Funktor zu definieren.

\begin{defi}[Polyeder unter X]\label{poly}
Sei $X$ ein topologischer Raum. Definiere die Kategorie $\mathscr{P}^{X}$ als Kategorie mit Homotopieklassen von Abbildungen $\left[\xi\colon X\rightarrow P_\xi\right]$ von $X$ in ein Polyeder $P_\xi$ als Objekte. Ein Morphismus zwischen $\xi$ und $\eta$ ist eine Homotopieklasse von Abbildungen $\left[\alpha\colon P_\xi\rightarrow P_\eta\right]$, so dass $\alpha\xi\simeq\eta$ ist. Die Komposition ist gegeben durch Komposition der Abbildungen $\alpha$.\\ Die Kategorie $\mathscr{P}^{X}$ wird ein Polyeder unter $X$ genannt.
\end{defi}

Die duale Kategorie $K^{op}$ zu einer Kategorie $K$ bezeichnet die Kategorie, die dieselben Objekten wie $K$ als Objekte hat, in der allerdings die Morphismen in die "`andere"' Richtung zeigen. Das heißt, dass es für jeden Morphismus $f\colon A\rightarrow B$ in $K$ einen Morphismus $f^{op}\colon B\rightarrow A$ in $K^{op}$ gibt (diese Zuordnung soll injektiv sein) und die Komposition $f^{op}g^{op}=(gf)^{op}$ in $K^{op}$ definiert ist, genau dann wenn $gf$ in $K$ definiert ist. 

\begin{satz} \label{2.4}
Sei $X$ eine kompakte Teilmenge eines kompakten euklidischen Umgebungsretraktes $\mathrm{E}$. 
Dann gibt es einen Funktor $\vartheta$ zwischen $\Omega_{X}$ und der Kategorie $\Lambda_{X}$, wobei $\Lambda_{X}^{op}=\mathscr{P}^{X}$ ist, definiert durch folgende Vorschrift:\[
\begin{array}{lrll}
\vartheta\colon & \Omega_{X}  &\rightarrow & \Lambda_{X}\cr 
&U & \mapsto &  \left[X\hookrightarrow U\right] \cr
&\left(U<V\right) & \mapsto & [V\hookrightarrow U],
\end{array}
\] wobei in den Homotopieklassenklammern jeweils die Inklusionsabbildungen stehen.\\
$\vartheta$ ist stark kofinal.
\end{satz}

Sowohl Polyeder als auch Homotopieklassen von Abbildungen kommen im vorherigen Satz vor. Mit diesen umgehen zu können ist also entscheidend. Um Satz \ref{2.4} beweisen zu können, ist daher etwas Vorarbeit nötig, die wir im folgenden Einschub leisten möchten.

\subsection{Homotopietheoretischer Einschub}

Im folgenden Abschnitt geht es um Homotopie, genauer gesagt vor allem um die Homotopieäquivalenz von Räumen. Sein Inhalt ist zum größten Teil \cite{MR1867354} entnommen. Wegen des vorbereitenden Charakters dieses Kapitels wurde versucht, es kurz und knapp  zu halten, weshalb einige Beweise weggelassen wurden.\\
Der erste nun folgende Satz ist ein altes Resultat, welches nicht direkt etwas mit Homotopie zu tun hat, allerdings wird es benötigt, um die Homotopieäquivalenz der in Satz \ref{2.4} definierten Umgebungen zu Polyedern zu zeigen. Auch unabhängig davon,  dass und wofür wir das Ergebnis benötigen, ist es ein interessantes Resultat, denn es liefert die Existenz einer Triangulierung für Mengen, die dies auf den ersten Blick nicht vermuten lassen.\\
Für den Beweis wird eine "`zum Rand hin immer kleiner"' werdende Folge von Unterteilungen eines Simplizialkomplexes benötigt. Der Beweis ist nachzulesen in \cite{MR0345087}(III.3.2.), wo er auch \textit{"`Satz von Runge"'} genannt wird.

\begin{satz}[Offene Teilmengen von Polyedern im $\mathbb R^n$]\label{off-rn}
Sei $P$ ein Polyeder, welches für ein $n\in \mathbb N$ linear in den $\mathbb R^n$ einbettbar ist. Dann ist jede offene Teilmenge $U$ des Polyeders $P$ ein (im Allgemeinen unendliches) Polyeder.
\end{satz}

Wie angekündigt müssen wir uns vor allem damit beschäftigen, wann Räume homotopieäquivalent zu Polyedern sind. Die folgende Definition der Dominanz von Räumen kann dabei insofern nützlich sein, als dass Räume dominiert von Polyedern homotopieäquivalent zu CW-Komplexen sind, welche wiederum homotopieäquivalent zu Simplizialkomplexen sind.\\
Bis zum Ende des Abschnitts können die folgenden Sätze und Definitionen in \cite{MR1867354} wiedergefunden werden.

\begin{defi}Ein topologischer Raum $Y$ heißt dominiert von einem Raum $X$, falls es Abbildungen $Y\stackrel{i}{\rightarrow}X\stackrel{r}{\rightarrow}Y$ mit $ri\simeq id_Y$ gibt.
\end{defi}

Um Teile des oben Behaupteten zu beweisen, benötigen wir die folgende Definition des Abbildungsteleskops, welches eine Verallgemeinerung des Abbildungszylinders darstellt.

\begin{defi}[Abbildungszylinder, Abbildungsteleskop]
Sei $f\colon X\rightarrow Y$ eine Abbildung zwischen topologischen Räumen $X$ und $Y$. Die Menge $M_f$ mit\[
M_f := {(X\times I) \amalg Y}/\sim \text{ , wobei } (x,1)\sim f(x)
\] wird der Abbildungszylinder von $f$ genannt.\\
Weiter definiere für eine Folge von Abbildungen $X_1\stackrel{f_1}{\rightarrow}X_2\stackrel{f_2}{\rightarrow}X_3\stackrel{f_3}{\rightarrow}\ldots$ das Abbildungsteleskop als Quotientenraum von $\amalg_{i}(X_i\times[i,i+1])$, in dem $(x,i+1)\in X_i\times[i,i+1]$ mit $(f(x),i+1)\in X_{i+1}\times[i+1,i+2]$ identifiziert wird.\\ 
Das Abbildungsteleskop ist also die Vereinigung der Abbildungszylinder $M_{f_i}$, wobei für alle $i$ die Kopie von $X_{i}$ in $M_{f_i}$ mit $X_i\subset M_{f_{i-1}}$ identifiziert wird. Es wird mit $T(f_1,f_2,\ldots)$ bezeichnet.
\end{defi}

Die nachfolgenden Eigenschaften des Abbildungsteleskops machen es zu einer nützlichen Begriffsbildung.

\begin{lem}\label{teleig}
Sei eine Folge von Abbildungen $X_1\stackrel{f_1}{\rightarrow}X_2\stackrel{f_2}{\rightarrow}X_3\stackrel{f_3}{\rightarrow}\ldots$ von topologischen Räumen gegeben. Dann gilt:
\begin{itemize}
	\item[1.] $T(f_1,f_2,\ldots)\simeq T(g_1,g_2,\ldots)$, falls $f_i\simeq g_i$ für alle $i$.
	\item[2.] $T(f_1,f_2,\ldots)\simeq T(f_2,f_3,\ldots)$.
	\item[3.] $T(f_1,f_2,\ldots)\simeq T(f_2f_1,f_4f_3,\ldots)$.
\end{itemize}
\end{lem}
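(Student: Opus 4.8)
Der Plan ist, die drei Aussagen einzeln zu behandeln: 2.\ und 3.\ lassen sich als Deformationsretraktionen gewinnen, w\"ahrend 1.\ die explizite Konstruktion einer Homotopie\"aquivalenz nebst Inversem erfordert. Grundlage f\"ur 2.\ und 3.\ ist die bekannte Tatsache, dass ein Abbildungszylinder $M_h$ l\"angs der Zylinderrichtung auf seinen Zielrand deformationsretrahiert. F\"ur 2.\ wende ich dies auf den ersten Zylinder $M_{f_1}=X_1\times I\cup_{f_1}X_2$ an: Er zieht sich auf die Kopie von $X_2$ zusammen, wobei $(x,2)\sim f_1(x)$ das Ziel ist. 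Erg\"anzt durch die Identit\"at auf allen \"ubrigen Zylindern -- beide Vorschriften stimmen auf dem gemeinsamen $X_2$ \"uberein -- liefert dies eine Deformationsretraktion von $T(f_1,f_2,\ldots)$ auf $T(f_2,f_3,\ldots)$ und damit die behauptete \"Aquivalenz.

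F\"ur 3.\ benutze ich als Baustein, dass f\"ur $f\colon X\to Y$ und $g\colon Y\to Z$ der Doppelzylinder $M_f\cup_Y M_g$ auf den Zylinder $M_{gf}$ der Verkettung deformationsretrahiert: Man schiebt $M_g$ wie in 2.\ auf $Z$ zusammen und zieht den bei $Y$ angeklebten Rand von $M_f$ mit, so dass dessen oberes Ende von $f(x)\in Y$ nach $g(f(x))\in Z$ wandert und $M_f$ zu $M_{gf}$ umparametrisiert wird. Wendet man dies simultan auf alle Paare $\bigl(M_{f_{2k-1}},M_{f_{2k}}\bigr)$ an, so sind die einzelnen Retraktionen vertr\"aglich, da sie die ungerade indizierten R\"ander $X_{2k-1}$ festlassen; zusammengesetzt ergeben sie eine Deformationsretraktion von $T(f_1,f_2,\ldots)$ auf $T(f_2f_1,f_4f_3,\ldots)$.

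Die eigentliche Schwierigkeit liegt in 1. Hier w\"ahle ich Homotopien $H_i\colon X_i\times I\to X_{i+1}$ mit $H_i(\cdot,0)=f_i$ und $H_i(\cdot,1)=g_i$ und definiere $\Phi\colon T(f_1,f_2,\ldots)\to T(g_1,g_2,\ldots)$ zylinderweise: Auf der unteren H\"alfte des $i$-ten Zylinders durchlaufe ich umparametrisiert den gesamten $i$-ten $g$-Zylinder von $(x,i)$ bis zum Punkt $g_i(x)$ im unteren Rand $X_{i+1}$, auf der oberen H\"alfte laufe ich innerhalb dieser Kopie von $X_{i+1}$ l\"angs des Weges $t\mapsto H_i(x,1-t)$ von $g_i(x)$ nach $f_i(x)$. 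So wird das obere Ende des $i$-ten $f$-Zylinders gerade auf $f_i(x)$ abgebildet, was mit dem unteren Ende des $(i{+}1)$-ten Zylinders zusammenpasst; daher ist $\Phi$ \"uber alle Verklebungen hinweg wohldefiniert und stetig. Die umgekehrten Homotopien $\overline{H_i}:=H_i(\cdot,1-\cdot)$ liefern analog eine Abbildung $\Psi$ in die Gegenrichtung.

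Der verbleibende und heikelste Schritt, den ich als Hauptaufwand erwarte, ist der Nachweis von $\Psi\Phi\simeq\mathrm{id}$ und $\Phi\Psi\simeq\mathrm{id}$. Die ben\"otigten Homotopien gewinne ich aus den Konkatenationen $H_i\ast\overline{H_i}$, die relativ zu ihren Endpunkten nullhomotop sind; der Punkt ist, diese lokalen Homotopien \"uber alle unendlich vielen Zylinder hinweg stetig zusammenzuf\"ugen und die Vertr\"aglichkeit an den Verklebungsstellen sowie an den Niveaus $s=i+\tfrac{1}{2}$ sorgf\"altig zu pr\"ufen. Alternativ l\"asst sich 1.\ auch daraus ableiten, dass das Teleskop ein Modell f\"ur den Homotopie-Kolimes der Folge ist, dessen Homotopietyp nur von den Homotopieklassen der $f_i$ abh\"angt; der oben skizzierte direkte Weg vermeidet jedoch diesen zus\"atzlichen Apparat.
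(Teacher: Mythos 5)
Ihre Behandlung der Aussagen 2.\ und 3.\ deckt sich im Wesentlichen mit der Arbeit: Dort wird f\"ur 2.\ ebenfalls der erste Zylinder l\"angs der Segmente $\{x\}\times I$ auf $X_2$ zusammengeschoben, und f\"ur 3.\ wird das Ankleben von $X_{2i-1}\times[2i-1,2i]$ an $X_{2i}\subset M_{f_{2i}}$ zun\"achst an diesem Zylinder entlang nach $X_{2i+1}$ verschoben (die Anklebeabbildung also durch die dazu homotope Abbildung $f_{2i}f_{2i-1}$ ersetzt) und danach wie in 2.\ deformationsretrahiert. Ihr einschrittiges Argument, $M_f\cup_Y M_g$ deformationsretrahiere auf $M_{gf}$, ist dieselbe Idee; allerdings ist $M_{gf}$ im Allgemeinen kein Teilraum von $M_f\cup_Y M_g$ (etwa wenn $f$ nicht injektiv ist), so dass die zweistufige Formulierung der Arbeit -- erst die Anklebeabbildung homotopieren, dann echt retrahieren -- die sauberere Fassung derselben Konstruktion ist.

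Bei Aussage 1.\ weichen Sie von der Arbeit ab, und genau hier liegt die L\"ucke. Die Arbeit fasst das Teleskop als Verklebung von $\amalg_i(X_i\times[i,i+1])$ an $\amalg_i X_i$ l\"angs der $f_i$ auf und beruft sich auf den Standardsatz, dass Ankleben l\"angs homotoper Abbildungen (hier an der Kofaserung $X_i\times\{i+1\}\hookrightarrow X_i\times[i,i+1]$) homotopie\"aquivalente R\"aume liefert; derselbe Satz rechtfertigt auch das Verschieben in 3. Sie konstruieren stattdessen $\Phi$ und $\Psi$ explizit -- das ist korrekt und im Prinzip gangbar --, aber der entscheidende Nachweis $\Psi\Phi\simeq\mathrm{id}$ und $\Phi\Psi\simeq\mathrm{id}$ wird nur angek\"undigt, nicht gef\"uhrt. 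Diese Verifikation ist aber der gesamte Inhalt der Behauptung: Die Komposita unterscheiden sich von der Identit\"at um eine Stauchung jedes Zylinders plus einen Hin-und-Zur\"uck-Weg $H_i\ast\overline{H_i}$, und das stetige, an den Niveaus $X_{i+1}$ vertr\"agliche Zusammensetzen der Nullhomotopien rel Endpunkten \"uber unendlich viele Zylinder ist genau die Arbeit, die der von der Arbeit zitierte Standardsatz einem abnimmt. Solange Sie diesen Schritt nicht ausf\"uhren (oder eben jenen Satz heranziehen), bleibt Teil 1.\ unbewiesen.
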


\begin{bew}
Man stelle sich $T(f_1,f_2,\ldots)$ als den Raum $\amalg_i(X_i\times\left\{i\right\})$ vor, in den mit Hilfe der $f_i$ der Raum $\amalg_i(X_i\times[i,i+1])$ eingeklebt worden ist. Es ist auch möglich, sich $T(g_1,g_2,\ldots)$ auf diese Art und Weise vorzustellen, nur mit $f_i$ ersetzt durch $g_i$. Da diese Abbildungen homotop sind, folgt die erste Aussage.\\
Die zweite Aussage folgt, da jeder Punkt $(x,t)$ des ersten Abbildungszylinders $M_{f_{1}}$ entlang des Segments ${x}\times I\subset M_{f_1}$ zu seinem Endpunkt $f_1(x)\in X_2$ stetig verschoben, das erste Abbildungsteleskop also durch Deformation in das zweite überführt werden kann.\\ 
Betrachte für die dritte Aussage $T(f_1,f_2,\ldots)$ als die disjunkte Vereinigung der Abbildungszylinder $M_{f_{2i}}$, in die der Raum $\amalg_i(X_{2i-1}\times[2i-1,2i])$ mit Hilfe der $f_{2i-1}$ eingeklebt wurde. Nun kann für alle $i$ Folgendes durchgeführt werden:\\
Man verschiebe das Ankleben (mittels $f_{2i-1}$) von $X_{2i-1}\times [2i-1,2i]$  an $X_{2i}\subset M_{f_{2i}}$ an letzterem Abbildungszylinder entlang nach $X_{2i+1}$. Dadurch geht man von $M_{f_{2i-1}}\cup M_{f_{2i}}$ nach $M_{f_{2i}f_{2i-1}}\cup M_{f_{2i}}$ über. Vom letzten Raum aus gibt es eine Deformationsretraktion nach $M_{f_{2i}f_{2i-1}}$ (wie bei der zweiten Aussage).
Damit sind alle Behauptungen bewiesen.
\end{bew}

\begin{lem}\label{telcw}
Sei eine Folge von Abbildungen von CW-Komplexen $X_1\stackrel{f_1}{\rightarrow}X_2\stackrel{f_2}{\rightarrow}X_3\stackrel{f_3}{\rightarrow}\ldots$ gegeben. Sind die Abbildungen $f_i$ für alle $i$ zellulär, so ist das Abbildungsteleskop ein CW-Komplex.
\end{lem}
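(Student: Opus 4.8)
The plan is to exhibit the telescope as a colimit of CW-complexes that are glued to one another along cellular maps, building up from the individual mapping cylinders $M_{f_i}$.

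First I would equip each building block $X_i \times [i,i+1]$ with the product CW-structure coming from the given CW-structure on $X_i$ and the standard cell structure on the interval $[i,i+1]$ (two $0$-cells at the endpoints and one $1$-cell). Since $[i,i+1]$ is a finite, in particular compact, CW-complex, this product is again a CW-complex for every CW-complex $X_i$, with cells $e \times \{i\}$, $e \times \{i+1\}$ and $e \times (i,i+1)$ for each cell $e$ of $X_i$; in particular the two end faces $X_i \times \{i\}$ and $X_i \times \{i+1\}$ are subcomplexes. Each such block is precisely the mapping cylinder $M_{f_i}$, with $X_i$ sitting at level $i$ and the image of $f_i$ sitting inside $X_{i+1}$ at level $i+1$.

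Next I would assemble the telescope inductively. Setting $T_1 := X_1 \times [1,2]$ and assuming $T_n$ has been built, the defining identification $(x,i+1) \sim (f_i(x),i+1)$ glues the top face $X_i \times \{i+1\}$, a subcomplex of the previous block, to the bottom face $X_{i+1} \times \{i+1\}$ of $X_{i+1} \times [i+1,i+2]$ via $f_i$. Because $f_i$ is cellular and $X_i \times \{i+1\}$ is a subcomplex, this is exactly the situation of forming an adjunction space by gluing two CW-complexes along a cellular map defined on a subcomplex. The key fact I would invoke is the standard result that such an adjunction space is again a CW-complex, in which the glued copy becomes a subcomplex and the remaining cells of the attached block become the new cells. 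Hence each $T_{n+1} = T_n \cup_{f_n} (X_{n+1} \times [n+1,n+2])$ is a CW-complex containing $T_n$ as a subcomplex.

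Finally, the full telescope is the colimit $T(f_1,f_2,\ldots) = \varinjlim_n T_n$ along these subcomplex inclusions, carrying the weak topology, which coincides with the quotient topology used in its definition. An increasing union of CW-complexes along subcomplex inclusions is again a CW-complex, so $T(f_1,f_2,\ldots)$ is a CW-complex, as claimed. The main obstacle, and the only place where the cellularity hypothesis is genuinely needed, is the verification that attaching along a cellular map preserves the CW-property; I would either cite this standard lemma or, for a self-contained argument, check directly that the attaching maps of the cylinder cells $e \times (i,i+1)$ land in the appropriate skeleton, which follows from the cellularity of $f_i$ together with the product cell structure.
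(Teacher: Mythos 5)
Your argument is correct, but it takes a genuinely different route from the paper's. The paper proves the key fact --- that the mapping cylinder of a cellular map is a CW-complex --- by hand: it takes $X^0\cup Y^0$ as the $0$-skeleton and then inductively attaches, besides the cells prescribed by $X$ and $Y$, one extra $i$-cell for every $(i-1)$-cell of $X$, describing its attaching sphere explicitly as the union of $D$, $f(D)$ and the cells glued in at the previous stage; the telescope is then obtained by performing this construction simultaneously in all cylinders. You instead assemble the telescope from three standard closure properties of the category of CW-complexes: the product with the finite complex $[i,i+1]$ (legitimate since one factor is compact, hence locally compact), the adjunction space $Z\cup_f W$ along a cellular map defined on a subcomplex, and the colimit of an increasing chain of subcomplex inclusions, for which the weak topology agrees with the quotient topology in the definition of the telescope. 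Both routes produce the same final cell decomposition (the cells of each $X_i$ at level $i$ plus one prism cell $e\times(i,i+1)$ per cell $e$ of $X_i$); the paper's version is self-contained and makes the attaching maps visible, essentially proving the special case of the adjunction lemma it needs, whereas yours is more modular but delegates exactly that difficulty to the cited standard lemma --- which is fine, since that lemma is proved in the same reference the paper draws on. One small slip: the block $X_i\times[i,i+1]$ is only the cylinder on $X_i$, not the mapping cylinder $M_{f_i}$; the mapping cylinder only appears after the top face has been glued to the next block. This does not affect your construction, which correctly implements the identification $(x,i+1)\sim(f_i(x),i+1)$ from the definition of the telescope.
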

\begin{bew}
Die Aussage folgt daraus, dass Abbildungszylinder zellulärer Abbildungen CW-Komplexe sind. Sei $f\colon X\rightarrow Y$ eine zelluläre Abbildung zwischen CW-Komplexen, das heißt $f(X^n)\subset Y^n$ für alle $n\in \mathbb N$.\\
Um nun zu sehen, dass $M_f$ eine CW-Struktur besitzt, nehmen wir als zu Grunde liegende diskrete Menge $X^0\cup Y^0$ an. An diese kleben wir die für $X$ und $Y$ benötigten $1-$Zellen an, wie es die jeweiligen Strukturen vorschreiben. Zusätzlich kleben wir im ersten Schritt für jedes $x\in X^0$ eine $1$-Zelle zwischen $x$ und $f(x)\in Y^0$ an.\\ Im zweiten Schritt werden wie gehabt zunächst die $2-$Zellen aus $X$ und $Y$ wie in der jeweiligen Struktur vorgesehen und zusätzlich für jede $1-$Zelle $I$ aus $X^1$ eine $2-$Zelle angeklebt, und zwar entlang des "`Kreises"' bestehend aus $I, f(I)$ und den $1-$Zellen, die im ersten Schritt zusätzlich zwischen deren Randpunkten eingeklebt worden sind.\\ Allgemein kleben wir also im $i-$ten Schritt (i>1) innerhalb $X$ und $Y$ alles an, wie von deren Struktur vorgeschrieben, zusätzlich aber auch noch für jede $(i-1)$-Zelle $D^{i-1}$ aus $X^{i-1}$ eine $i-$Zelle entlang der $(i-1)$-Sphäre bestehend aus $D, f(D)$ und dem, was im vorherigen Schritt zwischen deren Rändern eingeklebt worden ist.\\
Eine solche Konstruktion kann simultan in allen Abbildungszylindern, die das Abbildungsteleskop bilden, durchgeführt werden und mündet dann in einer CW-Struktur auf $T(f_1,f_2,f_3,\ldots)$.
\end{bew}

Dank dieser Vorbereitungen können wir nun also die Dominanz von Räumen benutzen, um die Homotopieäquivalenz zu Simplizialkomplexen nachzuweisen.

\begin{satz}\label{domCW}
Ein topologischer Raum, der von einem CW-Komplex dominiert wird, ist homotopieäquivalent zu einem CW-Komplex.
\end{satz}

\begin{bew}
Es seien topologische Räume $Y$ und $X$ gegeben, so dass es Abbildungen\linebreak[4] $Y\stackrel{i}{\rightarrow}X\stackrel{r}{\rightarrow}Y$ mit $ri\simeq id_Y$ gibt.\\
 Mit den Abbildungsteleskopeigenschaften aus Lemma \ref{teleig} ist
\begin{eqnarray*}
	T(ir,ir,\ldots) &\stackrel{(3)} {\simeq}& T(r,i,r,i,\ldots) \cr
	                &\stackrel{(2)}{\simeq} & T(i,r,i,r,\ldots) \cr
	                &\stackrel{(3)}{\simeq} & T(ri,ri,\ldots).
	\end{eqnarray*}
Mit Eigenschaft $(1)$ und der Tatsache, dass $ri\simeq id_Y$ ist, folgt, dass $T(ri,ri,\ldots)$ homotopieäquivalent zum Teleskop der Identitätsabbildungen $Y\rightarrow Y\rightarrow Y\rightarrow Y \ldots$ ist, also homotopieäquivalent zu $Y\times[0,\infty)\simeq Y$. Andererseits ist aber $ir$ wegen zellulärer Approximation homotop zu einer zellulären Abbildung zwischen CW-Komplexen $f\colon X\rightarrow X$, woraus erneut mit $(1)$ folgt, dass $T(ir,ir,\ldots)\simeq T(f,f,\ldots)$ ist. Wegen \ref{telcw} ist das ein CW-Komplex.
Insgesamt folgt also, dass $Y$ homotopieäquivalent zu einem CW-Komplex ist.
\end{bew}

Wie für CW-Komplexe üblich, wird im Beweis zu folgendem Satz ein Induktionsargument ausgenutzt. Entscheidend für den Beweis ist weiterhin die Approximation stetiger Abbildungen durch simpliziale Abbildungen sowie der Fakt, dass durch homotope Anklebungsabbildungen entstandene Räume homotopieäquivalent sind. Eine Schwierigkeit stellt hier jedoch dar, dass Gebilde, die ähnlich nützliche Eigenschaften haben wie der Abbildungszylinder, in der simplizialen Kategorie weniger einfach zu definieren sind.\\ Ebensolche werden hier aber zur Konstruktion des Simplizialkomplexes benötigt. Ein vollständiger Beweis findet sich in \cite{MR1867354}(Thm 2C.5.).

\begin{satz}\label{simpCW}
Jeder CW-Komplex $X$ ist homotopieäquivalent zu einem Simplizialkomplex, der so gewählt werden kann, dass er dieselbe Dimension wie $X$ hat, endlich ist, falls $X$ endlich ist und abzählbar ist, falls $X$ abzählbar ist.
\end{satz}

\subsection{Nachweis der Kofinalität}

Es ist nun soweit, dass wir Satz \ref{2.4} beweisen können. Die Ideen des Beweises entstammen \cite{MR1335915}. \\
Zur Erinnerung hier noch einmal die Formulierung:\\
Sei $X$ eine kompakte Teilmenge eines kompakten euklidischen Umgebungsretraktes $\mathrm{E}$. 
Dann gibt es einen Funktor $\vartheta$ zwischen $\Omega_{X}$ und der Kategorie $\Lambda_{X}$, wobei $\Lambda_{X}^{op}=\mathscr{P}^{X}$ ist, definiert durch folgende Vorschrift:\[
\begin{array}{lrll}
\vartheta\colon & \Omega_{X}  &\rightarrow & \Lambda_{X}\cr 
&U & \mapsto &  \left[X\hookrightarrow U\right] \cr
&\left(U<V\right) & \mapsto & [V\hookrightarrow U],
\end{array}
\] wobei in den Homotopieklassenklammern jeweils die Inklusionsabbildungen stehen.\\
$\vartheta$ ist stark kofinal.

\begin{bew}[von Satz \ref{2.4}] 
Als erstes ist zu zeigen, dass $\vartheta$ einen Funktor zwischen den angegebenen Kategorien darstellt. Die Funktoreigenschaften sind klar, es reicht also zu zeigen, dass $\vartheta$ tatsächlich in die Kategorie $\Lambda_{X}$ abbildet.  Damit das gilt, muss jede Umgebung $U$ von $X$ im euklidischen Umgebungsretrakt $\mathrm{E}$ homotopieäquivalent zu einem Polyeder sein.\\
Hierfür ist Satz \ref{off-rn} entscheidend, der besagt, dass offene Teilmengen von Polyedern im $\mathbb R^n$ wieder Polyeder sind.
Insbesondere sind dann alle offenen Teilmengen des $\mathbb R^n$ Polyeder.\\
Sei $U$ eine offene Umgebung von $X$ in $\mathrm{E}$. Da $\mathrm E$ ein euklidisches Umgebungsretrakt ist, ist es per Definition homöomorph zu einem Umgebungsretrakt $Y\subset \mathbb R^n$.\\ Es bezeichne $\varphi\colon E\stackrel{\approx}{\rightarrow} Y$ den Homöomorphismus zwischen $E$ und $Y\subset \mathbb R^n$. Wiederum per Definition existiert eine in $\mathbb R^n$ offene Umgebung $P$ von $Y$ (welche nach dem zitierten Satz ein Polyeder ist) und eine Retraktion $r\colon P\rightarrow Y$.\\ Letzteres bedeutet, dass $Y\stackrel{i}{\hookrightarrow}P\stackrel{r}{\rightarrow}Y$ die Identitätsabbildung ist.
\noindent Definiere $\widetilde{U}:=\varphi(U)\approx U$, welches eine offene Menge in $Y$ darstellt. Daher ist $r^{-1}(\widetilde{U})$ offen im Polyeder $P\subset \mathbb R^n$ und damit selbst ein Polyeder. Betrachte \[
\iota\colon\widetilde{U}\hookrightarrow r^{-1}(\widetilde{U}) \text{ die Inklusionsabbildung und } \tilde{r}:=r_{|_{r^{-1}(\widetilde{U})}}\colon r^{-1}(\widetilde{U})\rightarrow \widetilde{U},
\] wobei erstere Abbildung Sinn macht, da $\widetilde{U}\subset Y$ und damit $\widetilde{U}\subset r^{-1}(\widetilde{U})$  wegen der Retraktionseigenschaft von $r$ gilt. Es ist $\tilde{r}\iota = id_{\widetilde{U}}$, da $r$ eine Retraktion ist, und es folgt, dass $\widetilde{U}$ das Retrakt eines Polyeders ist.\\
Insbesondere ist $\widetilde{U}$  daher dominiert von einem Polyeder. Mit den Sätzen \ref{domCW} und \ref{simpCW} ist $\widetilde{U}$ (und damit $U$) also homotopieäquivalent zu einem Polyeder.

Als nächstes müssen wir die schwache Kofinalität von $\theta$ zeigen:\\
Sei dazu $[\xi\colon X \rightarrow P_\xi]$ aus $\Lambda_{X}$ gegeben. Für schwache Kofinalität ist die Existenz einer offenen Umgebung $U \in \Omega_{X}$ und einer Abbildung $\eta\colon U\rightarrow P_\xi$ nachzuweisen, so dass $\xi\simeq\eta\iota$ gilt, wobei $\iota\colon X\hookrightarrow U$ die Inklusion ist.\\ Bis auf Homotopie muss die Abbildung $\eta$ also auf $X$ mit $\xi$ übereinstimmen. 
Da $X$ eine kompakte Teilmenge des euklidischen Umgebungsretraktes $\mathrm{E}$ ist, findet man eine offene Umgebung $U$ von $X$, in der $X$ abgeschlossen ist. Nach einer verallgemeinerten Form des Tietzeschen Erweiterungssatzes\footnote{Zum Beispiel zu finden in \cite{MR0048821} (Thm 5.5).} ist nun $\xi\colon X\rightarrow P_\xi$ auf ganz $U$ erweiterbar. Diese Erweiterung von $\xi$ liefert eine Abbildung ${\eta}\colon U\rightarrow P_\xi$, die auf $X$ sogar mit der Abbildung $\xi$ übereinstimmt, also insbesondere die gestellte Forderung erfüllt. 

Für die starke Kofinalität seien nun zwei solcher Abbildungen \[\eta_1\colon U_1\rightarrow P_\xi\leftarrow U_2\colon\eta_2\] von offenen Umgebungen $U_i$ von $X$ gegeben, welche auf $X$ (bis auf Homotopie) übereinstimmen. Wir suchen nun eine offene Umgebung $U$ von $X$, für die 
$U\subset U_i$ gilt und auf der $\eta_1$ und $\eta_2$ homotop sind. Das folgende Diagramm soll also bis auf Homotopie kommutieren.\[
\begin{xy}
\xymatrix{
 & U_1 \ar[rd]^{\eta_1} & \cr
U \ar[ru]^{\subset} \ar[rd]_{\subset} & &P_\xi \cr
 & U_2 \ar[ru]_{\eta_2} & \cr
}
\end{xy}
\]

Sei $h_t\colon X \rightarrow P_\xi$ mit $0\leq t\leq1$ eine Homotopie zwischen $\eta_1|_{X}$ und $\eta_2|_{X}$.\\ Aus der Kompaktheit von $X$ folgt wie oben die Existenz einer offenen Umgebung $U$ in der offenen Menge $U_1\cap U_2$, so dass $X$ abgeschlossen in $U$ ist.\\
Definiere die abgeschlossene Teilmenge \[
C:=\left(X\times I\right)\cup\left( U \times\left\{0 \right\}\right)\cup\left( U\times\left\{1\right\}\right)\subset U\times I\] und die stetige Abbildung
\[
d\colon C\rightarrow P_\xi \text{ durch } d(x,t)=\left\{\begin{array}{ll}
																										h_t(x) & \text{ für } x\in X, t\in I \cr
																										\eta_1(x) & \text{ für } x\in U\setminus X, t=0 \cr
																										\eta_2(x)  & \text{ für } x\in U\setminus X, t=1.

\end{array}\right.
\]
Diese kann mit Hilfe eines verallgemeinerten Tietzeschen Erweiterungssatzes\footnote{Wieder zu finden in \cite{MR0048821} (Thm 5.5).} zu einer Abbildung $D\colon U\times I\rightarrow P_\xi$ erweitert werden. Diese ist die gewünschte Homotopie zwischen $\eta_1|_{U}$ und $\eta_2|_{U}$.\\ $\vartheta$ ist also ein stark kofinaler Funktor.

\end{bew}

\section{Definitionen der \v{C}ech-Homologie}
Es gibt zwei verschiedene Arten, die \v{C}ech-Homologie zu definieren, welche für viele Räume isomorph zueinander sind. Diese sollen nun zunächst eingeführt werden. Die erste und klassische Variante basiert auf \cite{MR0050886}, die zweite auf \cite{MR1335915}. Sind Koeffizienten nicht explizit angegeben, so sind Homologiegruppen mit Koeffizienten in $\mathbb Z$ gemeint.

\subsection{\v{C}ech-Homologie (Variante 1)}
Für ein topologisches Raumpaar $(X,A)$ sei $Cov(X,A)$ die Menge aller offenen Überdeckungen des Paares, das heißt die Menge aller offenen Überdeckungen $\mathcal{U}$ von $(X,A)$, so dass es $\mathcal{V}\subset\mathcal{U}$ mit $A\subset\bigcup\limits_{V\in\mathcal{V}}V$ gibt. Wir schreiben für eine solche Überdeckung auch $(\mathcal{U},\mathcal{V})$.\\ Diese Menge ist quasi-geordnet durch die Relation der Verfeinerung von Überdeckungen:\\
Wir nennen eine Überdeckung $(\mathcal{U},\mathcal{V})$ feiner als eine Überdeckung $(\mathcal{W},\mathcal{Z})$, falls jede Menge aus $\mathcal{U}$ in einer Menge aus $\mathcal{W}$ enthalten ist und zudem jede Menge aus $\mathcal{V}$ in einer Menge aus $\mathcal{Z}$ enthalten ist. Wir notieren dann $(\mathcal{W},\mathcal{Z})<(\mathcal{U},\mathcal{V})$.

\begin{lem}\label{gercov}
$Cov(X,A)$ ist mit dieser Relation sogar gerichtet.
\end{lem}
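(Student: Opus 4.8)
Der Plan ist, als gemeinsame Verfeinerung zweier \"Uberdeckungen schlicht die \"Uberdeckung durch paarweise Durchschnitte zu verwenden. Seien also $(\mathcal U_1,\mathcal V_1)$ und $(\mathcal U_2,\mathcal V_2)$ zwei Elemente von $Cov(X,A)$. Ich w\"urde setzen
\[
\mathcal U:=\{U_1\cap U_2\mid U_1\in\mathcal U_1,\ U_2\in\mathcal U_2\},\qquad
\mathcal V:=\{V_1\cap V_2\mid V_1\in\mathcal V_1,\ V_2\in\mathcal V_2\},
\]
und anschlie\ss{}end nachweisen, dass $(\mathcal U,\mathcal V)$ wieder in $Cov(X,A)$ liegt und feiner als beide Ausgangs\"uberdeckungen ist.

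Zun\"achst pr\"ufe ich, dass $(\mathcal U,\mathcal V)$ \"uberhaupt eine \"Uberdeckung des Paares ist. Jede Menge $U_1\cap U_2$ ist als Durchschnitt zweier offener Mengen offen, und $\mathcal U$ \"uberdeckt $X$: zu $x\in X$ w\"ahle man $U_i\in\mathcal U_i$ mit $x\in U_i$ (m\"oglich, da beide $\mathcal U_i$ den Raum $X$ \"uberdecken), dann ist $x\in U_1\cap U_2$. Genauso \"uberdeckt $\mathcal V$ die Menge $A$: zu $a\in A$ findet man wegen $A\subset\bigcup_{V\in\mathcal V_i}V$ je ein $V_i\in\mathcal V_i$ mit $a\in V_i$, also $a\in V_1\cap V_2$. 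Da wegen $\mathcal V_i\subset\mathcal U_i$ jedes $V_1\cap V_2$ von der Gestalt $U_1\cap U_2$ ist, gilt $\mathcal V\subset\mathcal U$, womit $(\mathcal U,\mathcal V)\in Cov(X,A)$ gezeigt w\"are.

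Die Gerichtetheit folgt dann unmittelbar aus der elementaren Inklusion $U_1\cap U_2\subset U_i$: Jede Menge aus $\mathcal U$ ist in einer Menge aus $\mathcal U_1$ (n\"amlich in $U_1$) und ebenso in einer Menge aus $\mathcal U_2$ enthalten, und dasselbe Argument mit $V_1\cap V_2\subset V_i$ liefert die entsprechende Aussage f\"ur $\mathcal V$ gegen\"uber $\mathcal V_1$ und $\mathcal V_2$. Nach Definition der Verfeinerungsrelation bedeutet dies gerade $(\mathcal U_1,\mathcal V_1)<(\mathcal U,\mathcal V)$ und $(\mathcal U_2,\mathcal V_2)<(\mathcal U,\mathcal V)$, also ist $(\mathcal U,\mathcal V)$ die in der Definition einer gerichteten Menge geforderte obere Schranke.

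Eine ernsthafte Schwierigkeit erwarte ich nicht; der einzige Punkt, der etwas Aufmerksamkeit verlangt, ist die Vertr\"aglichkeit der beiden Komponenten: Man muss die $\mathcal V$-Komponente konsequent aus Durchschnitten von Mengen aus $\mathcal V_1$ und $\mathcal V_2$ (und nicht etwa aus beliebigen Mengen von $\mathcal U_1,\mathcal U_2$) bilden, damit gleichzeitig $\mathcal V\subset\mathcal U$ erhalten bleibt und die Verfeinerung auch in der zweiten Komponente besteht.
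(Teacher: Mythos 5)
Ihr Beweis verfolgt exakt denselben Ansatz wie die Arbeit: Als gemeinsame obere Schranke wird die \"Uberdeckung aus den paarweisen Durchschnitten $(\mathcal W,\mathcal Z)$ mit $\mathcal W=\{U_1\cap U_2\}$ und $\mathcal Z=\{V_1\cap V_2\}$ gew\"ahlt, und die Verfeinerungsrelation folgt aus $U_1\cap U_2\subset U_i$. Ihre Ausf\"uhrung ist korrekt und sogar etwas ausf\"uhrlicher als die der Arbeit, da Sie zus\"atzlich explizit nachpr\"ufen, dass $(\mathcal W,\mathcal Z)$ wieder in $Cov(X,A)$ liegt.
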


\begin{bew}
Seien $(\mathcal{U}_1,\mathcal{V}_1)$ und $(\mathcal{U}_2,\mathcal{V}_2)$ aus $Cov(X,A)$ gegeben. Definiere $(\mathcal{W},\mathcal{Z})$ durch \linebreak[4] $\mathcal{W}=\left\{U_1\cap U_2 \mid \text{ $U_1 \in \mathcal{U}_1$ und $U_2\in\mathcal{U}_2$} \right\}$ und $\mathcal{Z}=\left\{V_1\cap V_2 \mid \text{ $V_1 \in \mathcal{V}_1$ und $V_2\in\mathcal{V}_2$ }\right\}$. Diese Überdeckung erfüllt $(\mathcal{U}_1,\mathcal{V}_1)<(\mathcal{W},\mathcal{Z})$ und $(\mathcal{U}_2,\mathcal{V}_2)<(\mathcal{W},\mathcal{Z})$.
\end{bew}

\begin{defi}[Nerv einer Überdeckung]
Wir definieren den Nerv $\nu(\mathcal{U},\mathcal{V})$ einer Überdeckung $(\mathcal{U},\mathcal{V})\in Cov(X,A)$ als das simpliziale Paar $(\nu\mathcal{U},\nu_A\mathcal{V})$.\\ Hierbei ist $\nu\mathcal{U}$ der Simplizialkomplex, dessen Ecken die nichtleeren Elemente $U\in\mathcal{U}$ sind, dessen $n-$Simplizes also $(n+1)-$Tupel $(U_0,U_1,\ldots,U_n)$ mit $U_i\in\mathcal{U}$ und $\bigcap\limits_{i}U_i\neq\emptyset$ sind. Dieser beinhaltet einen Unterkomplex  $\nu_A\mathcal{V}$ bestehend aus den Simplizes $s$ von $\nu\mathcal{U}$, für die \[{\mathit A}\; \cap \bigcap\limits_{V \text{ Ecke von } s}V\neq \emptyset \text{ gilt} .\]
Im Folgenden stehe $|\nu(\mathcal{U},\mathcal{V})|:=(|\nu\mathcal{U}|,|\nu_A\mathcal{V}|)$ für die geometrische Realisierung eines Nervs $\nu(\mathcal{U},\mathcal{V})$.
\end{defi}

Um den erwünschten Kofunktor zu definieren, dessen Limes die \v{C}ech- Homologie ist, fehlt jetzt noch eine Zutat:

\begin{defi}[Projektion]\label{proj}
Sind $(\mathcal{U},\mathcal{V})<(\mathcal{W},\mathcal{Z})$ zwei Überdeckungen in $Cov(X,A)$, so heißt eine Funktion $p\colon(\mathcal{W},\mathcal{Z})\rightarrow(\mathcal{U},\mathcal{V})$ Projektion, falls $p(W)\supset W$ für alle $W\in\mathcal{W}$ gilt. Aufgefasst als Eckenabbildung definiert eine Projektion eine eindeutige simpliziale Abbildung $\nu(\mathcal{W},\mathcal{Z})\rightarrow\nu(\mathcal{U},\mathcal{V})$, welche wir auch mit $p$ bezeichnen. 
\end{defi}

Die zu einer Relation $(\mathcal{U},\mathcal{V})<(\mathcal{W},\mathcal{Z})$ in $Cov(X,A)$ gehörigen Projektionen haben eine sehr nützliche Eigenschaft:

\begin{lem} \label{projhom}
Ist eine Relation $(\mathcal{U},\mathcal{V})<(\mathcal{W},\mathcal{Z})$ in $Cov(X,A)$ gegeben, dann ist die von einer Projektion $(\mathcal{W},\mathcal{Z})\rightarrow(\mathcal{U},\mathcal{V})$ in Homologie induzierte Abbildung \[ H_{\ast}^{sing}(|\nu(\mathcal{W},\mathcal{Z})|)\rightarrow H_{\ast}^{sing}(|\nu(\mathcal{U},\mathcal{V})|)
\] von der Wahl der Projektion unabhängig. Einer Relation in $Cov(X,A)$ kann also eindeutig eine Abbildung zwischen den Homologiegruppen der zugehörigen Nerven zugeordnet werden. 
\end{lem}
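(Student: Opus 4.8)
The plan is to reduce the statement to the homotopy invariance of singular homology, by showing that any two projections $p,q\colon(\mathcal{W},\mathcal{Z})\rightarrow(\mathcal{U},\mathcal{V})$ induce simplicial maps of pairs whose geometric realizations $|p|,|q|$ are homotopic. The crucial combinatorial observation is that the two induced simplicial maps are \emph{contiguous}, and this is forced directly by the defining inclusion property $p(W)\supset W$ from Definition \ref{proj}.

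First I would verify contiguity. Let $s=(W_0,\ldots,W_n)$ be a simplex of $\nu\mathcal{W}$, so that $\bigcap_i W_i\neq\emptyset$. Since $p(W_i)\supset W_i$ and $q(W_i)\supset W_i$ for every $i$, I obtain
\[
\bigcap_i p(W_i)\;\cap\;\bigcap_i q(W_i)\;\supset\;\bigcap_i W_i\;\neq\;\emptyset ,
\]
so the collection $\{p(W_0),\ldots,p(W_n),q(W_0),\ldots,q(W_n)\}$ has nonempty common intersection and therefore spans a simplex of $\nu\mathcal{U}$; this is exactly contiguity of $p$ and $q$. Intersecting additionally with $A$ on both sides shows that whenever $s$ lies in the subcomplex $\nu_A\mathcal{Z}$ (i.e.\ $A\cap\bigcap_i W_i\neq\emptyset$), the spanning simplex lies in $\nu_A\mathcal{V}$; hence $p$ and $q$ are contiguous \emph{as maps of pairs}.

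Second, I would pass to geometric realizations via the straight-line homotopy: for $\alpha\in|\nu\mathcal{W}|$ and $t\in I$ set $H(\alpha,t):=(1-t)\,|p|(\alpha)+t\,|q|(\alpha)$. Since $\alpha$ is supported on a single simplex $s$, the points $|p|(\alpha)$ and $|q|(\alpha)$ are supported on $p(s)$ and $q(s)$ respectively, and by contiguity these are contained in one common simplex of $\nu\mathcal{U}$; hence the convex combination is a genuine point of $|\nu\mathcal{U}|$ and $H$ is well-defined. By the pair version of contiguity, $H$ carries $|\nu_A\mathcal{Z}|\times I$ into $|\nu_A\mathcal{V}|$, so $H$ is a homotopy of maps of pairs between $|p|$ and $|q|$.

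Finally, homotopy invariance of relative singular homology yields $|p|_{\ast}=|q|_{\ast}$ on $H_{\ast}^{sing}(|\nu(\mathcal{W},\mathcal{Z})|)$, which is precisely the asserted independence of the choice of projection; the resulting homomorphism can thus be attached to the relation $(\mathcal{U},\mathcal{V})<(\mathcal{W},\mathcal{Z})$ itself. I expect the main obstacle to be the technical verification that $H$ is continuous with respect to the weak topology on the realizations—the coverings, and hence the nerves, may be infinite, so one checks continuity on each product $|s|\times I$ and then invokes the defining property of the weak topology—rather than the algebra, which is immediate once contiguity is established.
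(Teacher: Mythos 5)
Your proposal is correct and follows essentially the same route as the paper: establish that the two projections are contiguous (as maps of pairs) directly from $p(W)\supset W$, then use the straight-line homotopy on the realizations and homotopy invariance of singular homology. Your remark on verifying continuity of $H$ via the weak topology on each $|s|\times I$ is a slightly more careful spelling-out of the point the paper dispatches in one sentence, but it is the same argument.
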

\begin{bew}
Seien zwei Projektionen $p,p^{\prime}\colon(\mathcal{W},\mathcal{Z})\rightarrow(\mathcal{U},\mathcal{V})$ gegeben. Sei weiter $s$ ein Simplex in $\nu(\mathcal{W},\mathcal{Z})$ und $x\in\bigcap\limits_{\text{\tiny $W$ Ecke von } s}W$ ein Punkt. Das heißt, dass $x\in W$ ist für alle Ecken $W$ von $s$.\\ $p,p^{\prime}$ sind Projektionen, damit gilt also $x\in p(W)$ und $x\in p^{\prime}(W)$ für alle Ecken $W$ von $s$. Daraus folgt, dass
 \[
 x\in \left(\bigcap_{\text{\tiny $W$ Ecke von } s} p(W)\cap \bigcap_{\text{\tiny $W$ Ecke von  } s } p^\prime(W)\right)\neq\emptyset
\]ist.
 Der Simplex $s^{\prime}$, der von den Bildern der Ecken von $s$ unter den beiden Projektionen $p,p^{\prime}$ erzeugt wird, ist also ein Simplex in $\nu(\mathcal{U},\mathcal{V})$. Das heißt, dass $p(s)$ und $p^{\prime}(s)$ Seiten eines Simplex $s^{\prime}$ in $\nu(\mathcal{U},\mathcal{V})$ sind. (Falls $s$ in $\nu_A\mathcal{Z}$ ist, wähle $x\in A\cap\bigcap\limits_{\text{\tiny $W$ Ecke von }s}W$. Wie eben zeigt man, dass $s^{\prime}$ in $\nu_A\mathcal{V}$ liegt.) Simpliziale Abbildungen mit dieser Eigenschaft, dass die jeweiligen Bilder eines Simplex Seiten desselben Simplexes im Bildkomplex sind, heißen zueinander "`benachbart"'. Das Besondere ist, dass die zugehörigen Abbildungen zwischen den geometrischen Realisierungen homotop sind, denn $h\colon|\nu(\mathcal{W},\mathcal{Z})|\times I\rightarrow|\nu(\mathcal{U},\mathcal{V})|$ definiert durch $h(x,t)=(1-t)p(x)+tp^{\prime}(x)$ ist eine Homotopie zwischen $p$ und $p^{\prime}$ (aufgefasst als stetige Abbildungen zwischen den geometrischen Realisierungen.). Denn da $p(x)$ und $p^\prime(x)$ im selben Simplex $|s^\prime|$ liegen für alle $x$ aus einem Simplex $|s|$, ist die Abbildung $h$ stetig.\\ Das liefert auf Grund der Homotopieinvarianz der singulären Homologie die Behauptung.
\end{bew}

Nun ist es möglich, folgende Definition sinnvoll zu formulieren:

\begin{defi}[\v{C}ech-Homologie mit Überdeckungen]
Sei $(X,A)$ ein kompaktes topologisches Raumpaar und sei für $k\in\mathbb N$ ein Kofunktor \[J\colon Cov(X,A)\rightarrow\mathcal{A}\mathcal{B}\]  von der Kategorie der Überdeckungen in die Kategorie der abelschen Gruppen definiert durch\[
(\mathcal{U},\mathcal{V})\mapsto H_{k}^{sing}(|\nu(\mathcal{U},\mathcal{V})|);\]\[ \left((\mathcal{U},\mathcal{V})<(\mathcal{W},\mathcal{Z})\right)\mapsto\left(H_{k}^{sing}(p)\colon H_{k}^{sing}(|\nu(\mathcal{W},\mathcal{Z})|)\rightarrow H_{k}^{sing}(|\nu(\mathcal{U},\mathcal{V})|)\right).\] 
Dann ist die $k$-te relative \v{C}ech-Homologiegruppe des Raumpaares $(X,A)$ definiert durch \[
\check{H}_{k}(X,A)=lim (J).\]
\end{defi}

Der beschriebene Kofunktor ist ein inverses System in die Kategorie der abelschen Gruppen, dessen Limes also nach Satz \ref{invlim} existiert. Natürlich kann auch insbesondere die absolute \v{C}ech-Homologiegruppe eines kompakten topologischen Raumes $X$ betrachtet werden, indem $(X,A)=(X,\emptyset)$ gesetzt wird.

\subsection{\v{C}ech-Homologie (Variante 2)}

In Definition $\ref{umgkat}$ haben wir die Kategorie der Umgebungspaare $\Omega_{(X,A)}$ bereits kennen gelernt. Für die alternative Definition der \v{C}ech-Homologie wird sie erneut benötigt:

\begin{defi}[\v{C}ech-Homologie mit Umgebungen]
Sei $(X,A)$ ein kompaktes Raumpaar in einem kompakten euklidischen Umgebungsretrakt $\mathrm E$. Definiere für $k\in \mathbb N$ den Kofunktor \[K\colon\Omega_{(X,A)}\rightarrow\mathcal{AB}\] von der Kategorie der offenen Umgebungspaare von $(X,A)$ in $\mathrm E$ in die Kategorie der abelschen Gruppen durch\[\begin{array}{rll}
(U,V)&\mapsto& H_{k}^{sing}(U,V) \cr 
\left((U,V)<(W,Z)\right) &\mapsto & \left(H_{k}^{sing}(i)\colon H_{k}^{sing}(W,Z)\rightarrow H_{k}^{sing}(U,V)\right),\cr
\end{array}
\]wobei $i\colon (W,Z) \hookrightarrow (U,V)$ die Inklusion bezeichne.
Dann ist die $k-$te relative \v{C}ech-Homologiegruppe mit Umgebungen definiert durch:\[
\check{H}_{k}^{\mathpzc{U}}(X,A)=lim(K).
\]
\end{defi} 

Auch hier gilt wegen Satz \ref{invlim}, dass der Limes des inversen Systems, das durch den Funktor $K$ definiert wird, existiert und dass insbesondere auch die absoluten \v{C}ech-Homologiegruppen eines kompakten Raumes $X$ betrachtet werden können.

\section{Isomorphie - der absolute Fall}

Da wir Satz \ref{1.9} benutzen wollen, um die Isomorphie der beiden Definitionen zu zeigen, müssen wir uns noch um die (starke) Kofinalität eines Funktors kümmern. Als erstes müssen wir uns dafür eine Teilmenge der Überdeckungen $Cov(X)$ näher anschauen, auf der der Funktor definiert sein wird. Diese Teilmenge ist die der nummerierten Überdeckungen, welche in \cite{MR1335915} eingeführt werden. Auch die Definition des Nummerierungsfunktors und der Beweis seiner Kofinalität sind an \cite{MR1335915} angelehnt.

\begin{defi}[Nummerierte Überdeckungen]
Definiere $Cov^n(X)\subset Cov(X)$ als die Menge der offenen Überdeckungen $\mathcal{U}$ eines topologischen Raumes $X$, für die eine lokal endliche Partition der Eins $\pi=\left\{\pi_U\right\}_{U\in\mathcal{U}}$ existiert ( also stetige Abbildungen $\pi_U\colon X\rightarrow [0,1]$ mit $\sum_U\pi_U(x)=1$ für alle $x\in X$ ), so dass $\pi_U^{-1}(0,1]\subset U$ für alle $U\in\mathcal{U}$ gilt. Wir nennen $\pi$ eine Nummerierung von $\mathcal{U}$ und $\mathcal{U}$ heißt dann nummeriert.
\end{defi} 

Genau wie $Cov(X)$ ist $Cov^n(X)$ teilweise geordnet durch Verfeinerung und kann ebenso als Kategorie aufgefasst werden. Der folgende Beweis folgt im Wesentlichen \cite{MR0500780}(Thm 5.1.9).

\begin{lem}
Sei $X$ lokal kompakte Teilmenge eines euklidischen Umgebungsretraktes $\mathrm E$. Dann sind die Mengen $Cov^n(X)$ und $Cov(X)$ gleich.
\end{lem}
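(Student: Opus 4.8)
The plan is to prove the non-trivial inclusion $Cov(X)\subseteq Cov^n(X)$, the reverse inclusion being immediate since a numbered covering is in particular an open covering. So let an arbitrary open covering $\mathcal{U}\in Cov(X)$ be given; the task is to produce a locally finite partition of unity $\{\pi_U\}_{U\in\mathcal{U}}$, indexed by $\mathcal{U}$ itself, with $\pi_U^{-1}(0,1]\subset U$ for every $U\in\mathcal{U}$. In other words, the real content of the lemma is that $X$ is paracompact and Hausdorff.

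To establish paracompactness I would first record, exactly as in the proof of Satz~\ref{2.4}, that the euclidean neighborhood retract $\mathrm E$ is homeomorphic to a subset $Y\subset\mathbb R^n$; composing with this homeomorphism embeds $X$ as a subspace of $\mathbb R^n$. Hence $X$ is metrizable and second countable, in particular regular and Lindel\"of, and therefore paracompact. (In fact metrizability alone already yields paracompactness by the theorem of A.\,H.\ Stone, so the local compactness hypothesis is not essential for this route; it would, however, permit a more hands-on construction via a compact exhaustion $X=\bigcup_k K_k$, which may be the path taken in the cited \cite{MR0500780}(Thm~5.1.9).)

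From paracompactness I would extract a locally finite open refinement $\mathcal{W}$ of $\mathcal{U}$ together with a locally finite partition of unity $\{\psi_W\}_{W\in\mathcal{W}}$ satisfying $\operatorname{supp}\psi_W\subset W$, by the standard shrinking-plus-Urysohn argument for paracompact Hausdorff spaces; this is the topological input borrowed from the cited theorem. The only genuine subtlety is then the re-indexing onto the original index set $\mathcal{U}$: since $\mathcal{W}$ refines $\mathcal{U}$, choose for each $W\in\mathcal{W}$ some $U(W)\in\mathcal{U}$ with $W\subset U(W)$, and set $\pi_U:=\sum_{U(W)=U}\psi_W$.

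Finally I would verify that $\{\pi_U\}_{U\in\mathcal{U}}$ has the required properties. The pointwise sums are well defined and add to $1$ because $\{\psi_W\}$ does; local finiteness is inherited, since any neighbourhood meeting only finitely many $\operatorname{supp}\psi_W$ meets only finitely many $\operatorname{supp}\pi_U$; and $\pi_U^{-1}(0,1]\subset\bigcup_{U(W)=U}W\subset U$, because $\psi_W^{-1}(0,1]\subset W$ for each $W$. Thus $\mathcal{U}$ is numbered, which gives $Cov(X)\subseteq Cov^n(X)$ and hence equality. The main obstacle is not topological but purely a matter of bookkeeping, namely checking that this fibre-wise summation preserves local finiteness and keeps the family subordinate to $\mathcal{U}$ with the same index set; the substantive ingredient, the existence of a subordinate locally finite partition of unity, is furnished at once by the embedding of $X$ into $\mathbb R^n$.
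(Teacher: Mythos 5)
Your proof is correct and follows essentially the same route as the paper: both arguments reduce the statement to paracompactness of $X$, invoke a shrinking plus Urysohn's lemma to get a subordinate locally finite family of functions, and then re-index along the refinement map back onto $\mathcal{U}$. The only cosmetic difference is where the re-indexing happens --- the paper first forms the closed sets $F_U=\bigcup_{U_V=U}\overline{V}$ and then builds one Urysohn function $g_U$ per $U\in\mathcal{U}$ (normalizing by $g=\sum_U g_U$ at the end), whereas you take a partition of unity subordinate to the locally finite refinement as a black box and push it forward by the fibre-wise sums $\pi_U=\sum_{U(W)=U}\psi_W$; your verification of local finiteness and of $\pi_U^{-1}(0,1]\subset U$ is sound.
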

\begin{bew}
Zu zeigen ist, dass jede offene Überdeckung von $X$ eine Nummerierung besitzt. $X$ ist als lokal kompakter Teilraum eines euklidischen Umgebungsretraktes insbesondere parakompakt.\footnote{Siehe zum Beispiel \cite{MR1016814}(Theorem 2-66).} Das bedeutet, dass es für jede offene Überdeckung des Raumes eine offene, lokal endliche Verfeinerung gibt. Es ist möglich zu zeigen, dass für solche Räume jede offene Überdeckung eine nummerierte Überdeckung ist:\\ 
Sei $\mathcal{U}$ in $Cov(X)$ eine offene Überdeckung. Um die Teilung der Eins zu bekommen, wird zunächst eine abgeschlossene, lokal endliche Überdeckung, die eine Verfeinerung von $\mathcal{U}$ ist, benötigt. Die Konstruktion dieser funktioniert so: Da $X$ als Teilmenge eines euklidischen Umgebungsretraktes regulär ist, gibt es eine offene Überdeckung $\mathcal{W}$ von $X$ , so dass $\left\{\overline{W}, W \in\mathcal{W}\right\}$ eine Verfeinerung von $\mathcal{U}$ ist.\\
Wegen der Parakompaktheit ist es möglich, $\mathcal W$ lokal endlich zu verfeinern. Wählt man so eine lokal endliche Verfeinerung $\mathcal{V}$ der Überdeckung $\mathcal{W}$ und für alle $V\in \mathcal{V}$ ein $U_V\in\mathcal{U}$, so dass $\overline{V}\subset U_V$ gilt, so ist $F_U=\bigcup\limits_{U_V= U}{\overline{V}}$ eine abgeschlossene lokal endliche Überdeckung von $X$, welche per Definition eine Verfeinerung von $\mathcal{U}$ ist. Da die Mengen der Überdeckung abgeschlossen sind, gibt es für alle $U\in \mathcal{U}$ nach Urysohns Lemma\footnote{Siehe zum Beispiel \cite{MR1224675}(Lemma 10.2).} eine stetige Funktion \[g_U\colon X\rightarrow [0,1] \text{ , so dass } g_U(x)=0 \text{ für } x\in X\setminus U \text{ und } g_U=1 \text{ für } x\in F_U .\] 
Definiert man $g(x)=\sum\limits_{U\in\mathcal{U}}{g_U(x)}$, so liefert das wegen der lokalen Endlichkeit der Partition $\left\{F_U\right\}_{U\in\mathcal{U}}$ eine stetige Funktion $g\colon X\rightarrow \mathbb{R}$. Nun kann eine lokal endliche Partition der Eins $\left\{\pi_U:X\rightarrow [0,1]\right\}_{U\in\mathcal{U}}$ durch $\pi_U=g_U/g$ definiert werden. Diese erfüllt per Definition die Bedingung $\pi_U^{-1}(0,1]\subset U$ für alle $U\in \mathcal{U}$.  \\
\end{bew}

\begin{bem}
In \cite{MR0500780} wird sogar gezeigt, dass Parakompaktheit und die Eigenschaft, dass jede offene Überdeckung nummeriert ist, äquivalent zueinander sind. Das wird hier aber nicht benötigt. 
\end{bem}

Wir sind nun bereit den gewünschten Funktor zu definieren.

\begin{lem}[Nummerierungs-Funktor]\label{num-funkt}
 Sei $X$ ein kompakter topologischer Teilraum eines kompakten euklidischen Umgebungsretraktes. Definiere wie im Folgenden beschrieben einen Funktor $\Theta\colon Cov^n(X)=Cov(X)\rightarrow \Lambda_{X}$, wobei $\Lambda_{X}^{op}=\mathscr{P}^{X}$ ist. Nehme hierfür die folgende Abbildungsvorschrift:\[\begin{array}{rcl}
\mathcal{U}& \mapsto & \left[\pi\colon X\rightarrow |\nu\mathcal{U}|\right]\cr
(\mathcal{V}<\mathcal{U})&\mapsto & \left[p\colon|\nu\mathcal{U}|\rightarrow|\nu\mathcal{V}|\right],\end{array}
\] wobei $p$ die in \ref{proj} beschriebene, bis auf Homotopie eindeutige Projektion bezeichnet und $\pi\colon X\rightarrow |\nu\mathcal{U}|$ dadurch definiert wird, dass es $x\in X$ auf den Punkt abbildet, dessen baryzentrische Koordinaten  die $\left\{\pi_U(x)\right\}_{U\in\mathcal U}$ sind. ($\left\{\pi_U\right\}_{U\in\mathcal{U}}$  ist eine Nummerierung von $\mathcal{U}$.)\\
Dieser Funktor wird auch Nummerierungs-Funktor genannt.
\end{lem}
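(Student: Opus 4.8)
The plan is to verify the three things a functor into $\Lambda_{X}=(\mathscr{P}^{X})^{op}$ requires: that $\Theta$ sends each object $\mathcal{U}$ to a genuine object of $\mathscr{P}^{X}$ (a well-defined homotopy class of maps from $X$ into a polyhedron), that it sends each relation $\mathcal{V}<\mathcal{U}$ to a genuine morphism of $\Lambda_{X}$, and that it respects identities and composition. Throughout, the workhorse is the observation that a convex combination of barycentric-coordinate vectors whose supports all contain a fixed point $x$ is again supported on a simplex of the relevant nerve, so that straight-line homotopies never leave the realizations in which they are formed.

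First I would treat the objects. For a fixed numbering $\{\pi_{U}\}_{U\in\mathcal{U}}$ the assignment $x\mapsto\{\pi_{U}(x)\}_{U}$ is continuous: by local finiteness every point of $X$ has a neighbourhood on which all but finitely many $\pi_{U}$ vanish, so there $\pi$ factors through a finite subcomplex, on which by \ref{statop} the weak and strong topologies agree and on which $\pi$ is visibly continuous; continuity into $|\nu\mathcal{U}|$ with its weak topology follows. The image lies in $|\nu\mathcal{U}|$ since the sets $U$ with $\pi_{U}(x)\neq 0$ all contain $x$ (because $\pi_{U}^{-1}(0,1]\subset U$), whence their intersection contains $x$ and they span a simplex. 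As $|\nu\mathcal{U}|$ is by definition the realization of a simplicial complex, it is a polyhedron, so $[\pi\colon X\to|\nu\mathcal{U}|]$ is an object of $\mathscr{P}^{X}$. Independence of the chosen numbering is the key well-definedness point: given two numberings $\pi,\pi'$, the formula $(x,t)\mapsto(1-t)\pi(x)+t\pi'(x)$ is a homotopy, and it is valued in $|\nu\mathcal{U}|$ because for every $t$ the support of the combined coordinate vector still consists of sets containing $x$.

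Next I would treat the morphisms. A relation $\mathcal{V}<\mathcal{U}$ is sent to the homotopy class of a projection $p\colon|\nu\mathcal{U}|\to|\nu\mathcal{V}|$ in the sense of Definition \ref{proj}; that this class is independent of the chosen projection is exactly the neighbouring-maps argument carried out in the proof of Lemma \ref{projhom} (two projections send each simplex to faces of a common simplex, so the straight-line homotopy between them stays inside the realization). To see that $[p]$ really is a morphism $\Theta\mathcal{V}\to\Theta\mathcal{U}$ in $\Lambda_{X}$, i.e.\ a morphism $\Theta\mathcal{U}\to\Theta\mathcal{V}$ in $\mathscr{P}^{X}$, one must check the compatibility $p\circ\pi_{\mathcal{U}}\simeq\pi_{\mathcal{V}}$. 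This I would again prove by a straight-line homotopy: for each $x$, the point $p(\pi_{\mathcal{U}}(x))$ and the point $\pi_{\mathcal{V}}(x)$ both have barycentric coordinates supported on members of $\mathcal{V}$ containing $x$, so the two points lie in a \emph{common} simplex of $\nu\mathcal{V}$ and the linear homotopy between them is admissible.

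Finally, functoriality is comparatively formal. The identity relation $\mathcal{U}<\mathcal{U}$ is realized by the identity projection, giving the identity morphism. For composition, given $\mathcal{W}<\mathcal{V}<\mathcal{U}$ with projections $p_{1}\colon|\nu\mathcal{U}|\to|\nu\mathcal{V}|$ and $p_{2}\colon|\nu\mathcal{V}|\to|\nu\mathcal{W}|$, the composite $p_{2}p_{1}$ is again a projection from $\mathcal{U}$ to $\mathcal{W}$, so by the homotopy-uniqueness of projections it agrees up to homotopy with the projection assigned to $\mathcal{W}<\mathcal{U}$; one only has to keep track of the reversal of composition order imposed by passing to $\Lambda_{X}=(\mathscr{P}^{X})^{op}$. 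I expect the main obstacle to be the compatibility condition $p\circ\pi_{\mathcal{U}}\simeq\pi_{\mathcal{V}}$ together with the careful verification that all the straight-line homotopies and the map $\pi$ itself are genuinely continuous and genuinely land in the nerves; once the \emph{common simplex} bookkeeping is set up cleanly, every step reduces to that single principle.
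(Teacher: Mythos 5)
Your proposal is correct and follows essentially the same route as the paper: continuity of $\pi$ via reduction to finite subcomplexes where weak and strong topologies agree, numbering-independence by the straight-line homotopy $(1-t)\pi+t\rho$, and homotopy-uniqueness of projections from Lemma \ref{projhom}. The only cosmetic difference is in the compatibility $p\circ\pi_{\mathcal{U}}\simeq\pi_{\mathcal{V}}$: the paper exhibits the pushforward numbering $\phi_V(x)=\sum_{p[U]=[V]}\pi_U(x)$, for which $p\pi_{\mathcal{U}}=\phi$ on the nose, and then invokes numbering-independence, whereas you run the common-simplex straight-line homotopy directly -- these are the same homotopy, and you additionally make explicit the (routine) checks of identities and composition that the paper leaves implicit.
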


\begin{bew}
Die Abbildung $\pi\colon X\rightarrow |\nu\mathcal U|$ ist stetig, da sie in der starken Topologie\footnote{Vgl. Bemerkung \ref{statop} auf Seite \pageref{statop}.} stetig ist  und wir auf Grund der Kompaktheit von $X$ immer endliche Teilkomplexe von $|\nu\mathcal U|$ betrachten können.\\
Weiter hängt ihr Homotopietyp nicht von der Wahl einer Nummerierung $\left\{\pi_U\right\}_{U\in\mathcal{U}}$ ab, denn ist $\left\{\rho_U\right\}_{U\in\mathcal{U}}$ eine zweite Wahl, so ist $(1-t)\pi+t\rho,  0\leq t\leq1,$ eine Homotopie. Die Stetigkeit folgt wie zuvor aus der Stetigkeit in der starken Topologie.\\
Sei $\mathcal V<\mathcal{U}$ eine Relation in $Cov^n(X,A)$ und sei $\pi\colon X\rightarrow|\nu\mathcal U|$ eine zu $\Theta(\mathcal{U})$ gehörige Abbildung. Wir wissen, dass die Homotopieklasse einer Projektion $p\colon|\nu\mathcal{U}|\rightarrow|\nu\mathcal{V}|$ eindeutig bestimmt ist.\footnote{Vgl. Lemma \ref{projhom} auf  Seite \pageref{projhom}} Damit diese aber ein Morphismus in $\Lambda_{X}$ ist, muss es eine zu $\Theta(\mathcal{V})$  gehörige Abbildung $\phi$ geben, so dass $p\pi\simeq \phi$ gilt. Wähle $\left\{\phi_V\right\}_{V\in\mathcal{V}}$ folgendermaßen aus: $\phi_V(x)=\sum\limits_{p[U]=[V]}{\pi_U(x)}$. Dies ist eine Nummerierung, da die $\pi_U$ eine Nummerierung sind und $\mathcal{U}$ eine Verfeinerung von $\mathcal{V}$ ist. Sei $x\in\nu\mathcal{U}$ mit baryzentrischen Koordinaten $x_U$ in der Form $x=\sum\limits_{U\in\mathcal{U}}{x_U[U]}$ gegeben. Es ist \[
p(x)=\sum\limits_{V\in\mathcal{V}}{x_V[V]} \text{, wobei } x_V=\sum\limits_{p(U)=V}{x_U}.
\] Daher gilt $\phi\simeq p\pi$.
\end{bew}

\begin{satz}
Der Nummerierungsfunktor ist stark kofinal.
\end{satz}

\begin{bew} Zuerst muss wie immer gezeigt werden, dass $\Theta$ schwach kofinal ist. Sei hierfür $[\xi\colon X\rightarrow P_\xi]\in\Lambda_{X}$ gegeben. Es ist die Existenz einer nummerierten, offenen Überdeckung $\mathcal{U}$ von  $X$ und einer Homotopieklasse von Abbildungen $f\colon|\nu\mathcal{U}|\rightarrow P_\xi$ zu zeigen, so dass $\xi\simeq f\circ p$ gilt, wobei $p\colon X\rightarrow |\nu\mathcal U|$ die Abbildung sein soll, die $\Theta(\mathcal{U})$ definiert.\\
Gegeben eine Triangulierung von $P_\xi$ mit Eckenmenge $J$, bezeichne man die zugehörigen baryzentrischen Koordinaten mit  $\beta_j\colon P_\xi\rightarrow[0,1] ,\, j\in J$.\\
Wähle nun $\mathcal{U}$ als die Mengen $(\beta_j\xi)^{-1}(0,1]$ mit $j\in J$. Weiter ordne jedem $U\in\mathcal{U}$ den Index $j(U)\in J$ zu, so dass $U=(\beta_j\xi)^{-1}(0,1]$ ist, und definiere eine Nummerierung von $\mathcal{U}$ durch $\left\{\pi_U=\beta_{j(U)}\xi\right\}$. Für $x\in X$ ist $\sum\limits_{U\in\mathcal{U}}{\pi_U(x)}=1$, da die $\beta_j$ die baryzentrischen Koordinaten sind, und es gilt $\pi_U^{-1}(0,1]=U$ per Definition für alle $U\in\mathcal{U}$. Auch dass die Nummerierung lokal endlich ist, folgt daraus, dass sie durch die baryzentrischen Koordinaten definiert ist.\\
$\Theta(\mathcal{U})$ wird dann durch die Abbildung $p\colon X \rightarrow\nu\mathcal{U}$ mit \[p(x)=\sum\limits_{U\in\mathcal{U}}{\beta_{j(U)}\xi(x)\left[(\beta_{j(U)}\xi)^{-1}(0,1]\right]}\] bestimmt. Definiert man die gesuchte Homotopieklasse von Abbildungen eindeutig durch die simpliziale Abbildung $f\colon\nu\mathcal{U}\rightarrow P_\xi$ mit $f(U)=j(U)$ für alle $U\in\mathcal{U}$, so gilt $f\circ p\simeq\xi$.

Für die starke Kofinalität seien zu einem Objekt $[\xi\colon X\rightarrow R_\xi]$ aus $\Lambda_{X}$ zwei nummerierte, offene Überdeckungen $\mathcal{U}$ und $\mathcal{V}$ von $X$ und Homotopieklassen von Abbildungen $[f\colon|\nu\mathcal{U}|\rightarrow P_\xi]$ und $[g\colon|\nu\mathcal{V}|\rightarrow P_\xi]$ gegeben, so dass \[
f\circ\pi_{\mathcal{U}}\simeq\xi\simeq g\circ\pi_{\mathcal{V}}
\] gilt, wobei $\pi_{\mathcal{U}}$ und $\pi_{\mathcal{V}}$ jeweils die von $\Theta(\mathcal{U})$ und $\Theta(\mathcal{V})$ bestimmten Abbildungen sind. Im Diagramm sieht das (bis auf Homotopie) so aus: \[
\begin{xy}
		\xymatrix{ & |\nu\mathcal{U}| \ar[rd]^{f} & \cr
							X \ar[rr]^{\xi}\ar[ru]^{\pi_{\mathcal{U}}} \ar[rd]_{\pi_{\mathcal{V}}} & &P_\xi \cr
							 & |\nu\mathcal{V}| \ar[ru]_{g} & \cr
		}
\end{xy}
\]
Es ist nun zu zeigen, dass dieses Diagramm (bis auf Homotopie) wie folgt vervollständigt werden kann:\[\begin{xy}\xymatrix{
& |\nu\mathcal{U}| \ar[rd]^{f} & \cr
X \ar[r]^{\pi}\ar[ru]^{\pi_{\mathcal{U}}} \ar[rd]_{\pi_{\mathcal{V}}} & |\nu\mathcal{W}| \ar[u] \ar[d] &P_\xi \cr
& |\nu\mathcal{V}| \ar[ru]_{g} & \cr
}\end{xy}\]
Wir wissen schon, dass $\Theta$ schwach kofinal ist. Es reicht also, das Diagramm anstatt mit einem Nerv $\nu\mathcal{W}$ mit einem Polyeder zu vervollständigen, denn dann faktorisiert die zugehörige Abbildung in das Polyeder über einen Nerv (bis auf Homotopie).\\
Sei $h\colon[0,1]\times(X,A)\rightarrow P_\xi$ die Homotopie zwischen $f\circ\pi_{\mathcal{U}}$ und $g\circ\pi_{\mathcal{V}}$.
Defniere \[
R=\left\{(x,y,\omega)\in\nu\mathcal{U}\times\nu\mathcal{V}\times P_\xi^{[0,1]} \mid \omega(0)=f(x),\omega(1)=g(y)\right\}
\] und eine Abbildung $\varphi\colon X\rightarrow R$ durch\[
\varphi(x)=(\pi_{\mathcal{U}}(x),\pi_{\mathcal{V}}(x),\omega_x)\text{ , wobei } \omega_x \text{ durch } \omega_x(i)=h_i(x) ;\, i\in[0,1] \text{ definiert ist.} 
\]
Es resultiert das folgende (bis auf Homotopie) kommutative Diagramm:\[\begin{xy}\xymatrix{
& \nu\mathcal{U} \ar[rd]^{f} & \cr
X \ar[r]^{\varphi}\ar[ru]^{\pi_{\mathcal{U}}} \ar[rd]_{\pi_{\mathcal{V}}} & R \ar[u]_{pr_{1}} \ar[d]^{pr_2} &P_\xi \cr
& \nu\mathcal{V} \ar[ru]_{g} & \cr
}\end{xy}\]
Zu zeigen bleibt also nur, dass $R$ homotopieäquivalent zu einem Polyeder ist. Da das Produkt von Polyedern wieder homotopieäquivalent zu einem Polyeder ist und weiter die beiden ersten Faktoren, die $R$ ausmachen, Polyeder sind, bleibt der dritte Faktor zu betrachten. Dieser ist von der Form $(P_\xi;im (f), im (g))^{([0,1];[0],[1])}$, wobei $P_\xi$ ein Polyeder ist und die Bilder $im(f)$ beziehungsweise $im(g)$ der gegebenen Abbildungen durch simpliziale Approximation der Abbildungen $f$ und $g$ als Unterkomplexe von $P_\xi$ angesehen werden können. Von Räumen dieser Art wird in \cite{MR0100267}(Thm. 3) gezeigt, dass sie homotopieäquivalent zu Polyedern sind.
\end{bew}

Nun ist es möglich, die Isomorphie der beiden absoluten Definitionen der \v{C}ech-Homologie zu beweisen:

\begin{satz}\label{iso}
Sei $X$ eine kompakte Teilmenge eines kompakten euklidischen Umgebungsretraktes. Dann ist \[
\check{H}^{\mathcal{U}}_{k}(X)\cong\check{H}_{k}(X)
\] für $k \in \mathbb N$.
\end{satz}

\begin{bew}
Die angegebene Isomorphie zu zeigen, bedeutet nichts anderes, als die Isomorphie zweier Limes zu zeigen, genauer gesagt die Isomorphie von $lim(J)$ und $lim(K)$, wobei \[
J\colon Cov(X)  \rightarrow  \mathcal{AB}  \quad \text{ durch } \]
\[\begin{array}{rll}
\mathcal{U} & \mapsto & H_{k}^{sing}(|\nu\mathcal{U}|) \cr \left(\mathcal{U}<\mathcal{V}\right) & \mapsto & \left(H_{k}^{sing}(p)\colon H_{k}^{sing}(|\nu\mathcal{V}|)\rightarrow H_{k}^{sing}(|\nu\mathcal{U}|)\right)  \cr
\end{array}\] und\[
K\colon\Omega_X  \rightarrow  \mathcal{AB} \quad \text{ durch } \]
\[\begin{array}{rll}
U & \mapsto & H_{k}^{sing}(U)  \cr 
\left(U<V\right) & \mapsto & \left(H_{k}^{sing}(i)\colon H_{k}^{sing}(V)\rightarrow H_{k}^{sing}(U)\right)  \cr
\end{array}\] gegeben sind.\\
Betrachte den Hilfsfunktor $d\colon\Lambda_{X}\rightarrow \mathcal{AB}$, der durch \[
\left[\xi\colon X\rightarrow P_\xi\right]\mapsto H^{sing}_{k}(P) \]\[ \quad \left[\alpha\colon P \rightarrow Q\right]\mapsto \left(H_{k}^{sing}(\alpha)\colon H_{k}^{sing}(P)\rightarrow H_{k}^{sing}(Q)\right)
\] definiert wird.\\
Wir können zeigen, dass sowohl $lim(J)$ als auch $lim(K)$ isomorph zu $lim(d)$ sind, und das liefert die Behauptung.\\
Zunächst rufe man sich den stark kofinalen Funktor $\Theta\colon Cov^n(X)=Cov(X)\rightarrow \Lambda_{X}$ (den Nummerierungsfunktor) in Erinnerung und stelle fest, dass
\begin{eqnarray*} 
J= d\circ \Theta
\end{eqnarray*} ist.
Mit Satz \ref{1.9} folgt dann \[lim(J)\cong lim(d),\] da $\Theta$ stark kofinal ist.
In Satz \ref{2.4} wurde die starke Kofinalität des Funktors $\vartheta\colon\Omega_X\rightarrow \Lambda_{X}$ mit \[ 
U\mapsto \left[X\hookrightarrow U\right]; \qquad (U<V)\mapsto \left[V\hookrightarrow U\right]
\] bewiesen. Da \[K=d\circ\vartheta\] ist, folgt erneut mit Satz \ref{1.9}, dass gilt: \[lim(K)\cong lim(d).\] 
\end{bew}

\section{Der relative Fall}

Die klassische Variante der \v{C}ech-Homologie, die über die Nerven der Überdeckungen de\-fi\-niert ist, hängt im Gegensatz zu der Definition, die die Umgebungen benutzt, nicht vom umgebenden Raum ab. Die Isomorphie der beiden liefert also insbesondere das Resultat, dass die Umgebungs-\v{C}ech-Homologie des Raumes $X$ nur von diesem abhängt.\\ 
Für kompakte Raumpaare $(X,A)$ gilt dann\[
\check{H}_k(X,A)\cong lim\left\{H^{sing}_k(X,V) | V \text{ offene Umgebung von $A$ }\right\}.
\]
Um diese Unabhängigkeit vom umgebenden Raum im relativen Fall zu bekommen, kann ein Standard-Trick angewendet werden, der die relative Homologie auf die absolute zu\-rück\-führt. Betrachte zu diesem Zweck für ein kompaktes Raumpaar $(X,A)$ eines kompakten eu\-kli\-disch\-en Umgebungsretraktes $\mathrm E$ als Hilfsraum $X\cup CA$, wobei $CA=(A\times[0,1])/(A\times \left\{1\right\})$ den Kegel von $A$ bezeichnet und wir $A\times\left\{0\right\}\subset CA$ in $X\cup CA$  mit $A\subset X$ identifizieren. Dies ist der sogenannte Abbildungskegel der Inklusionsabbildung $A\hookrightarrow X$. Er liegt kompakt im Kegel des umgebenden euklidischen Umgebungsretraktes, welcher wegen der Kompaktheit des ursprünglichen euklidischen Umgebungsretraktes selbst ein kompaktes euklidisches Umgebungsretrakt ist\footnote{Siehe zum Beispiel \cite{MR1224675} Korollar E.7.}.\\ 
Definiere nun eine alternative Umgebungs-\v{C}ech-Homologie von $X\cup CA\subset C\mathrm E$, in der nicht alle offenen Umgebungen bei der Bildung des Limes berücksichtigt werden, sondern nur solche von der Form $\bf{U}$$ \cup CV$. Hierbei bezeichnet $\bf U$ eine offene Umgebung von $X$ in $U\times [0,\epsilon]$ für ein $\epsilon>0$, wobei $U$ eine offene Umgebung von $X$ in $\mathrm E$ ist. Desweiteren sei $V$ eine offene Umgebung von $A$ in $\mathrm E$.\\
Es soll also der Limes über die singuläre Homologie solcher Umgebungen betrachtet werden. Diese Homologie ist im Wesentlichen die des Paares $(\bf U$ $\cup CV,\ast)$ (reduzierte Homologie), wobei $\ast$ die Kegelspitze bezeichnet. Da der Kegel sich zu seiner Spitze zusammenziehen lässt, kann dafür auch die singuläre Homologie von $(\bf U$ $\cup CV,CV)$ betrachtet werden und wiederum wegen Homotopieäquivalenz der Räume diese durch die singuläre Homologie des Paares $(U,V)$ ersetzt werden. 
Das ist aber die Definition der \v{C}ech-Homologiegruppe des Paares $(X,A)$ mit Umgebungen.\\
Wären die speziellen Umgebungen von $X\cup CA$, wie sie oben beschrieben werden, stark kofinal in allen offenen Umgebungen (was per Definition gleichbedeutend damit ist, dass der entsprechende Inklusionsfunktor stark kofinal ist), so wäre der betrachtete Limes isomorph zu dem über alle offenen Umgebungen. Von letzterem Limes haben wir aber im vorherigen Abschnitt durch die Isomorphie zur klassischen \v{C}ech-Homologie gezeigt, dass er nur vom Raum $X\cup CA$ abhängt, also nur vom Paar $(X,A)$. Daher wäre dann auch die Paar-Definition der Umgebungs-\v{C}ech-Homologie nur vom Paar $(X,A)$ abhängig.\\
Um klar zu machen, dass die speziellen Umgebungen des Raumes $X\cup CA$ stark kofinal in allen offenen Umgebungen enthalten sind, bemerken wir zunächst, dass es auf Grund der Tatsache, dass die Menge der offenen Umgebungen gerichtet ist, ausreicht, schwache Kofinalität zu zeigen. Denn in diesem Fall folgt die starke Kofinalität aus der schwachen.
Es ist also zu zeigen, dass es für eine beliebige offene Umgebung von $X\cup CA$ in $C\mathrm E$ eine spezielle Umgebung gibt, die darin enthalten ist. Sei dafür eine beliebige, offene Umgebung $W$ von $X\cup CA$ in $C\mathrm E$ gegeben. Für alle $x\in X$ existiert eine Umgebung $U_x$ in $\mathrm E$, so dass $U_x\times [0,\epsilon_x]$ noch in $W$ enthalten ist.\\ Da $X$ kompakt ist, kann es mit endlich vielen der $U_x$ überdeckt werden. Seien endlich viele der $U_x$ ausgewählt, die $X$ überdecken, und sei $\delta$ definiert als das Minimum über die zugehörigen $\epsilon_x$, so ist \[\bigcup_{\text{\tiny ausgewählte }x} U_x \times [0,\delta)\] das gewünschte $\bf{U}$.\\
Ebenso gibt es für alle $a\in A$ eine Umgebung $U_a$, so dass $U_a\times [0,1]\subset W\subset C\mathrm E$ gilt. Dann ist $CA\subset CV\subset W$, wo $\bigcup_{a\in A} U_a=V$ gewählt ist. Insgesamt gilt für die hier gewählten $\bf{U}$ und $V$, dass $\bf{U}$$\cup CV\subset W$ ist. Die speziellen Umgebungen sind also stark kofinal in allen Umgebungen.

\bigskip

Zusätzlich gibt es den Ansatz, die relative Isomorphie direkt mit Satz \ref{1.9} zu beweisen. Dafür ist es nötig, die Definitionen aus Satz \ref{2.4} und Lemma \ref{num-funkt} zu Raumpaarversionen von stark kofinalen Funktoren zu erweitern. Obgleich dieser Ansatz hier nicht weiter verfolgt werden soll, sei angemerkt, dass auch auf dem Weg dorthin schon interessante Ergebnisse zu erzielen sind. Ein solches, die zur Identität homotopen Abbildungen zwischen Polyederpaaren betreffend, soll in diesem Abschnitt hergeleitet werden. Hierzu werden als erstes erneut einige simpliziale Grundsteine gelegt, die \cite{MR666554} entstammen. Anschließend ermöglichen dann homotopietheoretische Ergebnisse den gewünschten Beweis.

\bigskip

Ein unerlässliches Werkzeug im Umgang mit Simplizialkomlexen sind Unterteilungen solcher. Insbesondere die baryzentrische Unterteilung ist für viele Argumente hilfreich, in denen "`kleine"' Simplizes benötigt werden.

\begin{defi}[Unterteilung]
Sei $K$ ein Simplizialkomplex. Eine Unterteilung von $K$ ist ein Simplizialkomplex $K^\prime$, so dass gilt:
\begin{itemize}
	\item Die Ecken von $K^\prime$ sind Punkte von $|K|$.
	\item Ist $s^\prime$ ein Simplex von $K^\prime$, so gibt es ein Simplex $s$ von $K$, so dass $s^\prime\subset |s|$ ist, das heißt $s^\prime$ ist eine endliche nichtleere Teilmenge von $|s|$.
	\item Die lineare Abbildung $|K^\prime|\rightarrow|K|$, die jede Ecke von $K^\prime$ auf den zugehörigen Punkt in $|K|$ abbildet, ist ein Homöomorphismus.
\end{itemize}
\end{defi}

\begin{defi}[Baryzentrische Unterteilung]
Sei $K$ ein Simplizialkomplex. Definiere den Schwerpunkt (Baryzenter) $b(s)$ eines Simplexes $s=\left\{v_0, v_1,\ldots, v_q\right\}$ in $K$ als den Punkt\[
b(s)=\sum\limits_{0\leq i \leq q}{\frac{1}{q+1}v_i} 
\]
und den Simplizialkomplex $sd K$ als denjenigen Simplizialkomplex, dessen Ecken die Schwerpunkte der Simplizes von $K$ sind und dessen Simplizes endliche, nichtleere Teilmengen von Schwerpunkten von Simplizes sind, welche durch die Relation "`ist Seite von"' vollständig geordnet sind.\\
Die Simplizes von $sd K$ sind also endliche Mengen $\left\{b(s_0),\ldots, b(s_q)\right\}$, so dass $s_{i-1}$ eine Seite von $s_i$ ist für alle $i=1,\ldots, q$.\\
Der Simplizialkomplex $sd K$ heißt baryzentrische Unterteilung von $K$.
\end{defi}

Der Beweis dafür, dass die baryzentrische Unterteilung eine Unterteilung ist, kann zum Beispiel in \cite{MR666554} (Thm 3.3.9) nachgelesen werden. Hier wird er ausgespart. Jedoch beweisen wir im nächsten Satz ein nützliches Werkzeug, um zu zeigen, dass ein Simplizialkomplex eine Unterteilung eines anderen Simplizialkomplexes ist.

\begin{satz}\label{endunt}
Seien $K$ und $K^\prime$ Simplizialkomplexe, die die ersten beiden Unterteilungsbedingungen erfüllen. Dann ist $K^\prime$ eine Unterteilung von $K$ genau dann, wenn für alle Simplizes $s$ aus $K$ die Menge $\left\{<s^\prime>\mid s^\prime\in K^\prime , <s^\prime>\subset<s>\right\}$ eine endliche Partition von $<s>$ ist.
\end{satz}

\begin{bew}
\begin{itemize}
  \item []
	\item ["`$\Rightarrow$"'] Wenn $K^\prime$ eine Unterteilung von $K$ ist, dann gilt: \begin{equation}
													\left\{s^\prime\mid s^\prime \in K^\prime\right\} \text{ ist eine Unterteilung von } |K^\prime|\approx|K|.
													\label{Un}
													\end{equation}
													Sei $s\in K$ und betrachte $<s>\cap<s^\prime>$ für $s^\prime\in K^\prime$.\\ 
													Es ist entweder $<s>\cap<s^\prime>=\emptyset$ oder $<s>\cap<s^\prime>\neq\emptyset$. Im zweiten Fall ist dann aber auch                             $|\tilde{s}|\cap<s>\neq\emptyset$ für das wegen der zweiten Unterteilungseigenschaft existierende $\tilde{s}\in K$ mit                              $s^\prime\subset |\tilde{s}|$. Daraus folgt, dass $s=\tilde{s}$, da die offenen Simplizes $K$               partitionieren. Insgesamt ist dann aber $<s^\prime>\subset<s>$, was verbunden mit (\ref{Un}) ergibt, dass\[
													\left\{<s^\prime>\mid s^\prime\in K^\prime ,<s^\prime>\subset<s>\right\}
													\] eine Partition von $<s>$ ist. Aus der Kompaktheit von $|s|$ folgt, dass diese endlich ist.                                    
	\item["`$\Leftarrow$"'] Da jedes Simplex von $K$ eine endliche Anzahl von Seiten hat, ist \[
													 K^\prime(s)=\left\{s^\prime\in K^\prime\mid\exists\text{ Seite } s_1\subset s \text{ mit}<s^\prime>\subset<s_1>\right\} 
													\] ein endlicher Unterkomplex von $K^\prime$. Gemeinsam mit der Voraussetzung liefert das, dass die lineare Abbildung                               $h_s\colon|K^\prime(s)|\rightarrow |s|$, die jede Ecke von $K^\prime(s)$ auf sich selbst abbildet, ein Homöomorphismus                              ist.\\ Also gibt es eine stetige Abbildung $g\colon|K|\rightarrow|K^\prime|$, so dass $g_{|_{|s|}}=h_s^{-1}$ für $s\in K$                           ist, welche eine Inverse der Abbildung $h\colon|K^\prime|\rightarrow|K|$ aus der dritten Unterteilungseigenschaft ist.
													Diese ist somit ein Homöomorphismus, was zu zeigen war.
\end{itemize}
\end{bew}

Da uns in diesem Abschnitt vor allem Paare von Simplizialkomplexen (und Polyedern) interessieren, sollten wir wissen, wie in Bezug auf Unterteilungen mit Unterkomplexen umgegangen werden kann. Das liefert uns das nächste Lemma:

\begin{lem}
Sei $K^{\prime}$ eine Unterteilung von $K$ und sei $L$ ein Unterkomplex von $K$. Dann gibt es einen eindeutigen Unterkomplex $L^{\prime}$ von $K^{\prime}$, der eine Unterteilung von $L$ ist.\\ Diese Unterteilung $L^\prime$ von $L$ heißt die durch $K^\prime$ induzierte Unterteilung von L und sie wird mit $K^{\prime}|L$ bezeichnet.
\end{lem}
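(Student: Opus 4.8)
The plan is to construct $L'$ by hand as the set of simplices of $K'$ that fit inside $|L|$, and then to verify that it is a subdivision by invoking the characterization in Satz \ref{endunt}. Concretely, set $L' := \{s' \in K' \mid |s'| \subseteq |L|\}$. Since $|L|$ is closed in $|K|$ and every face $t'$ of a simplex $s' \in L'$ satisfies $|t'| \subseteq |s'| \subseteq |L|$, the family $L'$ is closed under passing to faces and is therefore a subcomplex of $K'$. Because $|L|$ is closed, the defining condition $|s'| \subseteq |L|$ is equivalent to $<s'> \subseteq |L|$, which is the form I shall use below.

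The fact that drives everything is that each open simplex $<s'>$ of $K'$ is contained in exactly one open simplex $<s>$ of $K$. This follows from Satz \ref{endunt} applied to the given subdivision $K'$ of $K$: for every $s \in K$ the open simplices $<s'>$ with $<s'> \subseteq <s>$ partition $<s>$, and since the $<s>$ in turn partition $|K| = |K'|$ while the $<s'>$ already partition $|K'|$ by themselves, every $<s'>$ must occur in exactly one of these families. With this in hand the first two subdivision conditions for $L'$ over $L$ are immediate: the vertices of $L'$ are points of $|L| \subseteq |K|$, and for $s' \in L'$ the unique $s_0 \in K$ with $<s'> \subseteq <s_0>$ must lie in $L$, since $<s'> \subseteq |L| = \bigcup_{s \in L} <s>$ meets only the open simplices of $L$; hence $|s'| \subseteq |s_0|$ with $s_0 \in L$.

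For the decisive partition condition of Satz \ref{endunt}, fix $s \in L$ and note the set equality
\[
\{<s'> \mid s' \in L', <s'> \subseteq <s>\} = \{<s'> \mid s' \in K', <s'> \subseteq <s>\},
\]
valid because any $s' \in K'$ with $<s'> \subseteq <s> \subseteq |L|$ automatically lies in $L'$. By Satz \ref{endunt}, since $K'$ is a subdivision of $K$, the right-hand family is a finite partition of $<s>$; hence so is the left-hand one, and the converse direction of Satz \ref{endunt} shows that $L'$ is a subdivision of $L$.

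It remains to prove uniqueness. First, $|L'| = |L|$: the inclusion $|L'| \subseteq |L|$ is built into the definition, and conversely every point of $|L|$ lies in some open simplex $<s'>$ of $K'$ with $<s'> \subseteq <s> \subseteq |L|$ for the open simplex $<s>$ of $L$ containing it, so $<s'> \subseteq |L'|$. Now let $L''$ be any subcomplex of $K'$ that subdivides $L$; then the third subdivision axiom forces $|L''| = |L| = |L'|$ as subsets of $|K|$. A subcomplex $M$ of $K'$ is recovered from its realization via $M = \{s' \in K' \mid <s'> \subseteq |M|\}$, since the open simplices of $K'$ partition $|K'|$ and so $<s'> \subseteq |M|$ forces $<s'> = <t'>$ for some $t' \in M$. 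Applying this to $L'$ and $L''$ yields $L' = L''$. The only delicate point in the whole argument is the unique-containment fact of the second paragraph; once it is secured, the remainder is careful bookkeeping with the partition of $|K|$ into open simplices.
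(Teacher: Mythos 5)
Your proposal is correct and follows essentially the same route as the paper: you define $L^{\prime}$ as the simplices of $K^{\prime}$ whose (open) realization lies in $|L|$, check the first two subdivision axioms directly, obtain the third from Satz \ref{endunt} via the same set equality of partitions over each $<s>$ with $s\in L$, and prove uniqueness by observing that any such subcomplex is recovered from its realization $|L|$. The only difference is that you spell out some steps the paper leaves implicit (the unique open simplex of $K$ containing a given open simplex of $K^{\prime}$, and the recovery of a subcomplex from its realization), which is harmless added detail rather than a different argument.
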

\begin{bew}
Ist $L^\prime$ ein Unterkomplex von $K^\prime$, der eine Unterteilung von $L$ ist, so ist \linebreak[4] $ L^\prime=\left\{s^\prime\in K^\prime \mid <s^\prime> \subset |L| \right\}$, was die Eindeutigkeit von $L^\prime$ zeigt.\\
Um die Existenz von $L^\prime$ zu zeigen, zeigt man, dass $\left\{s^\prime\in K^\prime \mid <s^\prime> \subset |L|\right\}$ die Eigenschaften einer Unterteilung von $L$ besitzt. (Diese Menge ist ein Unterkomplex $L^\prime$ in $K^\prime$, da falls $<s^\prime>\subset |L|$ für ein Simplex $s^\prime$ in $K^\prime$ ist, dann gilt dies auch für alle seine Seiten.)\\
Es ist leicht zu sehen, dass $L^\prime$ die erste Unterteilungseigenschaft erfüllt. Die zweite Unterteilungseigenschaft folgt, da falls $<s^\prime>\subset|L|$ ist, es auch ein Simplex in $L$ gibt, so das $s^\prime$ darin liegt. \\
Um die dritte Unterteilungseigenschaft zu zeigen, reicht es wegen Satz \ref{endunt} zu zeigen, dass für alle $s$ aus $L$ die Menge $\left\{<s^\prime>\mid s^\prime\in L^\prime , <s^\prime>\subset<s>\right\}$ eine endliche Partition von $<s>$ ist.\\ Da wir wissen, dass $K^\prime$ eine Unterteilung von $K$ ist, ist wiederum wegen Satz \ref{endunt} für alle $s$ aus $L$ die Menge $\left\{<s^\prime>\mid s^\prime\in K^\prime , <s^\prime>\subset<s>\right\}$ eine endliche Partition von $<s>$. Nach der Definition von $L^\prime$ ist aber \[
\left\{<s^\prime>\mid s^\prime\in K^\prime , <s^\prime>\subset<s>\right\}=\left\{<s^\prime>\mid s^\prime\in L^\prime , <s^\prime>\subset<s>\right\},
\] woraus die Behauptung folgt.
\end{bew}

Eine nützliche Aussage für Paare von topologischen Räumen $(X,A)$ und deren Triangulierungen $((K,L),f)$ (das heißt $f\colon(X,A)\stackrel{\approx}{\rightarrow}(|K|,|L|)$) folgt direkt aus den Definitionen und dem vorherigen Lemma:

\begin{satz}\label{sub}
Ist $((K,L),f)$ eine Triangulierung von $(X,A)$ und $K^{\prime}$ eine Unterteilung von $K$, dann ist $((K^{\prime},K^{\prime}|L),f)$ auch eine Triangulierung von $(X,A)$.
\end{satz}

Später wird es von Bedeutung sein, einen Simplizialkomplex in Bereiche aufzuteilen. Um dann eine vernünftige Darstellung der Simplizes in Bezug auf diese Aufteilung zu erhalten, ist folgende Begriffsbildung entscheidend. Warum das so ist, lässt sich im anschließenden Lemma erkennen.

\begin{defi}
Ein Unterkomplex $L\subset K$ eines simplizialen Komplexes $K$ heißt voll, falls jedes Simplex von $K$, dessen Ecken alle in $L$ sind, selbst auch zu $L$ gehört.
\end{defi}

\begin{lem}\label{auft}
Sei $L$ ein voller Unterkomplex eines Simplizialkomplexes $K$ und sei $N$ der größte Unterkomplex von $K$, der disjunkt zu $L$ ist. Dann ist jedes Simplex von $K$ entweder in $N$ oder in $L$ oder von der Form $s ^{\prime}\cup s^{\prime\prime}$ für ein $s^{\prime}\in L$ und ein $s^{\prime\prime}\in N$. 
\end{lem}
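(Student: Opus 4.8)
The plan is to make the subcomplex $N$ completely explicit and then to split each simplex of $K$ along its vertices according to whether or not they lie in $L$. First I would observe that a subcomplex of $K$ is disjoint from $L$ precisely when it shares no vertex with $L$: if such a subcomplex contained a vertex $v$ of $L$, then the $0$-simplex $\{v\}$ would be a common simplex. Hence the largest subcomplex disjoint from $L$ must be
\[
N=\{\sigma\in K \mid \sigma \text{ has no vertex in } L\}.
\]
This is indeed a subcomplex, since the property ``has no vertex in $L$'' is inherited by every face, and by construction it is the largest one disjoint from $L$.

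Next, given an arbitrary simplex $s$ of $K$, I would partition its vertex set into those vertices lying in $L$ and those not lying in $L$. Writing $s'$ for the (possibly empty) face of $s$ consisting of the vertices in $L$ and $s''$ for the face consisting of the remaining vertices, one has $s=s'\cup s''$ as a disjoint union of vertex sets. The structural fact I rely on here is that every nonempty subset of a simplex of $K$ is again a simplex of $K$; consequently, whenever $s'$ or $s''$ is nonempty it is itself a simplex of $K$.

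It then remains to place $s'$ and $s''$ in $L$ and $N$ respectively. If $s'$ is nonempty, all of its vertices lie in $L$, so by the very definition of a \emph{full} subcomplex $s'$ itself belongs to $L$. If $s''$ is nonempty, none of its vertices lie in $L$, so $s''\in N$ by the explicit description above. Distinguishing the three cases $s''=\emptyset$ (then $s=s'\in L$), $s'=\emptyset$ (then $s=s''\in N$), and both nonempty (then $s=s'\cup s''$ with $s'\in L$ and $s''\in N$) yields exactly the claimed trichotomy.

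I do not expect a genuine obstacle, as the argument is purely combinatorial; the one point that must be handled with care is the role of the fullness hypothesis, which is precisely what upgrades the statement ``all vertices of $s'$ lie in $L$'' to ``$s'$ is a simplex of $L$''. Without fullness this step, and hence the lemma, would fail, so I would take care to flag that this hypothesis is used in an essential, non-redundant way.
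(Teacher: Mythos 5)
Your proof is correct and follows essentially the same route as the paper: split the vertex set of a simplex according to membership in $L$ and use fullness to place the $L$-part in $L$. The only difference is that you make the characterization $N=\{\sigma\in K\mid \sigma \text{ hat keine Ecke in } L\}$ explicit, which the paper's proof uses implicitly; this is a welcome clarification but not a different argument.
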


\begin{bew}
Sei $s=\left\{v_0,v_1,\ldots,v_q\right\}$ ein Simplex von $K$. Dann ist entweder keine Ecke von $s$ in $L$ (also $s\in N$) oder alle Ecken von $s$ sind in $L$, was bedeutet, dass $s\in L$ ist, da $L$ voll ist, oder es ist möglich, die Ecken so zu nummerieren, dass $v_i\in L$ für $i\leq p$ und $v_i\notin L$ für $i>p$, wobei $0\leq p<q$. Im letzten Fall ist $s=s ^{\prime}\cup s^{\prime\prime}$, wobei $s^{\prime}=\left\{v_0,v_1,\ldots,v_p\right\}$ in $L$ ist, da $L$ voll ist und $s^{\prime\prime}=\left\{v_{p+1},\ldots,v_q\right\}$ in $N$ ist.
\end{bew}

Der Begriff eines vollen Unterkomplexes macht nicht zuletzt auch deswegen Sinn, da die baryzentrische Unterteilung auf natürliche Weise einen solchen liefert:

\begin{lem}\label{voll}
Es bezeichne $sd K$ die baryzentrische Unterteilung eines simplizialen Komplexes $K$. Dann gilt: Ist $L$ ein Unterkomplex von $K$, so ist $sd L$ ein voller Unterkomplex von $sd K$.
\end{lem}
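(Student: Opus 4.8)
The plan is to reduce the statement to the defining description of $sd K$ together with the bijection between the vertices of $sd K$ and the simplices of $K$. First I would record that $sd L$ really is a subcomplex of $sd K$: its vertices are the barycenters $b(s)$ with $s \in L \subset K$, hence vertices of $sd K$, and its simplices are the chains $\{b(s_0), \ldots, b(s_q)\}$ with $s_0 \subsetneq \cdots \subsetneq s_q$ and all $s_i \in L$, which are in particular such chains in $K$ and therefore simplices of $sd K$.

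The key point is the correspondence between a vertex of $sd K$ and the simplex of $K$ whose barycenter it is. Since $b(s)$ lies in the open simplex $\langle s\rangle$ and the open simplices partition $|K|$, the assignment $s \mapsto b(s)$ is injective; consequently a vertex $b(s)$ of $sd K$ belongs to $sd L$ if and only if $s \in L$.

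To verify fullness I would take an arbitrary simplex $\sigma = \{b(s_0), \ldots, b(s_q)\}$ of $sd K$, so that $s_0 \subsetneq \cdots \subsetneq s_q$ is a chain in $K$ under the face relation, and assume that every vertex of $\sigma$ lies in $sd L$. By the correspondence just noted this forces $s_i \in L$ for each $i$, so that $s_0 \subsetneq \cdots \subsetneq s_q$ is now a chain of simplices of $L$ ordered by the face relation. This is exactly the condition for $\{b(s_0), \ldots, b(s_q)\}$ to be a simplex of $sd L$, whence $\sigma \in sd L$ and $sd L$ is full in $sd K$.

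There is no serious obstacle here; the argument is essentially a translation between \emph{vertex of $sd K$} and \emph{simplex of $K$}. The only things to keep straight are the injectivity of $s \mapsto b(s)$ (which is what lets one read off membership in $L$ directly from the vertices) and the observation that the face-chain condition defining the simplices of $sd K$ is automatically inherited in $L$ once all of $s_0, \ldots, s_q$ are known to lie in $L$. Fullness is therefore an immediate consequence of the vertexwise nature of the simplicial structure of a barycentric subdivision.
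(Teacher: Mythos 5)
Your argument is correct and follows essentially the same route as the paper: take a simplex $\{b(s_0),\ldots,b(s_q)\}$ of $sd K$ with all vertices in $sd L$, conclude that each $s_i$ lies in $L$ and that the face-chain condition is inherited, hence the simplex lies in $sd L$. The extra remarks on the injectivity of $s\mapsto b(s)$ and on $sd L$ being a subcomplex are correct refinements of details the paper leaves implicit.
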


\begin{bew}
Sei $\left\{b(s_0), b(s_1),\ldots, b(s_q)\right\}$ ein Simplex von  $sd K$, dessen Ecken alle zu $sd L$ gehören. Das heißt, dass $s_{i-1}$ eine Seite von $s_i$ ist für alle $i=1,2,\ldots,q$ und dass jedes $s_i$ in $L$ liegt. Also liegt auch $\left\{b(s_0), b(s_1),\ldots, b(s_q)\right\}$ in $sd L$.
\end{bew}

Homotopien erweitern zu können, liefert den Schlüssel für viele Beweise. Die Definiton der Homotopieerweiterungseigenschaft ist in diesem Zusammenhang fundamental.

\begin{defi}[Homotopieerweiterungseigenschaft]
Man sagt, ein topologisches Raumpaar $(X,A)$ habe die Homotopieerweiterungseigenschaft bezüglich eines topologischen Raumes $Y$, falls für eine gegebene Abbildung $g\colon X \rightarrow Y$ und eine gegebene Homotopie $G\colon A\times I\rightarrow Y$, so dass $G(x,0)=g(x)$ für $x\in A$ gilt, eine Abbildung $F\colon X\times I\rightarrow Y$ existiert, für die gilt: \[
F(x,0)=g(x) \text{ für } x\in X \text{ und } F|_{{ A\times I}} = G.
\]
\end{defi}

Anders ausgedrückt hat ein Raumpaar $(X,A)$ die Homotopieerweiterungseigenschaft bezüglich $Y$, wenn die partielle Homotopie $G\colon X\times \left\{0\right\}\cup A\times I\rightarrow Y$ zu einer Homotopie $F\colon X\times I\rightarrow Y $  erweitert werden kann.\\
Da wir als nächstes sehen wollen, dass es tatsächlich Räume gibt, die die Homotopieerweiterungseigenschaft besitzen, betrachten wir folgendes Lemma, in dem für einige Fälle die Gültigkeit der Homotopieerweiterungseigenschaft gesichert wird. Es ist \cite{MR0181977} entnommen und wird dort \textit{"`Dowker's Lemma"'} genannt.\\
Es enthält die Begriffsbildung eines abzählbar parakompakten Raumes. Diesen charakterisiert die Eigenschaft, dass jede abzählbare offene Überdeckung des Raumes eine lokal endliche Verfeinerung hat. Wir interessieren uns in diesem Zusammenhang nur für Polyeder, von denen bekannt ist, dass sie sogar parakompakt sind\footnote{Siehe zum Beispiel \cite{cw-top}(Thm 4.2).}, es also für jede offene Überdeckung eine lokal endliche Verfeinerung gibt.

\begin{lem}\label{dowker}
Sei $A$ eine abgeschlossene Teilmenge eines normalen, abzählbar parakompakten Raumes $X$ und $Y$ ein beliebiger topologischer Raum, und bezeichne mit $T$ die abgeschlossene Teilmenge $T:=X\times\left\{0\right\}\cup A\times I$  des Raumes $X\times I$. Hat eine Abbildung $f\colon T\rightarrow Y$ eine Erweiterung über $(X\times\left\{0\right\})\cup U$, wobei $U$ eine offene Menge in $X\times I$ ist, die $A\times I$ enthält, so hat $f$ auch eine Erweiterung über $X\times I$.
\end{lem}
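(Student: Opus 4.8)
The plan is to reduce the extension problem to the construction of a single continuous ``shrinking function'' and then to collapse the full cylinder into the region where $f$ is already defined. Write $\tilde{f}\colon (X\times\{0\})\cup U\to Y$ for the given extension, and note $T\subset (X\times\{0\})\cup U$ because $A\times I\subseteq U$. Suppose I can produce a continuous $\phi\colon X\to I$ with $\phi|_A\equiv 1$ such that $(x,t)\in U$ for all $x\in X$ and all $t$ with $0<t\le\phi(x)$. Then $\rho\colon X\times I\to X\times I$, $\rho(x,t)=(x,\min(t,\phi(x)))$, is continuous and lands in $(X\times\{0\})\cup U$: if $\min(t,\phi(x))=0$ the image lies in $X\times\{0\}$, and otherwise $0<\min(t,\phi(x))\le\phi(x)$ forces it into $U$. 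Hence $F:=\tilde{f}\circ\rho\colon X\times I\to Y$ is a well-defined continuous map, and it is the desired extension: on $X\times\{0\}$ one has $F(x,0)=\tilde{f}(x,0)=f(x,0)$, while on $A\times I$ the equality $\phi\equiv 1$ gives $\min(t,1)=t$, so $F(a,t)=\tilde{f}(a,t)=f(a,t)$.

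It remains to build $\phi$, and this is where the hypotheses enter. First I would exploit the compactness of $I$: for each $a\in A$ the fibre $\{a\}\times I$ lies in the open set $U$, so the tube lemma yields an open neighbourhood $W_a\ni a$ in $X$ with $W_a\times I\subseteq U$. Setting $O:=\bigcup_{a\in A}W_a$ gives an open set with $A\subseteq O$ and $O\times I\subseteq U$. Now $A$ and $X\setminus O$ are disjoint closed subsets of the normal space $X$, so Urysohn's lemma supplies a continuous $\phi\colon X\to I$ with $\phi|_A\equiv 1$ and $\phi|_{X\setminus O}\equiv 0$. This $\phi$ has the required property: if $\phi(x)>0$ then $x\in O$, whence the whole fibre $\{x\}\times I$, and in particular $\{x\}\times(0,\phi(x)]$, lies in $U$; and if $\phi(x)=0$ the condition is vacuous.

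I expect the genuine content to be the passage from ``$U$ contains the tube $A\times I$'' to ``$U$ contains a full open tube $O\times I$ over a neighbourhood of $A$'', together with the observation that a shrinking function equal to $1$ on $A$ can still keep the collapsed cylinder inside the domain. The collapsing map $\rho$ and the continuity checks are routine once $\phi$ is in hand. The main subtlety to watch is that $\phi\equiv 1$ on $A$ be compatible with staying inside $U$; this is automatic here, since on $O$ the entire fibre lies in $U$, so no strict inequality against a ``barrier height'' is needed.

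Finally, a word on the hypothesis of countable paracompactness. In the argument above only normality of $X$ (for Urysohn's lemma) and compactness of $I$ (for the tube lemma) are used, so I would first verify whether countable paracompactness is genuinely required for this formulation. It becomes indispensable in the more general situation where the compact factor $I$ is replaced by a non-compact parameter: there the tube lemma fails, the admissible height $\phi(x)$ must be assembled from a countable family of local estimates, and one invokes the Dowker characterisation (normal plus countably paracompact is equivalent to normality of the product with $I$, equivalently to a strict insertion theorem between semicontinuous functions) to obtain a continuous $\phi$ dominated by the lower-semicontinuous barrier $h(x)=\inf\{t:(x,t)\notin U\}$ while still forcing $\phi|_A\equiv 1$. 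If the intended proof follows the cited source in that generality, constructing $\phi$ via this insertion theorem, rather than via the tube lemma, would be the main obstacle.
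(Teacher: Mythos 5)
Your proof is correct and follows the same architecture as the paper's: produce an open $V\supseteq A$ with $V\times I\subseteq U$, take a Urysohn function equal to $1$ on $A$ and $0$ off $V$, and compose the given extension with a fibrewise collapse of the cylinder into $(X\times\{0\})\cup U$. You differ in two places, both soundly. For the tube, the paper first invokes the theorem that normality plus countable paracompactness of $X$ makes $X\times I$ normal and derives the neighbourhood $V$ from that; you instead apply the tube lemma at each $a\in A$ and take the union of the resulting $W_a$, which uses only compactness of $I$. Your closing observation is therefore accurate: on this route the countable paracompactness hypothesis is never used, whereas the paper consumes it precisely (and only) to obtain normality of $X\times I$ --- a step your tube-lemma argument renders superfluous for this formulation with compact second factor. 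For the collapse, your truncation $\rho(x,t)=(x,\min(t,\phi(x)))$ replaces the paper's rescaling $(x,t)\mapsto(x,e(x)t)$; both are continuous, land in the domain of the given extension, and restrict to the identity on $T$, so the resulting $F$ extends $f$ either way.
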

\begin{bew}
Sei $g\colon(X\times\left\{0\right\})\cup U\rightarrow Y$ eine Erweiterung von $f$, wobei $U$ eine offene Menge in $X\times I$ ist, die $A\times I$ enthält.
Da $X$ normal und abzählbar parakompakt ist, ist $X\times I$ normal\footnote{Siehe \cite{MR0043446}(Thm. 4).}. Daher ist es möglich, eine offene Umgebung $V$ von $A$ in $X$ zu finden, so dass $V\times I\subset U$ gilt. $A$ und $X\setminus V$ sind disjunkte, abgeschlossene Teilmengen eines normalen Raumes $X$, daher existiert nach Urysohn's Lemma \footnote{Siehe beispielsweise \cite{MR1224675}(Lemma 10.2).} eine Abbildung $e\colon X\rightarrow I$, so dass\[
e(x)=\left\{\begin{array}{ll} 1 &\text{ , falls } x\in A\cr
									 0 &\text{ , falls } x\in X\setminus V.
\end{array}\right.
\]
Definiere eine Abbildung $h\colon X\times I\rightarrow$ durch \[
h(x,t)=g\left(x,e\left(x\right)t\right)\]
für alle $x\in X$ und alle $t\in I$. Diese ist eine Erweiterung von $f$ über $X\times I$.
\end{bew}

Das finale Resultat ist nun mit Ideen aus ähnlichen Beweisen in \cite{MR666554} und \cite{MR0050886} erreichbar:

\begin{satz}\label{retr}
Sei $(R,Q)$ ein polyedrisches Paar. Dann existert eine offene Menge $B$, wobei $Q\subset B\subset R$ und eine Abbildung $\omega\colon(R,Q)\rightarrow(R,Q)$, so dass $\omega(B)\subset Q$ und $\omega\simeq id_R$ gilt.
\end{satz}

\begin{bew}
Wegen der Sätze \ref{sub} und \ref{voll} kann man ohne Beschränkung der Allgemeinheit davon ausgehen, dass $(R,Q)\approx(|K|,|L|)$ ist, wobei $L$ ein voller Unterkomplex des simplizialen Komplexes $K$ ist.\\
Der folgende Beweis soll so funktionieren, dass wir zeigen, dass es eine abgeschlossene Teilmenge von $|K|$ gibt, von welcher $|L|$ ein starkes Deformationsretrakt ist. $|K|$ zusammen mit dieser Teilmenge erfüllt dann die Homotopieerweiterungseigenschaft wegen Lemma \ref{dowker}. Schließlich wird $B$ als die gefundene Menge ohne ihren Rand definiert.\\
Es sei $N$ der größte Unterkomplex von $K$ disjunkt zu $L$. Wir können nun zunächst zeigen, dass $|L|$ ein starkes Deformationsretrakt von $|K|\setminus|N|$ ist:\\
Sei dafür $\alpha\in|K|\setminus|N|$. Dann ist wegen Satz \ref{auft} $\alpha\in |L|$ oder es existieren Ecken $v_0, v_1,\ldots, v_p\in L$ und Ecken $v_{p+1}, \ldots, v_q\in N$ mit $0\leq p$ und $p+1\leq q$, so dass\linebreak[4] $\alpha\in<v_0,\ldots,v_q>$. Im zweiten Fall gibt es eine Darstellung von $\alpha$ in baryzentrischen Koordinaten: $\alpha=\sum\limits_{0\leq i\leq q}{\alpha_i v_i}$, wobei $\alpha_i>0$ ist.\\
Definiere nun $a:=\sum\limits_{i=0}^{p}{\alpha_i}$, für welches dann $0<a<1$ gilt. Mit \[\alpha_i^{\prime}:=\frac{\alpha_i}{a}\text{ für } 0\leq i\leq p \text{ und } \alpha_i^{{\prime}{\prime}}:=\frac{\alpha_i}{(1-a)} \text{ für } p+1\leq i \leq q \text{ , sowie }\] \[\alpha^{\prime}:=\sum\limits_{i=0}^{p}{\alpha_i^{\prime}v_i}\in|L| \text{ und }      \alpha^{{\prime}{\prime}}:=\sum\limits_{i=p+1}^{q}\alpha_i^{{\prime}{\prime}}v_i\in|N| \text{ , gilt dann }\] $\alpha=a\alpha^{\prime}+(1-a)\alpha^{{\prime}{\prime}}.$\\
Definiere nun $r\colon |K|\setminus |N|\rightarrow|L|$ mit dieser Notation durch \[
\alpha\mapsto \left\{\begin{array}{ll} \alpha & \text{ , falls } \alpha \in |L|\cr
														\alpha^{\prime} & \text{ , sonst }.
							\end{array}\right.
\] 
Bezeichnet man mit $i\colon|L|\hookrightarrow|K|\setminus|N|$ die Inklusion, dann ist die folgende Homotopie $H\colon |K|\setminus|N|\times I\rightarrow |K|\setminus|N|$ eine starke Deformationsretraktion von $|K|\setminus|N|$  nach $|L|$ , also eine Homotopie $id_{|K|\setminus|N|}\simeq ir$ relativ zu |L|:\[
H(\alpha,t)=\left\{\begin{array}{ll}
												\alpha & \text{ für } \alpha \in |L|, t\in I\cr
												t\alpha^{\prime}+(1-t)\alpha & \text{ für } \alpha\in |K|\setminus(|N|\cup|L|), t\in I.
												\end{array}\right.
\]
Die Stetigkeit von $H$ folgt daraus, dass $H|_{|L|\times I}$ stetig ist und für alle Simplizes der Form $s^{\prime}\cup s^{{\prime}{\prime}}$, wobei $s^\prime\in L$ und $s^{{\prime}{\prime}}\in N$, $H|_{[|s^{\prime}\cup s^{{\prime}{\prime}}|\cap (|K|\setminus|N|)]\times I}$ stetig ist.\\
Betrachtet man nun die baryzentrische Unterteilung $sdK$ und hierin den größten Unterkomplex $N_1$, der disjunkt zu $L$ ist, so gilt $\overline{|K|\setminus|N_1|}\subset |K|\setminus|N|$, da $|N|$ echt in $|N_1|$ liegt. Weiter gilt für $\alpha\in \overline{|K|\setminus|N_1|}$, dass $H(\alpha,t)\in\overline{|K|\setminus|N_1|}$ ist für alle $t\in I$, denn die Homotopie $H$ schickt Punkte entweder auf sich selbst oder auf Punkte, die näher an $|L|$ liegen als sie selbst und daher erst recht nicht zu $|N_1|$ gehören. \\
Also definiert die Einschränkung von $H$ auch eine starke Deformationsretraktion von $\overline{|K|\setminus|N_1|}$ nach $|L|$. Für das Paar $(|K|,\overline{|K|\setminus|N_1|})$ gibt es also per Konstruktion eine offene Umgebung von $\overline{|K|\setminus|N_1|}\times I$, welche hier $|K|\setminus|N|\times I$ ist, so dass sich die partielle Homotopie auf $|K|\times \left\{0\right\}\cup |K|\setminus|N|\times I$ erweitern lässt. Mit Lemma \ref{dowker} hat $(|K|,\overline{|K|\setminus|N_1|})$ also die Homotopieerweiterungseigenschaft.\\ Definiere nun $B=|K|\setminus|N_1|$ und $\omega$ als die zur Identität homotope Abbildung, die man aus der Homotopieerweiterungseigenschaft bekommt. Damit folgt das Gewünschte.
\end{bew}
\chapter{Die Dachabbildung}
Die Dachabbildung auf Kettenebene mit Koeffizienten in $\mathbb Z$ sinnvoll zu definieren, ist das erste Ziel dieses Kapitels. Als Vorbereitung hierauf beschäftigt sich der erste Abschnitt mit dem simplizialen Kreuzprodukt und auch insbesondere mit dessen Beziehung zur koordinatenvertauschenden Involution.\\
Resultierend ergibt sich im zweiten Abschnitt die Definition der Dachabbildung auf Kettenebene in geraden Dimensionen, deren Existenz in \v{C}ech-Homologie (in geraden Dimensionen) im Anschluss daran bewiesen wird.\\
Schließlich soll im letzten Abschnitt beleuchtet werden, welche Eigenschaften die Dachabbildung interessant machen.

Sei von nun an $X$ stets eine kompakte Teilmenge einer glatten, kompakten Mannigfaltigkeit mit Rand  und $(X,A)$ ein kompaktes Paar. 
\[
\tau:X\times X\rightarrow X\times X
\]
\[
(x,y)\mapsto(y,x)
\]
sei die koordinatenvertauschende Involution und $\widetriangle{X}$ sei definiert als \[ \widetriangle{X}:=X\times X /\tau=p_\tau^X(X\times X).\] Für das kompakte Paar $(X,A)$ ist dementsprechend  $\widetriangle{(X,A)}:=(\widetriangle{X},\widetriangle{A})$, wobei \[\widetriangle{A}:=p_\tau^X(X\times A)\cup p_\tau^X(A\times X)\cup p_\tau^X(\bigtriangleup_X)\] ist und $\bigtriangleup_X$ die Diagonale $\left\{(x,x) \mid x\in X\right\}\subset X \times X$ bezeichnet.

\section{Das simpliziale Kreuzprodukt}

Im Folgenden werden wir das simpliziale Kreuzprodukt benötigen, welches wir uns deshalb nun genauer ansehen wollen. Die hier benutzte Definition ist \cite{MR1867354} entnommen.\\
Seien also $Y$ und $Z$ topologische Räume und $\sigma\colon\Delta_m\rightarrow Y$ und $\mu\colon \Delta_n\rightarrow Z$ singuläre Simplizes. Bezeichne die Ecken von $\Delta_m$ mit $ v_0,v_1,\ldots, v_m$ und die von $\Delta_n$ mit $ w_0,w_1,\ldots,w_n$. Weiterhin sei $(\sigma,\mu)\colon \Delta_m\times\Delta_n\rightarrow X\times Y$ die Produktabbildung $(v,w)\mapsto(\sigma(v),\mu(w))$.\\
Die Idee des Kreuzproduktes ist es, $\Delta_m\times\Delta_n$ in $(m+n)$-dimensionale Simplizes zu unterteilen, um dann die Summe über die Einschränkungen der Abbildung $(\sigma,\mu)$ auf diese Simplizes zu bilden, und zwar mit geeigneten Vorzeichen. Hierfür betrachte man $(i,j)$ mit $0\leq i\leq m$, $0\leq j \leq n$ als Ecken eines $(m\times n)-$Gitters im $\mathbb R^2$.\\
Sei $f$ ein Weg vom Punkt $(0,0)$ zum Punkt $(m,n)$ des Gitters, bestehend aus einer Folge von $m+n$ waagerechten und senkrechten Kanten, die immer nur entweder nach oben oder nach rechts führen. Definiere $|f|$ als die Anzahl aller Gitterkästchen unterhalb des Weges $f$. Assoziiere zu $f$ außerdem die lineare Abbildung $l_f\colon\Delta_{m+n}\rightarrow\Delta_m\times\Delta_n$, die der $p$-ten Ecke von $\Delta_{m+n}$ das Eckenpaar $(v_{i_p},w_{j_p})\in\Delta_m\times\Delta_n$ zuordnet, wobei $(i_p,j_p)$ die $p$-te Ecke des Weges $f$ ist. Diese Vorbereitungen münden in der 

\begin{defi}[Kreuzprodukt]
Definiere das simpliziale Kreuzprodukt\[
\times\colon C_m(Y,\mathbb Z)\otimes C_n(Z,\mathbb Z)\rightarrow C_{m+n}(Y\times Z,\mathbb Z)
\] durch die Formel\[ 
\sigma\times\mu:=\sum_{f}{(-1)^{|f|}(\sigma,\mu) \circ l_f}.
\]
\end{defi} 

Um besser zu verstehen, wie die Unterteilung von $\Delta_m\times\Delta_n$ in $(m+n)$-dimensionale Simplizes funktioniert, betrachte man $\Delta_m$ als Teilmenge von $\mathbb R^m$ definiert durch die Koordinaten \begin{equation}
0 \leq x_1 \leq\ldots\leq x_m\leq 1 \text{  mit Ecken } v_i=(\underbrace{0,\ldots,0}_{m-i \text{ Stück}},\underbrace{1,\ldots,1}_{i \text{ Stück}}) \label{delta_m} .\end{equation} Ebenso fasse man $\Delta_n$ als Teilmenge von $\mathbb R^n$ gegeben durch \begin{equation}
0 \leq y_1 \leq\ldots\leq y_n\leq 1 \text{  mit Ecken } w_j=(\underbrace{0,\ldots,0}_{n-j \text{ Stück}},\underbrace{1,\ldots,1}_{j \text{ Stück}}) \label{delta_n} \end{equation} auf.\\
$\Delta_m\times\Delta_n$ sind dann die Tupel $(x_1,x_2,\ldots, x_m,y_1,y_2,\ldots,y_n)$, für die sowohl (\ref{delta_m}) als auch (\ref{delta_n}) gilt. Hierin definiert $0\leq x_1\leq\ldots \leq x_m\leq y_1\leq \ldots\leq y_n\leq 1$ einen $(m+n)$-dimensionalen Simplex.\\ 
Für jeden anderen Punkt $p$ in $\Delta_m\times\Delta_n$ gibt es $0\leq x_1\leq\ldots \leq x_m\leq y_1\leq \ldots\leq y_n\leq 1,$ die durch eine Permutation der Koordinaten in $p$ überführt werden können. So eine Permutation entspricht gerade einem Weg $f$ im $m\times n-$Gitter von oben mit je einer nach rechts führenden Kante für jedes $x_i$ und einer nach oben führenden Kante für jedes $y_i$ in der permutierten Darstellung.\\
So kann $\Delta_m\times\Delta_n$ als Vereinigung der Simplizes $l_f(\Delta_{m+n})$ indiziert durch die Wege $f$ dargestellt werden.

\begin{bem}
Durch Nachrechnen folgt : Für $\sigma\colon\Delta_m\rightarrow Y$ und $\mu\colon\Delta_n\rightarrow Z$ gilt die übliche Randformel\[\partial(\sigma\times\mu)=\partial\sigma\times\mu + (-1)^m\sigma\times\partial\mu.\]
\end{bem}

Folgende Eigenschaft des simplizialen Kreuzprodukts ist von Bedeutung für das weitere Vorgehen:

\begin{lem} \label{eigkreuz}

 Bezeichne $p_{\tau}^X\colon X\times X\rightarrow \widetriangle{X}$ die Projektion und $(p_\tau^X)_\sharp$ die davon induzierte Kettenabbildung, so 				gilt für singuläre Simplizes $\sigma_1,\sigma_2\colon\Delta_k\rightarrow X$: 
							\begin{enumerate}
									\item $(p_\tau^X)\sharp(\sigma_1\times\sigma_2)=(p_\tau^X)\sharp(\sigma_2\times\sigma_1)$, falls k gerade.
									\item $(p_\tau^X)\sharp(\sigma_1\times\sigma_2)= -(p_\tau^X)\sharp(\sigma_2\times\sigma_1)$ , falls k ungerade.
									\end{enumerate}

\end{lem}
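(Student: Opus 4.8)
The plan is to reduce the whole statement to a single chain-level identity for the swap involution and then do careful sign bookkeeping inside the path formula. The crucial starting observation is that $p_\tau^X$ is the quotient map identifying $(x,y)$ with $(y,x)$, so that $p_\tau^X\circ\tau = p_\tau^X$ and, by functoriality of the induced chain maps, $(p_\tau^X)_\sharp = (p_\tau^X)_\sharp\circ\tau_\sharp$. It therefore suffices to prove the single identity $\tau_\sharp(\sigma_1\times\sigma_2) = (-1)^{k}\,(\sigma_2\times\sigma_1)$ in $C_{2k}(X\times X,\mathbb Z)$; applying $(p_\tau^X)_\sharp$ to both sides then yields both asserted cases simultaneously, since the factor $(-1)^k$ is $+1$ for even $k$ and $-1$ for odd $k$.

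To establish that identity I would expand $\sigma_1\times\sigma_2 = \sum_f (-1)^{|f|}(\sigma_1,\sigma_2)\circ l_f$ over all monotone lattice paths $f$ from $(0,0)$ to $(k,k)$. Since $\tau_\sharp$ is post-composition by $\tau$ and $\tau\circ(\sigma_1,\sigma_2) = (\sigma_2,\sigma_1)\circ T$, where $T\colon\Delta_k\times\Delta_k\to\Delta_k\times\Delta_k$ is the coordinate swap $(v,w)\mapsto(w,v)$, one obtains $\tau_\sharp(\sigma_1\times\sigma_2) = \sum_f (-1)^{|f|}(\sigma_2,\sigma_1)\circ T\circ l_f$. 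The decisive combinatorial step is to identify $T\circ l_f$ at the level of vertices: $l_f$ sends the $p$-th vertex of $\Delta_{2k}$ to the vertex with grid position $(i_p,j_p)$, the $p$-th lattice point of $f$; the swap $T$ moves it to grid position $(j_p,i_p)$, which is exactly the $p$-th lattice point of the path $\overline f$ obtained by reflecting $f$ across the main diagonal. Hence $T\circ l_f = l_{\overline f}$, and since reflection is an involutive bijection on the set of monotone paths, reindexing the sum by $g=\overline f$ changes nothing.

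It then remains to compare $|\overline f|$ with $|f|$. Reflection across the diagonal carries a unit cell lying below (to the right of) $f$ to a cell lying above (to the left of) $\overline f$, and as the $k\times k$ grid has $k^2$ cells in all, this gives $|\overline f| = k^2 - |f|$. Substituting and using $(-1)^{-|f|}=(-1)^{|f|}$ yields
\[
\tau_\sharp(\sigma_1\times\sigma_2) = (-1)^{k^2}\sum_g (-1)^{|g|}(\sigma_2,\sigma_1)\circ l_g = (-1)^{k^2}\,(\sigma_2\times\sigma_1),
\]
and since $k^2\equiv k\pmod 2$ this equals $(-1)^k(\sigma_2\times\sigma_1)$, completing the reduction. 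The main obstacle is precisely the two combinatorial claims $T\circ l_f = l_{\overline f}$ and $|\overline f| = k^2 - |f|$: one must keep straight how the relabeling of factor-vertices after the swap corresponds to reflecting the indexing path, and verify that the below/above cell count is genuinely interchanged by the reflection. Everything else is purely formal functoriality.
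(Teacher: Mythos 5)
Ihr Beweis ist korrekt und verwendet im Wesentlichen denselben Weg wie die Arbeit: die an der Diagonalen gespiegelten Wege $\bar f$ mit $T\circ l_f=l_{\bar f}$ und die Identit�t $|f|+|\bar f|=k^2$, kombiniert mit $p_\tau^X\circ\tau=p_\tau^X$. Sie machen lediglich die Funktorialit�tsreduktion $(p_\tau^X)_\sharp=(p_\tau^X)_\sharp\circ\tau_\sharp$ und die Vertauschungsabbildung $T$ expliziter, was der Argumentation der Arbeit aber inhaltlich entspricht.
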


\begin{bew}
 	            Zu jedem Weg $f$ im $k\times k$-Gitter ist der an der Diagonalen gespiegelte Weg $\bar{f}$ ungleich $f$ und es gilt:
							\[\tau\left(\left(\sigma_2,\sigma_1\right)\circ l_f \right)=\left(\sigma_1,\sigma_2\right)\circ l_{\bar{f}}.
							\] Betrachte  
							\begin{eqnarray*}
							\tau(\sigma_2\times\sigma_1) &=& \tau \left( \sum_f{(-1)^{|f|}(\sigma_2,\sigma_1)\circ l_f} \right)\\
																					 &=& \sum_f{(-1)^{|f|}\tau(\sigma_2,\sigma_1)\circ l_f}	\\
																					 &=& \sum_f{(-1)^{|f|}(\sigma_1,\sigma_2)\circ l_{\bar{f}}}.\\
							\end{eqnarray*}
							Da $|\bar{f}|+|f|=k^2$ gelten muss, folgt:\\ Ist $k$ gerade, so ist $|f|$ genau dann gerade (respektive ungerade), wenn es $|\bar{f}|$ ist.\\ Ist 								$k$ ungerade, so  ist $|f|$ genau dann gerade (beziehungsweise ungerade), wenn $|\bar{f}|$ ungerade (beziehungsweise gerade) ist.  Damit ist 
							\begin{eqnarray*}
							\tau(\sigma_2\times\sigma_1)&=& \sum_f{(-1)^{|\bar{f}|}(\sigma_1,\sigma_2)\circ l_{\bar{f}}}\cr
																					&=& \sigma_1\times \sigma_2 \text{\hspace{3,85em}, falls $k$ gerade	und}
							\end{eqnarray*}							
							\begin{eqnarray*}																												
							\tau(\sigma_2\times\sigma_1)&=& (-1)\sum_f{(-1)^{|\bar{f}|}(\sigma_1,\sigma_2)\circ l_{\bar{f}}} \cr 
																					&=& (-1)(\sigma_1\times\sigma_2) \text{\hspace{1,3em}  , falls $k$ ungerade.}     
							\end{eqnarray*}Daraus folgt die Behauptung.
		
\end{bew}

\section{Definition und Existenz in der \v{C}ech-Homologie}

Obiges Lemma zeigt, dass die nun folgende Definition nur für gerade Dimensionen sinnvoll formuliert werden kann.
\begin{defi}[Dachabbildung für singuläre Ketten] 
Sei $k\in\mathbb Z$ eine gerade Zahl und sei $\, C_k^{sing}(X,A;\mathbb Z)$ die $k$-te singuläre Kettengruppe von $(X,A)$. Definiere 

\[ 
\widehat{\hspace{0,5em}\cdot\hspace{0,5em}}\, \colon \, C_k^{sing}(X,A;\mathbb Z)\rightarrow C_{2k}^{sing}(\widetriangle{(X,A)};\mathbb Z)
\]
durch

\[
\sigma=\sum_{i=1}^n {g_i\sigma_{i}} \mapsto \widehat{\sigma} :=
 \sum\limits_{\genfrac{}{}{0pt}{}{i<j}{1\leq i,j \leq n}}{g_i g_j(p_{\tau}^X)_{\sharp}(\sigma_i \times \sigma_j)},
\]wobei $\times$ das simpliziale Kreuzprodukt und $(p_{\tau}^X)_{\sharp}$ die von der Projektion $p_{\tau}^X:X\times X \rightarrow (X \times X)/\tau$ induzierte Kettenabbildung ist. 

\end{defi}

Die Dachabbildung ist wohldefiniert, denn zum einen setzt sie sich aus wohldefinierten Abbildungen zusammen, zum anderen ist sie von der Summationsreihenfolge unabhängig. Denn sei $\pi$ eine Permutation der Menge $\{1,\ldots,n\}$, dann gilt:
\[
\sum\limits_{\genfrac{}{}{0pt}{}{i<j}{1\leq i,j \leq n}}{g_i g_j (p_{\tau}^X)_{\sharp}(\sigma_i \times \sigma_j)}=\sum\limits_{\genfrac{}{}{0pt}{}{i<j}{1\leq i,j \leq n}}{g_{\pi(i)}g_{\pi(j)}(p_{\tau}^X)_{\sharp}(\sigma_{\pi(i)}\times\sigma_{\pi(j)})},
\]
was aus der nur für gerade $k$ gültigen Gleichung $(p_{\tau}^X)_{\sharp}(\sigma_i\times\sigma_j)=(p_{\tau}^X)_{\sharp}(\sigma_j\times\sigma_i)$ folgt. 

Um zu beweisen, dass die Definition der Dachabbildung auf Kettenebene eine Abbildung in \v{C}ech-Homologie induziert, ist es erforderlich, relative Homologie zu betrachten. Folgende Definition beziehungsweise der Fakt, dass diese eine injektive Abbildung liefert, werden später sehr nützlich sein.

\bigskip

\begin{defi} \label{injAbb}
Sei $X$ ein topologischer Raum, $V \subset X$ und offene Mengen $ \{U_{\lambda}\}_{\lambda \in \Lambda} \subset X $ gegeben, so dass \begin{itemize}
	\item  $V\cap U_{\lambda}=\emptyset \text{, also } V\subset X\setminus U_{\lambda}$ für alle $\lambda \in \Lambda$ gilt und
	\item  die Mengen $ \{U_{\lambda}\}_{\lambda \in \Lambda}$ $X\setminus V$ überdecken. 
\end{itemize}
Ausgehend von der Familie kurzer exakter Tripel-Sequenzen (des Tripels $(X,X\setminus U_\lambda, V))$ von Kettenkomplexen\begin{equation}
\left\{ 0 \rightarrow C_{\ast}^{sing}(X\setminus U_{\lambda},V;\mathbb Z)\stackrel {i^{\lambda}_{\sharp}} \longrightarrow C_{\ast}^{sing}(X,V;\mathbb Z)\stackrel {\pi^{\lambda}} \longrightarrow C_{\ast}^{sing}(X,X\setminus U_{\lambda};\mathbb Z)\rightarrow 0 \right\}_{\lambda\in\Lambda}
\label{exakt}
\end{equation}
definieren wir die Abbildung \[
\alpha \colon C_{\ast}^{sing}(X,V;\mathbb Z)\longrightarrow \prod_{\lambda \in \Lambda}{C_{\ast}^{sing}(X,X\setminus U_{\lambda};\mathbb Z)}
\]
durch die Vorschrift\[
\sigma \longmapsto \left( \alpha_{\sigma} \colon \Lambda \rightarrow \bigcup_{\lambda \in \Lambda}C_{\ast}^{sing}(X,X\setminus U_{\lambda};\mathbb Z), \lambda \mapsto \pi^{\lambda}(\sigma)   \right).
\]
\end{defi}

\begin{lem}[Injektivität] Die eben definierte Abbildung ist injektiv.
\end{lem}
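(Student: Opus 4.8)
The plan is to identify the kernel of $\alpha$ explicitly and show it is trivial. Since $\alpha(\sigma)$ is the family $\left(\pi^\lambda(\sigma)\right)_{\lambda\in\Lambda}$, an element lies in $\ker\alpha$ precisely when $\pi^\lambda(\sigma)=0$ for every $\lambda$, so that $\ker\alpha=\bigcap_{\lambda\in\Lambda}\ker\pi^\lambda$. For each $\lambda$ the exactness of the sequence (\ref{exakt}) gives $\ker\pi^\lambda=\operatorname{im}i^\lambda_\sharp$, and since $i^\lambda_\sharp$ is injective it identifies $C_{\ast}^{sing}(X\setminus U_\lambda,V;\mathbb Z)$ with a subgroup of $C_{\ast}^{sing}(X,V;\mathbb Z)$. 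Hence $\ker\alpha=\bigcap_{\lambda}C_{\ast}^{sing}(X\setminus U_\lambda,V;\mathbb Z)$, the intersection being taken inside $C_{\ast}^{sing}(X,V;\mathbb Z)$. The whole statement is thereby reduced to showing that this intersection vanishes.

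The key observation is that relative singular chain groups are free abelian with a basis described purely by the support of simplices: $C_{\ast}^{sing}(X,V;\mathbb Z)$ is free abelian on the singular simplices of $X$ whose image is \emph{not} contained in $V$, and under $i^\lambda_\sharp$ the subgroup $C_{\ast}^{sing}(X\setminus U_\lambda,V;\mathbb Z)$ corresponds exactly to those basis elements whose image lies in $X\setminus U_\lambda$. So for a given $\sigma\in\ker\alpha$ I would fix its canonical representative $\sigma=\sum_i g_i\sigma_i$ with distinct simplices $\sigma_i$, none of which maps into $V$, and then argue one simplex at a time using the uniqueness of coordinates in a free abelian group.

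For this representative, membership $\sigma\in C_{\ast}^{sing}(X\setminus U_\lambda,V;\mathbb Z)$ forces every $\sigma_i$ with $g_i\neq 0$ to have image contained in $X\setminus U_\lambda$, i.e.\ to avoid $U_\lambda$. As this holds for all $\lambda$ simultaneously, each such $\sigma_i$ has image in $X\setminus\bigcup_{\lambda}U_\lambda$. The covering hypothesis $X\setminus V\subset\bigcup_{\lambda}U_\lambda$ yields $X\setminus\bigcup_{\lambda}U_\lambda\subset V$, so every $\sigma_i$ maps into $V$. This contradicts the choice of the representative unless the sum is empty, whence $\sigma=0$ in $C_{\ast}^{sing}(X,V;\mathbb Z)$ and $\alpha$ is injective.

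The only delicate point, and the step I would write out most carefully, is the bookkeeping between the quotient description of relative chains and the subgroup $\operatorname{im}i^\lambda_\sharp$: one must verify that $\sigma\in C_{\ast}^{sing}(X\setminus U_\lambda,V;\mathbb Z)$ really is equivalent to possessing a representative supported away from $U_\lambda$, which rests on the inclusion $V\subset X\setminus U_\lambda$ (so that $C_{\ast}^{sing}(V)\subset C_{\ast}^{sing}(X\setminus U_\lambda)$ and the two quotients are compatible). Once this identification is secured, the covering condition closes the argument at once, and no homological input beyond the exactness of (\ref{exakt}) is required.
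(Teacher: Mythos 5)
Your proof is correct and follows essentially the same route as the paper: reduce to $\ker\alpha=\bigcap_\lambda\operatorname{im}i^\lambda_\sharp$ via exactness of (\ref{exakt}) and then invoke the covering condition. You merely spell out the final step (the free-abelian-basis bookkeeping showing that a chain supported in every $X\setminus U_\lambda$ is supported in $V$) which the paper's proof leaves implicit.
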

\begin{proof}
Sei $\sigma\in ker(\alpha)$, also $\sigma\in C^{sing}_{\ast}(X,V;\mathbb Z)$ mit $\alpha(\sigma)=\alpha_{\sigma}\equiv 0$, das heißt \[ \pi^{\lambda}(\sigma)=0\in C_{\ast}^{sing}(X,X\setminus U_{\lambda};\mathbb Z) \text{ für alle } \lambda\in\Lambda.\]
Die Exaktheit der Sequenz (\ref{exakt}) liefert für alle $\lambda\in\Lambda$ ein Element $\sigma_{\lambda}\in C^{sing}_{\ast}(X\setminus U_{\lambda},V;\mathbb Z)$ mit \begin{equation}
i^{\lambda}_{\sharp}(\sigma_{\lambda})=i^{\lambda}\circ\sigma_{\lambda}=\sigma. 
\label{inklusion}
\end{equation}
Da die Abbildung $i_{\sharp}^{\lambda}$ durch die Inklusion $i^{\lambda}\colon X\setminus U_{\lambda}\hookrightarrow X$ induziert wird, bedeutet Gleichung (\ref{inklusion}), dass $\sigma$ bereits ein Element von $C_{\ast}^{sing}(X\setminus U_{\lambda},V;\mathbb Z)$ ist, und zwar für alle $\lambda \in \Lambda$. Wegen der Überdeckungseigenschaft der Mengen $\{U_{\lambda}\}_{\lambda \in \Lambda}$ heißt das aber, dass $\sigma=0$ in $C_{\ast}^{sing}(X,V;\mathbb Z)$ ist. Damit ist gezeigt, dass $ker(\alpha)=0$, also $\alpha$ injektiv ist.
\end{proof}

Schließlich ist nun zu zeigen, dass die Dachabbildung auch in \v{C}ech-Homologie existiert:

\begin{satz} Sei $k \in\mathbb Z$ eine gerade Zahl. Vereinbarungsgemäß sei weiter $(X,A)$ ein kompaktes Paar einer glatten, kompakten $\partial-$Mannigfaltigkeit.  Dann induziert die Dachabbildung  $\widehat{\cdot}\, \colon \, C_k^{sing}(X,A;\mathbb Z)\rightarrow C_{2k}^{sing}(\widetriangle{(X,A)};\mathbb Z)$ für singuläre Ketten eine Dachabbildung \[ 
\widehat{\cdot} \,\colon \check{H}_{k}(X,A;\mathbb Z)\rightarrow \check{H}_{2k}\left(\widetriangle{(X,A)};\mathbb Z \right)
\]
in \v{C}ech-Homologie.
\end{satz}

\begin{proof}
Es ist hier $ \check{H}_{k}(X,A;\mathbb Z)\cong H_{k}^{sing}(X,A;\mathbb Z).$\footnote{Siehe beispielsweise \cite{MR1335915}(Prop. 13.17).} Weiterhin ist die \v{C}ech-Homologie isomorph zum inversen Limes über die relative singuläre Homologie offener Umgebungen, wie im ersten Kapitel gezeigt wurde: 
\[
\check{H}_{2k}\left(\widetriangle{(X,A)};\mathbb Z \right) \cong \varprojlim \left\{H_{2k}^{sing}(\widetriangle{X}, V_{\widetriangle{A}};\mathbb Z) \mid  V_{\widetriangle{A}}   \supset \widetriangle{A} \text{  offen} \right\}.
\]
Daher ist für $\sigma \in H_k^{sing}(X,A;\mathbb Z)$, also $\sigma=\sum_{i=1}^n{g_i\sigma_i} \in C_k^{sing}(X,A;\mathbb Z)$ mit $\partial\sigma=0$, zu betrachten, was mit $\hat{\sigma}$ in $H_{2k}^{sing}(\widetriangle{X},V_{\widetriangle{A}};\mathbb Z) $ passiert für ein offenes $V_{\widetriangle{A}} \supset \widetriangle{A}$.\\
Wähle eine Menge $W_{\widetriangle{A}}\subset V_{\widetriangle{A}}$ so aus, dass  $\bigtriangleup_X \subset (p_\tau^X)^{-1}(\widetriangle{A})\subset(p_\tau^X)^{-1}(W_{\widetriangle{A}})$ und $\overline{W_{\widetriangle{A}}}\subset V_{\widetriangle{A}}$ gilt. Setze von nun an $ V := (p_\tau^X)^{-1}(W_{\widetriangle{A}})$.\\
Da außerhalb der Diagonalen $p_{\tau}^X|_{X\times X\setminus\bigtriangleup_X}\colon X\times X\setminus\bigtriangleup_X\rightarrow \widetriangle{X}\setminus p_{\tau}^X(\bigtriangleup_X)$ eine Zweiblättrige Überlagerung ist und insbesondere $\bigtriangleup_X \subset \overline{V}$ gilt, ist es möglich, für jedes  $x\in X\times X\setminus\overline{V}$ eine offene Umgebung $U_x$ disjunkt zu $\overline{V}$ zu finden, so dass $p_{\tau}^X|_{U_x}\colon U_x\stackrel{\approx}{\rightarrow}p_{\tau}^X(U_x)$ ein Homöomorphismus ist. Zu bemerken ist, dass dann $\overline{W_{\widetriangle{A}}}\subset\widetriangle{X}\setminus p_\tau^X(U_x)$ gilt für alle $x\in X\times X\setminus V$.\\
Wir können annehmen, dass $\sigma$ so unterteilt ist, dass entweder
\begin{eqnarray}	 \label{schnitt1} 
im(\sigma_i\times \sigma _j) & \subset & V \notag \\
	 \text{oder} &  & \\
	   im(\sigma_i\times \sigma_j)\cap \bigtriangleup_X & = & \emptyset \notag
\end{eqnarray} 
gilt $\forall i,j \in \{1,\ldots,n\}$.

\noindent Sei nun $x\in X\times X \setminus\overline{V}$ fest gewählt. Betrachte\[
\pi^x\colon C_{2k}^{sing}(\widetriangle{X}, \overline{W_{\widetriangle{A}}} ;\mathbb Z)\rightarrow C_{2k}^{sing}(\widetriangle{X},\widetriangle{X}\setminus p_\tau ^X(U_x);\mathbb Z),
\]welches wir schon zur Definition der Abbildung in Definition \ref{injAbb} benutzt haben. (Durch das Dazwischenschalten einer weiteren Projektion kann $(p_{\tau}^X)_{\sharp}(\sigma\times\sigma)$ als Element von $C_{2k}^{sing}(\widetriangle{X}, \overline{W_{\widetriangle{A}}} ;\mathbb Z)$ angesehen werden.)\\
Es ist
\begin{eqnarray}
\pi^x\left((p_{\tau}^X)_{\sharp}(\sigma\times\sigma)\right)& = & \pi^x\left( \sum_{1\leq i,j\leq n}{g_i g_j(p_{\tau}^X)_{\sharp}(\sigma_i\times \sigma _j)}\right) \notag \\
& \stackrel{(I)} {=} & \pi^x\left(\sum\limits_{\genfrac{}{}{0pt}{}{i\neq j}{1\leq i,j\leq n}}{g_i g_j(p_{\tau}^X)_{\sharp}(\sigma_i\times\sigma_j)}\right) \notag \\
& \stackrel{ (II) } {=} & \pi^x\left( \sum\limits_{\genfrac{}{}{0pt}{}{i<j}{1\leq i,j\leq n}}{g_i g_j(p_{\tau}^X)_{\sharp}(\sigma_i \times\sigma_j)}\right) \notag \\
& = &\pi^x\left(\hat{\sigma}\right)\in C^{sing}_{2k}(\widetriangle{X},\widetriangle{X}\setminus p_{\tau}^X(U_x);\mathbb Z),
\label{lokal}
\end{eqnarray} 

\noindent wobei $(I)$ gilt, da $im(\sigma_i\times\sigma_i)\cap \bigtriangleup_X \neq\emptyset$ für alle $i\in\{1,\ldots,n\}$ ist und deshalb\linebreak[4]
 $im(\sigma_i\times\sigma_i)\subset V \subset X\times X \setminus U_x$ für alle $i$.
Gleichung $(II)$ gilt, denn die Abbildung  \[
B\colon\genfrac{\{}{\}}{0pt}{0}{(i,j) \in \{1,\ldots,n\}^2; i\neq j}{im(\sigma_i\times \sigma_j)\cap U_x \neq \emptyset}\rightarrow \genfrac{\{}{\}}{0pt}{0}{(i,j)\in\{1,\ldots,n\};i<j}{im((p_{\tau}^X)_{\sharp}(\sigma_i\times\sigma_j))\cap(p_{\tau}^X)(U_x)\neq \emptyset}
\]\[(i,j)\mapsto \left\{\begin{array}{ll} (i,j) & \text{ , falls $i<j$}\cr
														 (j,i) & \text{ , sonst} 
														\end{array}\right.
\] ist eine Bijektion. Um das zu zeigen, sei $(i,j)$ mit $i<j$ gegeben mit der Eigenschaft, dass $im(p_{\tau}^X)_{\sharp}(\sigma_i\times\sigma_j)\cap(p_{\tau}^X)(U_x)\neq \emptyset$. Es gibt also $p\in im(p_{\tau}^X)_\sharp(\sigma_i\times\sigma_j)=im(p_{\tau}^X)_\sharp(\sigma_j\times\sigma_i)$ und passend $q=(q_1,q_2)\in U_x$, so dass \[ p=(p_\tau^X)(q)\Leftrightarrow p=[(q_1,q_2),(q_2,q_1)] \in im(p_{\tau}^X)_\sharp(\sigma_i\times\sigma_j)=im(p_{\tau}^X)_\sharp(\sigma_j\times\sigma_i)
\] \[
\Rightarrow q\in U_x \cap im(\sigma_i \times\sigma_j) \text{ oder }q\in U_x\cap im(\sigma_j\times\sigma_i).\] Das bedeutet aber nichts anderes als \[ B(i,j)=(i,j) \text{ oder } B(j,i)=(i,j),\] also die Surjektivität von $B$. Injektivität folgt leicht. Die so bewiesene Gleichung hilft weiter:

\noindent Da $\partial\sigma=0$ ist, ist $\partial(\sigma\times\sigma)=0$ und wegen (\ref{lokal}) dann auch \begin{equation}
0=\partial\pi^x(\hat{\sigma}) \in C_{2k-1}^{sing}(\widetriangle{X},\widetriangle{X}\setminus p_{\tau}^X(U_x);\mathbb Z) \label{rand0}.
\end{equation}

\noindent Die Abbildung aus Definition \ref{injAbb} kann nun nützlich sein: Für\[
\alpha \colon C_{2k-1}^{sing}(\widetriangle{X},\overline{W_{\widetriangle{A}}};\mathbb Z)\rightarrow \prod_{x\in X\times X \setminus\overline{V}} C_{2k-1}^{sing}(\widetriangle{X},\widetriangle{X} \setminus p_{\tau}^X(U_x);\mathbb Z)  
\] 
gilt wegen (\ref{rand0})\[
\alpha(\partial \hat{\sigma})(x)=\alpha_{\partial \hat{\sigma}}(x)=\partial\pi^x(\hat{\sigma})=0\text{ für alle }x\in X\times X\setminus\overline{V}.
\]

\noindent Die Mengen $\{U_x\}_{x\in X\times X\setminus \overline{V}}$ erfüllen die Bedingungen aus Definition \ref{injAbb}, daher ist $\alpha$ injektiv. Es folgt $\partial\hat{\sigma}=0 \in C_{2k-1}^{sing}(\widetriangle{X},\overline{W_{\widetriangle{A}}};\mathbb Z)$. Die Bedingung $\overline{W_{\widetriangle{A}}}\subset V_{\widetriangle{A}}$ liefert dann\[
\partial\hat{\sigma}=0 \in C_{2k-1}^{sing}(\widetriangle{X},V_{\widetriangle{A}};\mathbb Z).
\]

Es bleibt die Wohldefiniertheit der Dachabbildung auf (\v{C}ech-)Homologieebene nachzuweisen. Für diesen Nachweis der Wohldefiniertheit seien $\sigma$ und $\mu$ mit $\left[ \sigma\right]=\left[ \mu\right] \in H_k^{sing}(X,A;\mathbb Z)$ gegeben. Es gibt also $\left[ \nu \right] \in H_{k+1}^{sing}(X,A,\mathbb Z)$, so dass $\partial\nu=\sigma-\mu$ ist. Passend zu Eigenschaft (\ref{schnitt1}) von $\sigma$ und $\mu$ kann für $\nu=\sum_{i=1}^m{k_i\nu_i}$ eine Unterteilung vorausgesetzt werden, so dass gilt:
\begin{equation}
	im((\nu_i\times\nu_i))\subset (p_\tau^X)^{-1}(V_{\widetriangle{A}})  \; \forall i \in \{1,\ldots,m\}.
	\label{schnitt2}
\end{equation}  
Zu zeigen ist, dass $\left[\hat{\sigma}\right]=\left[\hat{\mu}\right]=\left[\widehat{\sigma-\partial\nu}\right]\in H_{2k}^{sing}(\widetriangle{X},V_{\widetriangle{A}};\mathbb Z)$ gilt. Hierfür reicht es zu zeigen, dass \[ 
\left[\hat{\sigma}\right]=\left[\widehat{\sigma-\partial\nu_i}\right] \, \forall i\in \{1,\ldots,m\},  
\] denn die gewünschte Aussage folgt dann induktiv.
Da die Dachabbildung unabhängig von der Summationsreihenfolge ist, können nach eventueller Umnummerierung und Umbenennung $\sigma$ und $\partial\nu_i$ wie folgt dargestellt werden: \begin{eqnarray*}
\sigma & = & \sum_{i=1}^n{g_i\sigma_i}+\sum_{i=n+1}^N{g_i\sigma_i},\\
\partial\nu_i & = & \sum_{i=n+1}^N{g_i\sigma_i}+\sum_{i=N+1}^{N+k}{g_i\sigma_i}.	
\end{eqnarray*}
Hier ist es möglich, dass $n=N$ gilt, was dann bedeutet, dass $\sigma$ und $\partial\nu_i$ keine Simplizes gemeinsam haben. Es gilt:

\[\sigma-\partial\nu_i=\sum_{i=1}^n{g_i\sigma_i}+\sum_{i=N+1}^{N+k}{(-1)g_i\sigma_i}.
\]
Die Dachabbildung angewendet ergibt \begin{eqnarray*}
\hat{\sigma} & = & \sum_{1\leq i < j\leq n}{g_i g_j(p_{\tau}^X)_{\sharp}(\sigma_i\times\sigma_j)} + \sum_{1\leq i\leq n < j\leq N}{g_i g_j(p_{\tau}^X)_{\sharp}(\sigma_i\times\sigma_j)} + \Theta_1 \text{, wobei }\\
\Theta_1 & = & \sum_{n<i<j\leq N}{g_i g_j(p_{\tau}^X)_{\sharp}(\sigma_i\times\sigma_j)} \text{ und }\\
\widehat{\sigma-\partial\nu_i} & = & \sum_{1\leq i<j\leq n}{g_i g_j(p_{\tau}^X)_{\sharp}(\sigma_i\times\sigma_j)} \\ & & +\sum_{1\leq i\leq n\leq N <j \leq N+k}{(-1)g_i g_j(p_{\tau}^X)_{\sharp}(\sigma_i\times\sigma_j)} +\Theta_2 \text{, wobei}\\
\Theta_2 & = & \sum_{N<i<j\leq N+k}{g_i g_j(p_{\tau}^X)_{\sharp}(\sigma_i\times\sigma_j)} \text{ und damit }\\
	\end{eqnarray*}
	
\begin{eqnarray*}
\hat{\sigma} - \widehat{\sigma-\partial\nu_i} & = & \sum_{1\leq i\leq n<j\leq N+k}{g_i g_j(p_{\tau}^X)_{\sharp}(\sigma_i\times\sigma_j)}+\Theta_1-\Theta_2\\
  & = & (p_{\tau}^X)_{\sharp}\left(\sum_{i=1}^n{g_i\sigma_i}\times \sum_{j=n+1}^{N+k}{g_j\sigma_j}\right) +\Theta_1 -\Theta_2 +\Theta_3 -\Theta_3 \text{ , wobei}\\
\Theta_3 & = &  (p_{\tau}^X)_{\sharp}\left(\sum_{i=n+1}^N{g_i\sigma_i}\times \sum_{j=n+1}^{N+k}{g_j\sigma_j}\right).
\end{eqnarray*}

\noindent Daraus folgt \begin{eqnarray}
\hat{\sigma} - \widehat{\sigma-\partial\nu_i} & = & (p_{\tau}^X)_{\sharp}(\sigma\times \partial\nu_i) + \Theta_1 -\Theta_2 -\Theta_3 \notag \\
 & = & (p_{\tau}^X)_{\sharp}(\partial\nu_i\times\sigma) + \Theta_1 -\Theta_2 -\Theta_3 \notag \\ 
 & = & (p_{\tau}^X)_{\sharp}(\partial(\nu_i\times\sigma))+\Theta_1-\Theta_2-\Theta_3 \text{ , da }\partial\sigma=0 \notag \\ 
 & = & \partial((p_{\tau}^X)_{\sharp}(\nu_i\times\sigma))+\Theta_1-\Theta_2-\Theta_3. \label{diff}
 \end{eqnarray}

\noindent $\Theta_1, \Theta_2$ und $\Theta_3$ enthalten jeweils nur Summanden der Form $g_i g_j (p_{\tau}^X)_{\sharp}(\sigma_i\times\sigma_j)$, wobei $\sigma_i$ und $\sigma_j$ beides Simplizes aus $\partial\nu_i$ sind. Eigenschaft (\ref{schnitt2}) liefert somit, dass die Bilder\\ $im((p_{\tau}^X)_{\sharp}(\sigma_i\times\sigma_j))$, die in $\Theta_1,\Theta_2$ und $\Theta_3$ vorkommen, Teilmengen der Umgebung $V_{\widetriangle{A}}$ sind. Es ist also wegen (\ref{diff}) \[ 
\left[\hat{ \sigma}\right]=\left[\widehat{\sigma-\partial\nu_i}\right]\in H_{2k}(\widetriangle{X},V_{\widetriangle{A}};\mathbb Z),
\]
was die gewünschte Aussage liefert.\\
\noindent Insgesamt haben wir nun gezeigt, dass wir für $\sigma \in H_k^{sing}(X,A;\mathbb Z) $ und für eine feste, offene Umgebung $V_{\widetriangle{A}}\supset \widetriangle{A}$ ein wohldefiniertes Element $\hat{\sigma}=:\hat{\sigma}_{V_{\widetriangle{A}}}\in H^{sing}_{2k}(\widetriangle{X},V_{\widetriangle{A}},\mathbb Z)$ bekommen. Diese Wohldefiniertheit soll aber auch in der \v{C}ech-Homologie gelten.
Bezeichne mit $\Omega_{\widetriangle{A}}$ die Menge aller offenen Umgebungen von $\widetriangle{A}$ in $\widetriangle{X}$, gerichtet durch umgekehrte Inklusion. Satz \ref{invlim} auf Seite \pageref{invlim} liefert: \begin{eqnarray*}
\check{H}_k(\widetriangle{(X,A)}),\mathbb Z) & \cong & \varprojlim\{H_k^{sing}(\widetriangle{X},V_{\widetriangle{A}};\mathbb Z) |                                                                                     V_{\widetriangle{A}}\supset \widetriangle{A}\} \cr
	                                           & \cong & \{(\sigma_{V_{\widetriangle{A}}})_{V_{\widetriangle{A}}\in \Omega_{\widetriangle{A}}}\in                                                            \prod\limits_{V_{\widetriangle{A}}\in\Omega_{\widetriangle{A}}}                                                                                      H_k^{sing}(\widetriangle{X},V_{\widetriangle{A}}, \mathbb Z) |
                                                                                   (i^{V_{\widetriangle{A}}}_{W_{\widetriangle{A}}})_{\ast}\sigma_{V_{\widetriangle{A}}}                                                               =\sigma_{W_{\widetriangle{A}}} \text{ }\forall \text{ }                                     
	          	                                           V_{\widetriangle{A}} \subset W_{\widetriangle{A}} \} , \cr
\end{eqnarray*} wobei $(i^{V_{\widetriangle{A}}}_{W_{\widetriangle{A}}})_{\ast}$ die von der Inklusion $i^{V_{\widetriangle{A}}}_{W_{\widetriangle{A}}}\colon V_{\widetriangle{A}}\hookrightarrow W_{\widetriangle{A}} $ induzierte Abbildung ist.
Bilde das Element\[
\check{\hat{\sigma}}:=(\hat{\sigma}_{V_{\widetriangle{A}}})_{V_{\widetriangle{A}}\in \Omega_{\widetriangle{A}}}\in \prod\limits_{V_{\widetriangle{A}}\in               \Omega_ {\widetriangle{A}}} H_k^{sing}(\widetriangle{X},V_{\widetriangle{A}}, \mathbb Z),
\]welches auf Grund der Wohldefiniertheit aller seiner Komponenten selbst wohldefiniert ist. Weiterhin gilt aber auch \[ (i^{V_{\widetriangle{A}}}_{W_{\widetriangle{A}}})_{\ast}\hat{\sigma}_{V_{\widetriangle{A}}}=\hat{\sigma}_{W_{\widetriangle{A}}} \text{ }\forall \text{ } V_{\widetriangle{A}} \subset W_{\widetriangle{A}}.
\]
Das bedeutet aber nichts anderes, als dass $\check{\hat{\sigma}}$ wohldefiniert in $\check{H}_k(\widetriangle{(X,A)}),\mathbb Z)$ ist.

\end{proof}

\section{Eigenschaften}
Im Folgenden soll erläutert werden, warum es sinnvoll ist, die Dachabbildung konstruiert zu haben. Hierfür ist ein wenig Vorarbeit nötig. Die Zusammenfassung der Ergebnisse, die "`Giebelmannigfaltigkeiten"' und ihre Fundamentalklassen betreffen, beruft sich auf \cite{Hage2006}. Dort wurde allerdings mit $\mathbb Z_2$-Koeffizienten gearbeitet, also Orientierung vernachlässigt. Aus diesem Grund wird hier zunächst die Orientierung in unserem speziellen Zusammenhang näher beleuchtet.

\subsection{Orientierung}

\begin{satz}
Sei $M$ eine orientierte Mannigfaltigkeit der Dimension $dim(M)=k$. Die koordinatenvertauschende Involution $\tau:M\times M\rightarrow M\times M$, $(x,y)\mapsto(y,x)$ ist orientierungserhaltend für gerade Dimensionen $k$ und orientierungsumkehrend für ungerade Dimensionen $k$.
\end{satz}

\begin{proof}
$M$ ist orientiert, das heißt, dass es für alle $x\in M$ eine lokale Orientierung $\mu_x\in H_k(M,M\setminus x)$ gibt, für die eine Umgebung $U\ni x$ und ein Element $\mu_U\in H_k(M,M\setminus U)$ existieren, so dass für alle $y\in U$ gilt: \[
\mu_U\mapsto \mu_y
\] durch die Abbildung\[
H_k(M,M\setminus U)\rightarrow H_k(M,M\setminus y).\]
$M\times M$ ist dann auch orientiert:

\noindent Sei $(p,q)\in M\times M$. 
Wähle für $x=(p,q)\in M\times M$ die lokale Orientierung $\mu_x=\mu_p\times\mu_q$, wobei $\mu_p$ die lokale Orientierung an $p$, $\mu_q$ die lokale Orientierung an q in $M$ und $"\times"$ das vom simplizialen Kreuzprodukt kommende homologische Kreuzprodukt ist.\\
Es ist $(p,q)\in U_p\times U_q=:U_{(p,q)}$ und $\mu_{U_{(p,q)}}:=\mu_{U_p}\times \mu_{U_q},$ wobei $M\supset U_p\ni p$ und $M\supset U_q \ni q$ so gewählt sind, dass $\mu_{U_{p}}\mapsto\mu_{\bar{p}}$ durch die Abbildung $H_k(M,M\setminus U_p)\rightarrow H_k(M,M\setminus \bar{p})$ und $\mu_{U_{q}}\mapsto\mu_{\bar{q}}$ durch die Abbildung $H_k(M,M\setminus U_q)\rightarrow H_k(M,M\setminus \bar{q})$ für alle $\bar{p}\in U_p, \bar{q}\in U_q$. Betrachte das folgende kommutative Diagramm: \[
\begin{xy}
\xymatrix{H_{2k}(M\times M,M\times M\setminus U_{(p,q)}) \ar[d]^{\cong} \ar[r]^{\alpha} & H_{2k}(M\times M,M\times M\setminus(\bar{p},\bar{q}))\ar[d]^{\cong} \\
          H_{2k}(M,M\setminus U_p)\otimes H_{2k}(M,M\setminus U_q \ar[r]) & H_{2k}(M,M\setminus \bar{p})\otimes H_{2k}(M,M\setminus \bar{q})}
\end{xy} \]
Die senkrechten Isomorphismen sind durch die Künneth-Formel begründet. Dieses Diagramm und die obigen Eigenschaften belegen, dass $\alpha(\mu_{U_{(p,q)}})=\mu_{\bar{p}}\times\mu_{\bar{q}}=\mu_{(\bar{p},\bar{q})}$ für alle $(\bar{p},\bar{q})\in U_{(p,q)}$.\\
Also kann $M\times M$ wie oben definiert orientiert werden.
In Lemma \ref{eigkreuz} haben wir gezeigt, dass \[
\begin{array}{llrl}
 \tau(\mu_p\times\mu_q)  &=&  \mu_q\times\mu_p\text{ }    &\text{ , falls k gerade und}\cr
   \tau(\mu_p\times\mu_q)&=& -(\mu_q\times\mu_p) &\text{ , falls k ungerade}\cr
\end{array}\] gilt, was die Behauptung liefert.

\end{proof}

Als lokale Orientierung für $\left[(p,q),(q,p)\right] \in \widetriangle{M}=M\times M / \tau$ kann also in dem Fall, in dem $dim(M)$ gerade ist, $(p_\tau^M)_\ast(\mu_p\times\mu_q)$ gewählt werden. Hierbei ist $(p_\tau^M)_\ast$ die von der Projektion $p\colon M\times M\rightarrow \widetriangle{M}$ induzierte Abbildung. Mit dem obigen Ergebnis ist diese Wahl wohldefiniert und damit wissen wir, dass für gerade-dimensionale, orientierte Mannigfaltigkeiten M auch $\widetriangle M$ orientierbar ist.\\
Bis zu diesem Zeitpunkt wissen wir allerdings nicht, ob für eine Mannigfaltigkeit $M$ auch $M\times M$ oder $\widetriangle{M}$ eine Mannigfaltigkeitsstruktur aufweisen. Diese Probleme sollen im Folgenden erläutert werden.

\subsection{Giebelmannigfaltigkeit und Fundamentalklasse}

Für randlose, glatte Mannigfaltigkeiten $M$, das heißt glatte Mannigfaltigkeiten mit\linebreak[4] $\partial M=\emptyset$, kann ohne großen Aufwand gezeigt werden, dass $M\times M$ die Mannigfaltigkeits\-struktur von $M$ erbt.\\
Im Fall von berandeten, glatten Mannigfaltigkeiten $M$ ist es mit größerem Aufwand verbunden zu zeigen, dass auch $(M,\partial M)\times(M,\partial M)$ eine berandete, glatte Mannigfaltigkeits\-struktur aufweist. Es wird die Begriffsbildung eines Kragens einer Mannigfaltigkeit und das Verfahren der Winkelglättung benötigt. Hierfür sei auf \cite{MR0358848}(Abschnitt 13) verwiesen.\\
Die koordinatenvertauschende Involution $\tau\colon M\times M\rightarrow M\times M$ ist nicht fixpunktfrei, deswegen trägt $M\times M/\tau=\widetriangle{M}$ auf den ersten Blick nicht die Struktur einer Mannigfaltigkeit. Indem die Fixpunktmenge, also die Diagonale $ \bigtriangleup_{M}$, "`herausgeschnitten wird"', kann dieses Problem gelöst werden. Um genauer zu verstehen, inwiefern dann "`das verminderte $\widetriangle{M}$"' eine Mannigfaltigkeitsstruktur aufweist, sei hier zunächst an die Definition des transversalen Schnittes zweier Untermannigfaltigkeiten erinnert.\\
Dieser Abschnitt soll vor allem eine Zusammenfassung der benötigten Ergebnisse liefern. Details und Beweise können in \cite{Hage2006} nachgelesen werden.

\begin{defi}[Transversaler Schnitt von Untermannigfaltigkeiten]
Sei $W$ eine glatte Mannigfaltigkeit. Die eingebetteten Untermannigfaltigkeiten $M_1$ und $M_2$ schneiden sich transversal, falls für alle Punkte $p\in M_1\cap M_2$ der Tangentialraum $T_pW$ die Summe der Tangentialräume $T_pM_1$ und $T_pM_2$ ist. Das soll hier heißen:\[
T_pW=T_pM_1+T_pM_2:=\left\{v+w | v\in T_pM_1, w\in T_pM_2\right\}.\] Man notiert $M_1\pitchfork M_2.$
\end{defi}

Die nachfolgende Bemerkung unterstreicht, dass die Definition sinnvoll ist:

\begin{bem} Der transversale Schnitt zweier Untermannigfaltigkeiten einer glatten Mannigfaltigkeit ist selbst eine Untermannigfaltigkeit.
\end{bem}

Der transversale Schnitt wird benötigt, um die Art und Weise, wie die Diagonale herausgeschnittenen werden muss, zu spezifizieren:

\begin{satz}[Verminderte Giebelmannigfaltigkeiten]\label{vergieb}
Sei $W$ eine glatte, kompakte, berandete Mannigfaltigkeit und $(M,\partial M)\subset (W,\partial W)$ eine eingebettete Untermannigfaltigkeit mit Rand $\partial M=M\pitchfork \partial W$. Sei weiterhin $(U,\partial U)$ eine berandete Umgebung der Diagonalen $\bigtriangleup_{W}\subset W\times W$ mit transversalem Schnitt $\partial U\pitchfork (M\times M)$ und Innerem $\ring{U}=U\setminus\partial U$.\\
Dann ist der verminderte Giebel \[
\widetriangle{\left(M,\partial M\right)_{\ring{U}}}:=\left(\widetriangle{M}\setminus p_\tau^M\left(\ring{U}\right),\widetriangle{\partial M}\setminus p_\tau^M\left(\ring{U} \cap \left(\partial M\times \partial M\right)\right)\right)
\]
eine glatte, kompakte $2dim(M)-$dimensionale $\partial$-Mannigfaltigkeit.
\end{satz}

Sind $(W,\partial W)$ und $(M,\partial M)\subset (W,\partial W)$ sogar zusätzlich orientiert und besitzen gerade Dimension, so erbt $\widetriangle{\left(M,\partial M\right)_{\ring{U}}}$ die Orientierung wie im vorherigen Abschnitt beschrieben. Das und die Kompaktheit der verminderten Giebelmannigfaltigkeit liefern die Existenz einer wohldefinierten Fundamentalklasse \[
\left[ \widetriangle{\left(M,\partial M\right)_{\ring{U}}}\right] \in H^{sing}_{2dim(M)}\left(\widetriangle{\left(M,\partial M\right)_{\ring{U}}};\mathbb Z\right).
\]

In \v{C}ech-Homologie liefert das ein sehr brauchbares Ergebnis:

\begin{satz}\label{cefu}
Sei $(M,\partial M)\subset (W,\partial W)$ eine eingebettete, kompakte, gerade-dimensionale Untermannigfaltigkeit mit den Eigenschaften aus Satz \ref{vergieb}. Zudem existiere eine Orientierung. Dann besitzt der Giebel $\widetriangle{\left(M,\partial M\right)}$ eine nicht triviale, relative \v{C}ech-Homologieklasse\[
\left[\widetriangle{\left(M,\partial M\right)}\right] \in \check{H}_{2dim(M)}\left(\widetriangle{\left(M,\partial M\right)};\mathbb Z\right).
\]
\end{satz}

\begin{bew}[Idee]
Bemerke zunächst, dass die Menge der Umgebungen, die die Eigenschaften aus Satz \ref{vergieb} erfüllen, (stark) kofinal in der gerichteten Menge aller offenen Umgebungen der Diagonale liegen. Das heißt, dass zu einer beliebigen offenen Umgebung $V$ der Diagonalen eine Umgebung $U\subset V$ der Diagonalen gefunden werden kann, die die Eigenschaften aus Satz \ref{vergieb} erfüllt. Da jede Umgebung symmetrisiert werden kann, gehen wir hier immer von bereits symmetrischen Umgebungen aus.
Es ist\[
\check{H}_{2dim(M)}\left(\widetriangle{\left(M,\partial M\right)};\mathbb Z\right)\cong \varprojlim\left\{H_{2dim(M)}^{sing}\left(\widetriangle{M},V;\mathbb Z\right) \mid V\supset \widetriangle{\partial M} \text{ offen } \right\}.
\]
Per Definition enthalten die Umgebungen $V$ auch immer die Diagonale $\bigtriangleup_{M}$. Für jede dieser Umgebungen $V$ erhalten wir also $U\subset V$, welches zusätzlich die Eigenschaften aus Satz \ref{vergieb} erfüllt. Für dieses existiert eine (nicht triviale) Fundamentalklasse\[
\left[\widetriangle{\left(M,\partial M\right)}_{\ring{U}}\right]\in H_{2dim(M)}^{sing}\left(\widetriangle{(M,\partial M)}_{\ring{U}};\mathbb Z \right).
\]Für verschiedene Umgebungen $V_2\subset V_1$, gibt es dann auch Umgebungen $U_2\subset U_1$ und die Fundamentalklassen werden von den durch die Inklusionen induzierten Homomorphismen aufeinander abgebildet. Dadurch erhalten wir für offene Umgebungen $V$ von $\widetriangle{\partial M}$ nicht triviale Homologieklassen in $H_{2dim(M)}^{sing}(\widetriangle{M},V;\mathbb Z)$, welche ebenso die Eigenschaft haben, von den jeweiligen Inklusionen aufeinander abgebildet zu werden.\\ Diese sind wohldefiniert, da für zwei verschiedene Wahlen der Mengen $U\subset V$, die die Eigenschaften aus Satz \ref{vergieb} erfüllen, eine weitere Menge $\tilde{U}$ gefunden werden kann, die die entsprechenden Eigenschaften erfüllt, und die beide vorherigen Mengen enthält.\\ Es kann dann gezeigt werden, dass die von $\tilde{U}$ kommende Homologieklasse zu beiden vorher resultierenden Homologieklassen homolog ist, was die beiden zu homologen Homologieklassen macht.\\
Aus Satz \ref{invlim} ist die Gestalt des hier zu betrachtenden Limes bekannt, daher ist nun einzusehen, dass das Element, das aus den im vorherigen Absatz konstruierten Homologieklassen besteht, ein nicht triviales Element in \[\varprojlim\left\{H_{2dim(M)}^{sing}\left(\widetriangle{M},V;\mathbb Z\right) \mid V\supset \widetriangle{\partial M} \text{ offen } \right\}\cong \check{H}_{2dim(M)}\left(\widetriangle{\left(M,\partial M\right)};\mathbb Z\right)\text{ ist. }\]
\end{bew}

Diese Vorarbeit rechtfertigt, dass wir $\widetriangle{\left(M,\partial M\right)}$ als \textit{Giebelmannigfaltigkeit} mit der Klasse aus Satz \ref{cefu} als \textit{Fundamentalklasse} bezeichnen. Nun wird allmählich klar, was für eine wünschenswerte Eigenschaft die Dachabbildung hat:\\ Sie bildet unter den passenden Voraussetzungen die Fundamentalklasse einer Mannigfaltigkeit $M$ auf die Fundamentalklasse der zugehörigen Giebelmannigfaltigkeit ab.

\begin{satz}
Sei $W$ eine glatte, kompakte, berandete, orientierte Mannigfaltigkeit und $(M,\partial M)\subset (W,\partial W)$ eine eingebettete Untermannigfaltigkeit wie in Satz \ref{vergieb} gefordert. Diese habe weiterhin gerade Dimension k. Dann gilt \[ 
\widehat{\left[M,\partial M\right]}_{\mathbb Z}=\left[\widetriangle{(M,\partial M)}\right],
\]
wobei  $\left[M,\partial M\right]_{\mathbb Z}$ die Fundamentalklasse von $(M,\partial M)$ und $\left[\widetriangle{(M,\partial M)}\right]\in \check{H}_{2k}(\widetriangle{(M,\partial M)};\mathbb Z)$ die Fundamentalklasse der Giebelmannigfaltigkeit $\widetriangle{(M,\partial M)}$ bezeichnet.
\end{satz}

\begin{proof}
Da $M$ als glatte Mannigfaltigkeit trianguliert werden kann, kann auch zu jedem $k-$dimensionalen Simplex $s$ von $M$ ein affiner Isomorphismus $\sigma_s\colon \Delta_k\rightarrow s $ gewählt werden, so dass die Fundamentalklasse $\left[M,\partial M\right]_{\mathbb Z}$ von $(M,\partial M)$ durch den folgenden k-Zykel repräsentiert werden kann:\[
\sum_s{\sigma_s}\in C_k^{sing}(M,\partial M;\mathbb Z)\footnote{Vgl. \cite{1062.55001}, S. 138.},
\] wobei s die $k-$Simplizes von $M$ durchläuft. Man stelle sich die $k$-Simplizes durchnummeriert vor und es sei $n$ ihre Anzahl.\\
Es ist also \[
\widehat{\left[M,\partial M\right]}_{\mathbb Z}=\left[\widehat{\sum_s{\sigma_s}}\right]=\left[\sum\limits_{\genfrac{}{}{0pt}{1}{s<t}{1 \leq s,t \leq n}}{(p_{\tau}^X)_{\sharp}(\sigma_s\times\sigma_t)} \right]\in H^{sing}_{2k}\left(\widetriangle{(M,\partial M)};\mathbb Z\right).
\]Da $k$ gerade ist, gilt $(p_{\tau}^X)_{\sharp}(\sigma_s\times\sigma_t)=(p_{\tau}^X)_{\sharp}(\sigma_t\times\sigma_s)$ für alle $k-$Simplizes $s,t$ von $M$. Aus diesem Grund ist aber $\sum\limits_{\genfrac{}{}{0pt}{1}{s<t}{1 \leq s,t \leq n}}{(p_{\tau}^X)_{\sharp}(\sigma_s\times\sigma_t)}$ die Summe über alle $2k$-Simplizes von $\widetriangle{(M,\partial M)} $ und repräsentiert somit die Fundamentalklasse der Giebelmannigfaltigkeit $\widetriangle{(M,\partial M)} $.\\ Allerdings kann das so nur gelten, wenn die Triangulierungen von $(M,\partial M)$ und $\widetriangle{(M,\partial M)}$, welches im Allgemeinen gar keine Mannigfaltigkeit ist, zusammenpassen. Insbesondere sollte es an den Schnittstellen, an denen die Umgebung der Diagonalen in $M\times M$ herausgeschnitten wurde, eine Triangulierung geben, die mit der von $M$ zusammen passt.\\ Da nun aber die verminderte Giebelmannigfaltigkeit tatsächlich eine glatte Mannigfaltigkeit ist, kann sie in ausreichender Entfernung von der herausgeschnittenen Diagonalen sogar auf die selbe Art wie $M$ trianguliert werden. Da Triangulierungen ausdehnbar sind, folgt das Gewünschte.
\end{proof}

\section{Ausblick}
Eine Homologietheorie, die sehr geometrisch definiert ist, ist die Bordismustheorie. Hier werden für einen topologischen Raum $X$ Abbildungen $f\colon M\rightarrow X$ betrachtet, wobei $M$ eine geschlossene, glatte, n-dimensionale Mannigfaltigkeit ist.\\ 
Solche sogenannten singulären $n$-Mannigfaltigkeiten $(M_1,f_1)$ und $(M_2,f_2)$ heißen bordant, wenn es eine kompakte $(n+1)$-dimensionale Mannigfaltigkeit W, eine Abbildung $F\colon W\rightarrow X$, eine disjunkte Zerlegung des Randes $\partial W=\partial_0 W\amalg \partial_1 W$ und Diffeomorphismen $u_i\colon M_i\rightarrow \partial_i W$ für $i=0,1$ gibt, so dass $F\circ u_i=f_i$ für $i=0,1$ gilt. Sie heißen also bordant, wenn sie gemeinsam den Rand einer höherdimensionalen Mannigfaltigkeit ergeben. "`Bordant"' ist eine Äquivalenzrelation, und es ist möglich, auf den Äquivalenzklassen $N_n(X)$ der Bordismusklassen $(M,f)$ durch die disjunkte Summe die Struktur einer abelschen Gruppe zu definieren.\\  Diese Konstruktion lässt sich auf Raumpaare $(X,A)$ und Abbildungen $(M,\partial M)\rightarrow (X,A)$ erweitern. Insgesamt resultiert eine Homologietheorie, die allerdings das Dimensionsaxiom nicht erfüllt und  die unorientierter Bordismus genannt wird.\\
Es drängt sich die Frage auf, ob eine Dachabbildung auch im Bordismus existieren würde.\\ Da bei der Frage, inwiefern $\widetriangle{M}$ für eine $\partial$-Mannigfaltigkeit $M$ wieder eine Mannigfaltigkeitsstruktur aufweist, vor allem die Diagonale in $M\times M$ eine Schwierigkeit darstellt, wird es auch hier nötig sein, sich besonders um die Diagonale zu kümmern. Außerhalb der Diagonalen entsteht durch die Giebelbildung im Grunde nichts anderes als vorher, denn dort ist $M\times M$ eine zweifache Überlagerung von $\widetriangle{M}$ (ohne das Bild der Diagonalen). Die Diagonale ist aber auf den ersten Blick nicht so einfach in das dadurch entstehende Bild einzufügen.

\addtocontents{toc}{\protect\vspace*{\baselineskip}}

\addcontentsline{toc}{chapter}{Literaturverzeichnis}
\bibliographystyle{hsiam}
\bibliography{bibliography} 

\end{document}